\edef\boxframe@normal@YT{\boxframe@YT}
\newtheorem{theorem}{Theorem}[section]
\newtheorem{lemma}[theorem]{Lemma}
\newtheorem{definition}[theorem]{Definition}
\newtheorem{corollary}[theorem]{Corollary}
\newtheorem{conjecture}[theorem]{Conjecture}
\newtheorem{example}[theorem]{Example}
\definecolor{pBlue}{RGB}{86,139,190}
\definecolor{pCyan}{RGB}{149,186,201}
\definecolor{pSand}{RGB}{184,166,121}
\definecolor{pAlgae}{RGB}{87,115,135}
\definecolor{pSkin}{RGB}{236,216,167}
\definecolor{pGray}{RGB}{156,175,156}
\definecolor{pPink}{RGB}{215,114,127}
\definecolor{pOrange}{RGB}{211,153,80}
\definecolor{pDarkGreen}{RGB}{43,140,51}
\definecolor{boxL}{RGB}{153,211,80}
\definecolor{boxU}{RGB}{80,153,211}
\definecolor{boxS}{RGB}{236,216,167}
\definecolor{boxG}{RGB}{240,240,240}
\definecolor{boxB}{RGB}{0,0,0}
\definecolor{boxGr}{RGB}{205,205,205}
\DeclareMathOperator{\Res}{Res}
\DeclareMathOperator{\arm}{arm}
\DeclareMathOperator{\leg}{leg}
\newcommand{\setZ}{\mathbb{Z}}
\newcommand{\setR}{\mathbb{R}}
\newcommand{\jackj}{J}
\newcommand{\jackjLR}{g}
\newcommand{\JackjLR}{{\bm{g}}}
\newcommand{\bsigma}{\bar{\sigma}}
\newcommand{\bmu}{\bar{\mu}}
\newcommand{\bmD}{{\bm {D}}}
\newcommand{\bmss}{{\bm {s}}}
\newcommand{\mk}{{\cdot}}
\newcommand{\norm}[1]{\lVert#1\rVert} % Norm now takes an argument, \norm{ ... }
\newcommand{\uhc}[2]{{\bm{h}^{\scriptscriptstyle{\mathrm U}}_{#1}(#2)}}
\newcommand{\lhc}[2]{{\bm{h}^{\scriptscriptstyle{\mathrm L}}_{#1}(#2)}}
\newcommand{\ula}[2]{{{\bm{h}}^\bullet_{#1}(#2) }}
\newcommand{\bk}[0]{{\bm{k}}}
\newcommand{\bmk}{{\bm{k}}}
\newcommand{\sU}{{\scriptscriptstyle{\mathrm U}}}
\newcommand{\sL}{{\scriptscriptstyle{\mathrm L}}}
\newcommand{\sA}{{\scriptscriptstyle{\mathrm A}}}
\newcommand{\sB}{{\scriptscriptstyle{\mathrm B}}}
\newcommand{\uh}[2]{{h^{\scriptscriptstyle{\mathrm U}}_{#1}(#2)}}
\newcommand{\ulh}[2]{{h^{\scriptscriptstyle{\mathrm U/L}}_{#1}(#2)}}
\newcommand{\lh}[2]{{h^{\scriptscriptstyle{\mathrm L}}_{#1}(#2)}}
\newcommand{\bmin}[2]{{#1 \mathbin{\curlywedge} #2}}
\newcommand{\bmax}[2]{{#1 \mathbin{\curlyvee} #2}}
\DeclareMathSymbol{\shortminus}{\mathbin}{AMSa}{"39}
\newcommand{\BZ}{\mathbb{Z}}
\newcommand{\StD}{\mathsf{SD}}
\newcommand{\StS}{\mathsf{SS}}
\newcommand{\windfact}{\bm{F}}
\newcommand{\hkl}[2]{{{\lfloor}\!{#1,\!#2} \!{\rfloor}\!}}
\newcommand{\meetjoin}{{\Diamond}}
\title[The Stanley conjecture revisited]{The Stanley conjecture revisited 
%[v2 DRAFT]
     }
\author[R. Mickler]{Ryan Mickler}
\address{Singulariti Research, 55 University Street, Carlton, 3053, Australia}
\email{ry.mickler@gmail.com}
\begin{document}

\begin{abstract}
In the seminal work of Stanley \cite{Stanley:1989}, several conjectures were made on the structure of Littlewood-Richardson coefficients for the multiplication of Jack symmetric functions. Motivated by recent results of Alexandersson and the present author \cite{Alexandersson:2023}, we postulate that a `windowing' property holds for all such Jack L-R coefficients. Furthermore, we propose an extension of the `Factorization' property for Schur L-R due to King-Tollu-Toumazet {\cite{King:2009}} to the Jack case. These properties provide a vast set of relations between the Jack L-R coefficients and allow for their direct computation in a certain large class of cases.  \\
%{ \bf [v2.0 DRAFT - Compiled \today] }
\end{abstract}

\maketitle

%\rfoot{{\bf v2.0 DRAFT \today}}
%\Per{Need to write intro}

\section*{Structure of paper}

In Section \ref{section:prelim}, we begin by providing a review the titular Stanley Conjectures of \cite{Stanley:1989}. Our first novel conjecture \eqref{conj:generalstructure} on the general structure of Jack Littlewood-Richardson (LR) coefficients provides a modest but straightforward generalization of the strong form of Stanley's conjecture \eqref{conj:strongstanley}. We then provide a summary of some relevant known results, and revisit the recently proven case of a rectangular union \cite{Alexandersson:2023}, which was the primary motivation for this present work.

In Section \ref{section:windowing}, we deliver a wide ranging generalization of the Rectangular Union result in one of the central conjectures of this paper: the Jack Windowing Conjecture \eqref{conj:jackwindowing}. We show that this conjecture holds in several known cases of the Stanley Conjecture (where the corresponding Schur LR coefficient is $c=1$), including the Maximal Filling Tableau case (which was proven by Stanley). 
The work of Bravi-Gandini \cite{Bravi:2021} has substantial overlap with the topics of this section, and we use their work to prove a significant case of the Jack Windowing conjecture. We provide further evidence for this conjecture by showing that it produces a specific solution to a 6-parameter family of $c=2$ examples.

In Section \ref{section:J12action}, we show how the Windowing conjecture allows for a complete solution for the multiplication action of $J_{21}$ in the Jack basis. This was determined by Stanley in the special case of $c=1$.

In Section \ref{section:factorization}, we conjecture an extension of the factorization result of King, Tollu, Toumazet {\cite{King:2009}} to the case of Jacks. This allows us to extend our previous solution to cover a large 7-parameter family of $c=2$ cases, which we also conjecture a general solution to.

This paper provides background for the follow up work \cite{Mickler:2025}, in which the specific solution \eqref{eq:generalsolution} given in this paper is explored in greater detail.

%In Section \ref{section:ktransform}, we introduce a transformation that provides insight into the structure of the previous results.

%This paper is a development of an idea that might provide a ...

%The conjectures outlined in this paper, if proven, could provide a pathway to a potential proof of the Stanley Conjectures.

\section{Preliminaries}\label{section:prelim}

\setlength{\epigraphwidth}{0.5\textwidth}
\renewcommand{\epigraphsize}{\tiny}
%\setsize{\epigraphsize}{0.6\textsize}

%\epigraph{My tongue doth itch, my thoughts in labour be: \\
%Listen then, lordings, with good ear to me, \\
%For of my life I must a riddle tell.
%}{\textit{Philip Sidney \\ Astrophel and Stella}}

\ytableausetup{boxsize=0.8em}

\subsection{Definitions} 
We use the same notation and conventions as \cite{Alexandersson:2023}, except where stated otherwise. For a partition $\mu$ of size $n$, let $J_\mu$ denote the associated Jack symmetric function with parameter $\alpha \in \setR$, normalized so that the coefficient of the power sum variable $p_1^{n}$ in $J_\mu$ is $1$. These form a basis of the ring of symmetric functions. In the $\alpha \to 1$ limit, the so called \emph{Schur} limit, we have $J_\mu \to h_\mu\, s_\mu$, where $h_\mu := \prod_{b\in \mu} h_\mu(b)$ is the product of all hook lengths of the boxes in $\mu$, and $s_\mu$ is the Schur symmetric function. The Jack Littlewood-Richardson coefficients $g_{\mu\nu}^{\lambda}$ are defined by the expansion of a product of Jack functions
\begin{equation} J_\mu \cdot J_\nu = \sum_{\lambda} g_{\mu\nu}^{\lambda} J_\lambda. \end{equation}
For example, 
\begin{equation}
\jackjLR_{21,21}^{321} = \frac{6\alpha(2+11\alpha+2\alpha^2)}{(2+\alpha)(1+2\alpha)(3+2\alpha)(2+3\alpha)}.
\end{equation}
The Schur Littlewood-Richardson coefficients are recovered in the limit
\begin{equation}\label{eq:schurLRjack}
c_{\mu\nu}^{\lambda} =  \frac{  h_{\lambda}{} }{h_{\mu}{} h_{\nu}{}  }  \left(  \jackjLR_{\mu\nu}^\lambda    \right)_{\alpha \to 1}.
\end{equation}
We use the following standard notation for the $\alpha$-Hall norm of the Jacks,
\begin{equation}\label{eq:jacknorm}
j_\lambda :=  \norm{\jackj_\lambda}^2 = \prod_{b\in \lambda} \lh{\lambda}{b} \, \uh{\lambda}{b}.
\end{equation}
where $\ulh{\lambda}{b}$ are upper/lower hook lengths, and the equality is given by a theorem of Stanley.

\subsection{The Stanley Conjectures}\label{sect:stanley}

In \cite{Stanley:1989}, several remarkable conjectures were made regarding the 
structure of Jack Littlewood--Richardson coefficients. 

\begin{conjecture}[{The Stanley Conjecture, \cite[8.3]{Stanley:1989}}]\label{conj:stanley} The \emph{\bf Stanley coefficients} $g_{\mu\nu\lambda} := \jackjLR_{\mu\nu}^\lambda \cdot j_\lambda$ are non-negative integer polynomials in $\alpha$, i.e. 
\begin{equation*}
g_{\mu\nu\lambda} \in \setZ_{\geq0}[\alpha].
\end{equation*}
\end{conjecture}
\begin{example}
We have $g_{21,21,321} = 6\alpha^4(2+\alpha)(1+2\alpha)(2+11\alpha+ 2\alpha^2)$.
\end{example}

In general, we use the terminology of a \emph{strong} form of the Stanley conjecture to refer to any conjecture that proposes an explicit form for $\jackjLR_{\mu\nu}^\lambda$ which \emph{manifestly} exhibits the non-negativity of Conjecture~\eqref{conj:stanley}. Stanley conjectured such a form in the case $c_{\mu\nu}^\lambda =1$.

\begin{conjecture}[{Strong Stanley Conjecture, see \cite[8.5]{Stanley:1989}}]\label{conj:strongstanley}
If $c_{\mu\nu}^\lambda =1$, then the corresponding Jack LR coefficient has the form
\begin{equation*}\label{formula:strongstanley}
\jackjLR_{\mu \nu}^{\lambda} = \frac{
\prod_{b \in \mu} {h}^{\bullet}_{\mu}(b) \,\,
\prod_{b \in \nu} {h}^{\bullet}_{\nu}(b) }{
\prod_{b \in \lambda} {h}^{\bullet}_{\lambda}(b) },
\end{equation*}
where $\bullet$ indicates either $\sU$ or $\sL$ chosen for each box $b$ in $\mu,\nu$ and $\lambda$.
Moreover, one chooses $\lh{\,}{b}$ (and hence $\uh{\,}{b}$) the same number of times in the numerator and denominator.
\end{conjecture}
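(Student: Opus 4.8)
One natural line of attack on \eqref{conj:strongstanley} is an induction on $|\nu|$, taking Stanley's Pieri rule for the Jack polynomials as the base case and lifting the formula along a filtration of $\nu$ by means of associativity of Jack multiplication together with the rigidity forced by $c_{\mu\nu}^\lambda = 1$. When $\nu=(r)$ is a single row, Stanley's Jack--Pieri rule \cite{Stanley:1989} expands $\jackj_\mu\jackj_{(r)}$ over the shapes $\lambda$ with $\lambda/\mu$ a horizontal $r$-strip, each coefficient being an explicit product of ratios of upper and lower hooks of $\mu$ and of $\lambda$; one checks by inspection that this product already has the shape asserted in \eqref{conj:strongstanley}, with $\Phi_b = L$ on the boxes of $\mu$ and of $\lambda$ lying immediately below a box of the strip and $\Phi_b = U$ elsewhere, each affected column contributing one upper hook and one lower hook to each of the numerator and the denominator. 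The column case $\nu=(1^r)$ follows by the conjugation $\alpha\leftrightarrow 1/\alpha$, which (via passage to conjugate shapes) interchanges the roles of the upper and lower hooks and hence preserves the class of expressions occurring in \eqref{conj:strongstanley}. Every triple $(\mu,(r),\lambda)$ of this kind has $c=1$, so these are genuine instances of the conjecture and anchor the induction; the rectangular-union case of \cite{Alexandersson:2023} and Stanley's maximal-filling case provide further anchors that the general argument should subsume.

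For the inductive step, fix $(\mu,\nu,\lambda)$ with $c_{\mu\nu}^\lambda = 1$. Uniqueness of the Littlewood--Richardson tableau $T$ of shape $\lambda/\mu$ and content $\nu$ lets one strip the box $b_0$ carrying the last entry of the reading word of $T$, exhibiting $\nu=\rho\cup b_0$ and an intermediate shape $\sigma$ with $c_{\mu\rho}^{\sigma} = c_{\sigma,(1)}^\lambda = 1$. Expanding $\jackj_\mu\jackj_\rho\jackj_{(1)}$ in two ways and reading off the coefficient of $\jackj_\lambda$ writes $\jackjLR_{\mu\nu}^\lambda$ as a sum of products $\jackjLR_{\mu\rho}^{\tau}\,\jackjLR_{\tau,(1)}^\lambda$ over intermediate $\tau$, and the hypothesis $c_{\mu\nu}^\lambda = 1$ together with the support behaviour of Jack coefficients should collapse this to the single term $\jackjLR_{\mu\rho}^{\sigma}\,\jackjLR_{\sigma,(1)}^\lambda$. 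By the inductive hypothesis and the base case each factor has the form \eqref{conj:strongstanley}, and it remains to see that the hook factors indexed by the intermediate shape $\sigma$ cancel in matched pairs, leaving exactly the asserted ratio over $\mu$, $\nu$ and $\lambda$ --- a telescoping identity among the upper and lower hooks of the chain $\mu\subseteq\sigma\subseteq\lambda$, precisely of the kind underlying the windowing relations of Section~\ref{section:windowing}. The assignment $b\mapsto\Phi_b$ is then forced box by box by this reduction, and one must verify that it is independent of which corner is stripped and which LR tableau is used; the rectangular-union computation of \cite{Alexandersson:2023} suggests the intrinsic rule that $\Phi_b=L$ precisely on the boxes of the region cut out inside $\lambda$ by the overlap of suitably superimposed copies of $\mu$ and $\nu$, and the resulting formula is automatically compatible with the $\alpha\to1$ specialization of \eqref{eq:schurLRjack} (which returns $c_{\mu\nu}^\lambda=1$) and, through the $\alpha\leftrightarrow1/\alpha$ symmetry, relates the assignment for a triple to that for its conjugate --- useful cross-checks on any candidate rule.

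The main obstacle lies in the inductive step. It is far from clear that every triple with $c_{\mu\nu}^\lambda = 1$ can be reduced, through a chain of corner-strippings, to Pieri base cases while every intermediate Schur coefficient stays equal to $1$ --- were that so the Strong Stanley Conjecture would presumably already be known --- so on the triples the induction fails to reach one must instead invoke the Window Lensing Conjecture~\eqref{conj:windowlensing} of Section~\ref{section:windowing}, using its relations to reduce such cases to rectangular base shapes whose Jack coefficients are supplied by \cite{Alexandersson:2023}. Even on the triples the induction does reach, carrying out the telescoping cancellation uniformly amounts to exhibiting, for each $c=1$ triple, a bijection between the $L$-boxes (respectively $U$-boxes) of $\lambda$ and those of $\mu\sqcup\nu$ under which the paired hook lengths agree identically in $\alpha$ --- in effect a refined, $\alpha$-deformed counterpart of the identity $|\lambda|=|\mu|+|\nu|$. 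This simultaneously bijective and metric statement is the combinatorial core that has kept \eqref{conj:strongstanley} open, and I expect that establishing it in general will require the fine arm-and-leg bookkeeping behind the factorization results of Bravi--Gandini \cite{Bravi:2021} discussed in Section~\ref{section:factorization}.
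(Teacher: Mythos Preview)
The statement you address is Conjecture~\ref{conj:strongstanley}, which the paper records as an \emph{open conjecture} of Stanley; the paper offers no proof of it, so there is nothing to compare your attempt against. The paper's aim is rather to formulate the Window Lensing Conjecture~\ref{conj:windowlensing} and to verify its compatibility with the already-known $c=1$ cases (Pieri, maximal filling, rectangular union), not to settle~\eqref{conj:strongstanley} itself.

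Your write-up is explicitly a strategy sketch rather than a proof, and you yourself flag its main obstruction: nothing guarantees that stripping corners from $\nu$ keeps $c=1$ at every intermediate stage, so the induction may not even be well-founded. A second gap you pass over too lightly is the ``collapse to a single term''. Expanding $\jackj_\mu\jackj_\rho\jackj_{(1)}$ two ways and comparing coefficients of $\jackj_\lambda$ gives
\[
\sum_\tau \jackjLR_{\mu\rho}^{\tau}\,\jackjLR_{\tau,(1)}^{\lambda}
\;=\;
\sum_{\nu'} \jackjLR_{\rho,(1)}^{\nu'}\,\jackjLR_{\mu\nu'}^{\lambda},
\]
and the right-hand side has one term for each $\nu'$ obtained by adding a box to $\rho$, not just $\nu'=\nu$; so isolating $\jackjLR_{\mu\nu}^{\lambda}$ already presupposes knowledge of the other $\jackjLR_{\mu\nu'}^{\lambda}$, and the left-hand side need not reduce to a single $\tau=\sigma$ even at the Schur level. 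The hypothesis $c_{\mu\nu}^{\lambda}=1$ does not by itself force either of these simplifications. Finally, a minor slip: your description of the Pieri assignment is inverted relative to Theorem~\ref{thm:pieri} --- boxes of $\mu$ and $\lambda$ sharing a column with the horizontal strip receive the \emph{upper} hook, not the lower.
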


Note that the non-negativity follows immediately as each hook factor $\ulh{\,}{b}$ is a non-negative linear polynomial in $\alpha$. Motived by this conjecture, we make the following definition,
\begin{definition}
For any triple of partitions $\mu,\nu, \lambda$, a \emph{\bf{Stanley Diagram}} $\bm{D} \in \StD_{\mu,\nu}^{\lambda}$ is an assignment $b \to \bmD_b \in \{\sU,\sL\}$ for every box $b$ in each of $\mu,\nu$ and $\lambda$.  We draw such assignments as a ratio e.g.

\[ 
\ytableausetup{boxsize=0.8em}
\bm{D} =  \frac{ \begin{ytableau}
*(boxU) \sU \\
*(boxL)  \sL  &  *(boxL) \sL
\end{ytableau}\,\,\begin{ytableau}
*(boxU) \sU  \\
*(boxL)  \sL &  *(boxU) \sU
\end{ytableau} }{ \begin{ytableau}
*(boxU) \sU \\
*(boxL) \sL  & *(boxU) \sU \\
*(boxL) \sL & *(boxU)\sU  & *(boxL) \sL
\end{ytableau} } \in \StD_{21,21}^{321}
\ytableausetup{boxsize=0.8em}
\] 
Note that the hook choices for the boxes in $\lambda$ are in the denominator. 

For a Diagram $\bm{D}$, we define the \emph{\bf{weight}} $|\bm{D}|$ to be the number of of assignments of lower hooks $\sL$ in the numerator minus the number in the denominator, i.e.
\begin{equation} 
|\bm{D}| := \#\{\bm{D}_b = \sL : b \in \mu,\nu \} - \#\{\bm{D}_b = \sL : b \in \lambda \}.
\end{equation}
\end{definition}

%When $c=1$, we refer to the \emph{problem} of $\jackjLR_{\mu \nu}^{\lambda}$ as meaning the task of prescribing hooks for each box in the three partitions.

These diagrams capture the assignment of an upper or lower hook \emph{symbol} to a particular box in a diagram, which is more general than considering only the hook \emph{lengths} corresponding to such an assignment. We now formalize this distinction.

For a box $b$ in the tableau associated to a partition $\lambda$, we will consider the assignment of an upper or lower hook to be denoted\footnote{In \cite{Alexandersson:2023}, this notation is used instead to denote the hook \emph{vector}, a closely related concept.} by the hook \emph{symbols} of either $\uhc{\lambda}{b}$ or $ \lhc{\lambda}{b}$, and the hook \emph{lengths} (i.e. the $\alpha$-content of that hook) to be denoted $\uh{\lambda}{b}$ or $\lh{\lambda}{b}$. Inside a tableau, we denote upper hook symbol by $\begin{ytableau}*(boxU) \sU \end{ytableau}$ and lower hook symbol by $\begin{ytableau}*(boxL) \sL \end{ytableau}$. For example, for the partition $\lambda = 532$ and the box $b=(1,0)$,
\begin{equation}
\begin{ytableau} 
{}& \\
& & \\ 
& b& & & 
\end{ytableau}
\end{equation}
The upper hook \emph{symbol} is
\begin{equation} \uhc{\lambda}{b} =
\begin{ytableau} 
{}& \\
& & \\ 
& *(boxU) \sU & & &
\end{ytableau}
\end{equation}
whereas the upper hook \emph{length} is $\uh{\lambda}{b}= 2+4\alpha$. The $\alpha$-\emph{\bf{evaluation}} map $[ \cdot  ]$, is defined by sending hook symbols to their corresponding hook lengths\footnote{One could instead consider the Macdonald evaluation, e.g.
\[ [\uhc{\lambda}{b}]_{q,t} \coloneqq 1-q^{\arm(b)+1} t^{\leg(b)}. \]}. That is
\[ [  \uhc{\lambda}{b} ] := \uh{\lambda}{b} = \leg(b) + (\arm(b)+1)\alpha.\]

For a hook assignment $\sA \in \{\sU,\sL\}$, we let $\bar \sA$ denote the opposite assignment.
%We use bullet $(\bullet)$ to indicate ambiguity between $\sU$ or $\sL$, e.g. $\bm{h}^\bullet_{\lambda}(b)$

We extend the evaluation map to act on Stanley diagrams so that the the result is a rational function in $\alpha$, with hooks for $\mu,\nu$ in the numerator, and those for $\lambda$ in the denominator.

\begin{conjecture}[{Strong Stanley Conjecture} - rephrased]\label{conj:strongstanley2}
If $c_{\mu\nu}^\lambda =1$, then there exists a Stanley diagram $\bm{D}\in \StD_{\mu\nu}^{\lambda}$ of weight $|\bm{D}| = 0$, such that the corresponding Jack LR coefficient has the form
\begin{equation*}\label{formula:strongstanley2}
\jackjLR_{\mu \nu}^{\lambda} = [\bm{D}].
\end{equation*}
\end{conjecture}

%We define $\bm{\beta} =\begin{ytableau}*(boxU) \sU \end{ytableau}- \begin{ytableau}*(boxL) \sL \end{ytableau}$, which satisfies $[\bm{\beta}]=\beta := \alpha-1.$

\subsection{Structural Conjecture}

In the closing section of \cite{Stanley:1989}, the author contemplates whether an expression of the type of Conjecture \eqref{conj:strongstanley} may also hold for cases where $c_{\mu\nu}^\lambda \geq 1$, where one might instead sum over $c_{\mu\nu}^\lambda \in \BZ_{> 0}$ Stanley diagrams:

\begin{quotation} 
\emph{One possibility is the following: $\jackjLR_{\mu\nu\lambda}$ can be written as a sum of $c_{\mu\nu}^\lambda$ expressions of the form [of the c=1 case], each possibly multiplied by a power of $\alpha$. \,\, {\cite[p114]{Stanley:1989}}}
\end{quotation}

 However, a counter example was presented that halted this idea (see example \ref{example:hanlonpositivity}). In this paper we pursue this train of thought, and we will build evidence for the following straightforward generalization to $c_{\mu\nu}^\lambda \geq 1$.

\begin{definition}
A \emph{{\bf{Stanley Sum}}} $\bmss  = \sum_{\bm{D}\, \in \StD_{\mu\nu}^{\lambda}} c_\bmD\,  \bm{D} \in \StS_{\mu\nu}^{\lambda}$, with $c_\bmD \in \BZ$, is an element in the $\setZ$-span of Stanley Diagrams. The evaluation map $[\cdot]$ extends linearly to such sums. Similarly, we define the \emph{\bf{weight}} of such a sum $\bmss$ to be 
\begin{equation}
|\bmss| := \sum_{\bm{D} \in \StD_{\mu\nu}^{\lambda}} c_\bmD \, |\bm{D}| \,\,\in  \BZ.
\end{equation}
\end{definition}

\begin{conjecture}[General Structure]\label{conj:generalstructure}
For each triple of partitions $\mu,\nu,\lambda$, there exists a Stanley sum, 
%\begin{equation}
%\JackjLR_{\mu\nu}^{\lambda} = \sum_{\Phi \in \StD_{\mu\nu}^{\lambda}} c_\Phi \cdot \frac{
%prod_{b \in \mu} {h}^{\Phi_b}_{\mu}(b) \,\,
%\prod_{b \in \nu} {h}^{\Phi_b}_{\nu}(b) }{
%\prod_{b \in \lambda} {h}^{\Phi_b}_{\lambda}(b) }.
%\end{equation}
\begin{equation}
\JackjLR_{\mu\nu}^{\lambda} = \sum_{\bm{D} \in \StD_{\mu\nu}^{\lambda}} c_\bmD \, \bm{D}, \qquad c_D \in \BZ,
\end{equation}
of weight $|\JackjLR_{\mu\nu}^{\lambda}| = 0$, such that the corresponding Jack LR coefficient is recovered by evaluation.
\begin{equation}
\jackjLR_{\mu\nu}^{\lambda} = [\JackjLR_{\mu\nu}^{\lambda}].
\end{equation}

We call such a Stanley sum $\JackjLR_{\mu\nu}^{\lambda}$ a \emph{\bf{Rule}} for $\jackjLR_{\mu\nu}^{\lambda}$.
\end{conjecture}

Some comments regarding this conjecture.
First, in the Schur limit $\alpha\to1$, we find that $[\bm{D}] \to h_\mu h_\nu / h_\lambda $ for all $\bm{D}$, and $\jackjLR_{\mu\nu}^{\lambda} \to c_{\mu\nu}^{\lambda} \cdot h_\mu h_\nu / h_\lambda $, and so for a Rule we have
\begin{equation}\label{eq:schurreduction}
 \sum_{\bm{D} \in \StD_{\mu\nu}^{\lambda}} c_\bmD = c_{\mu\nu}^{\lambda}.
\end{equation}
Thus, if we expect to see more than the number $c_{\mu\nu}^{\lambda}$ of diagrams contribute to such a Rule, then we must allow for the possibility of some negative coefficients $c_\bmD$.

Secondly, we observe that our Conjecture \ref{conj:generalstructure} is compatible with the Stanley Conjecture \ref{conj:stanley}, in that the poles of $[\JackjLR_{\mu\nu}^{\lambda}]$ as a meromorphic function of $\alpha$ can only be of the form $1/\ulh{\lambda}{b}$, that is, 
\begin{equation}
j_\lambda \cdot [\JackjLR_{\mu\nu}^{\lambda}]  \in \BZ[\alpha].
\end{equation}
However, Conjecture \ref{conj:generalstructure} is much weaker in the sense that it does not suggest any positivity property of the coefficients $c_\bmD$ (indeed, we expect them to \emph{not} be non-negative). We do however make a modest refinement to this positivity aspect later in Conjecture \ref{conj:windowpositivity}.

Lastly, we observe that Conjecture \ref{conj:generalstructure} is not very constraining, as there are such a large number of terms possible in the right hand sum, and we are only asking for an equality of a single rational function, its not surprising if at least one solution exists. However, we will soon introduce two types of compatibility relations between these Rules for different $\lambda,\mu$ and $\nu$ that greatly restrict this abundance of degrees of freedom. But first, we review some known cases of Jack LR coefficients.

%Note, this conjecture is stating that the Jack LR coefficients can be written as a sum of rational functions of hooks in some procedural way.

%\begin{conjecture}[Holy Grail]
%The Jack LR rule $\JackjLR_{\mu\nu}^{\lambda}$ can be expressed as
%\begin{equation}
%\JackjLR_{\mu\nu}^{\lambda} = \sum_{T \in S(\lambda/\mu,\nu)} \JackjLR_T
%\end{equation}
%where $S(\lambda/\mu,\nu)$ is some set of Young Tableau associated to $\lambda, \mu$ and $\nu$, and $\JackjLR_T$ is a rule for the tableau $T$. 
%The rule $\JackjLR_T$ satisfies
%\begin{equation}
%(c_T)_{\alpha\to1} = \begin{cases} 1 & \text{ if $T$ is Littlewood-Richardson} \\ 0 & \text{ otherwise } \end{cases}
%\end{equation}
%\end{conjecture}

%Where appropriate in the remainder of this paper, we move between considering $\JackjLR_{\mu\nu}^{\lambda}$ and its evaluation.

\subsection{Known Cases}

Here we collect some known cases of the Strong Stanley conjecture \eqref{conj:strongstanley}, that is, in all the cases below the corresponding Schur LR coefficient satisfies $c=1$.

\begin{theorem}[{Pieri rule, \cite[6.1]{Stanley:1989}}]\label{thm:pieri}
Let $\lambda/\mu$ be a horizontal $r$-strip, then the Jack LR coefficient is given by the following expression
\begin{equation*}
\jackjLR_{\mu,(r)}^\lambda = \frac{
 \prod_{b \in \mu} h^{A(\lambda/\mu,b)}_\mu(b) \cdot
 \prod_{b \in (r)}  \uh{(r)}{b}  }{
 \prod_{b \in \lambda} h^{A(\lambda/\mu,b)}_\lambda(b)  },
\end{equation*}
%\begin{equation*}
%\jackjLR_{\mu,(r)}^\lambda = \frac{
% \prod_{b \in \mu} A_{\mu,\lambda/\mu}(b) \,\,
% \prod_{b \in (r)}  \uh{(r)}{b}  }{
% \prod_{b \in \lambda} A_{\lambda,\lambda/\mu}(b) },
%\end{equation*}
where
%\begin{equation*}
%A_{\sigma,\lambda/\mu}(b) \coloneqq \begin{cases}
%\lh{\sigma}{b}  & \text{ if $\lambda/\mu$ does not contain a box in the same column as $b$} \\
%\uh{\sigma}{b}  &  \text{ otherwise.}
%\end{cases}
%\end{equation*}
\begin{equation*}
A(\lambda/\mu,b) \coloneqq \begin{cases}
\sU & \text{ if $\lambda/\mu$ contains a box in the same column as $b$} \\
\sL  &  \text{ otherwise.}
\end{cases}
\end{equation*}

\end{theorem}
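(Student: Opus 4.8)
The plan is to reduce the assertion to the Pieri rule for Jack polynomials written in the monic basis $P_\lambda$, and then carry out a purely combinatorial regrouping of hook factors. First I would pass to the $P$-basis: write $J_\sigma = c_\sigma P_\sigma$ with $c_\sigma \coloneqq \prod_{b\in\sigma}\lh{\sigma}{b}$, where $P_\sigma$ is the monic Jack polynomial, and let $\langle\,\cdot\,,\,\cdot\,\rangle$ be the $\alpha$-Hall inner product. By \eqref{eq:jacknorm}, $j_\lambda = \langle J_\lambda, J_\lambda\rangle = c_\lambda\, c'_\lambda$ with $c'_\sigma \coloneqq \prod_{b\in\sigma}\uh{\sigma}{b}$, hence $\langle P_\lambda, P_\lambda\rangle = c'_\lambda/c_\lambda$. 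Writing $g_r \coloneqq Q_{(r)}$ for the one-row element of the basis $(Q_\lambda)$ dual to $(P_\lambda)$, we get $P_{(r)} = (c'_{(r)}/c_{(r)})\, g_r$, and therefore
\begin{equation*}
J_\mu\, J_{(r)} \;=\; c_\mu c_{(r)}\, P_\mu P_{(r)} \;=\; c_\mu\, c'_{(r)}\,\bigl(P_\mu\, g_r\bigr).
\end{equation*}
So it is enough to establish the monic Pieri rule $P_\mu\, g_r = \sum_{\lambda} \varphi_{\lambda/\mu}\, P_\lambda$, the sum over $\lambda$ with $\lambda/\mu$ a horizontal $r$-strip, and to identify $\jackjLR_{\mu,(r)}^{\lambda} = (c_\mu c'_{(r)}/c_\lambda)\,\varphi_{\lambda/\mu}$ with the stated ratio.

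For the monic Pieri rule I would invoke the Pieri formula for Macdonald polynomials in the $P$-normalization and pass to the Jack degeneration (for instance $q = t^{1/\alpha}$, $t\to 1$); under it the local Macdonald factor $\frac{1-q^{\arm(b)}t^{\leg(b)+1}}{1-q^{\arm(b)+1}t^{\leg(b)}}$ of a box $b$ in a diagram $\sigma$ tends to $\lh{\sigma}{b}/\uh{\sigma}{b}$, so that
\begin{equation*}
\varphi_{\lambda/\mu} \;=\; \prod_{b\in C_\lambda}\frac{\lh{\lambda}{b}}{\uh{\lambda}{b}}\cdot\prod_{b\in C_\mu}\frac{\uh{\mu}{b}}{\lh{\mu}{b}},
\end{equation*}
where, for $\sigma\in\{\mu,\lambda\}$, $C_\sigma$ denotes the set of boxes of $\sigma$ lying in a column that contains a box of $\lambda/\mu$. (If one prefers not to quote Macdonald, the monic Jack Pieri rule also follows from the adjointness of multiplication by $g_r$ and the skewing operator $g_r^{\perp}$ with respect to $\langle\,\cdot\,,\,\cdot\,\rangle$, together with the dual Pieri rule and the norm formula, or by induction on $r$ using that $P_\lambda$ is an eigenfunction of the Sekiguchi--Debiard operator.)

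It then remains to regroup. Since $\lambda/\mu$ is a horizontal strip, every column contains at most one of its boxes, so $C_\lambda = C_\mu\sqcup(\lambda/\mu)$. Substituting $c_\mu = \prod_{b\in\mu}\lh{\mu}{b}$ and $c_\lambda = \prod_{b\in\lambda}\lh{\lambda}{b}$ and cancelling the factors indexed by $C_\mu$ and $C_\lambda$,
\begin{equation*}
\jackjLR_{\mu,(r)}^{\lambda} \;=\; c'_{(r)}\cdot\frac{c_\mu}{c_\lambda}\,\varphi_{\lambda/\mu}
\;=\; c'_{(r)}\cdot\frac{\prod_{b\in\mu\setminus C_\mu}\lh{\mu}{b}\;\prod_{b\in C_\mu}\uh{\mu}{b}}{\prod_{b\in\lambda\setminus C_\lambda}\lh{\lambda}{b}\;\prod_{b\in C_\lambda}\uh{\lambda}{b}}
\;=\; \frac{\prod_{b\in\mu}A_{\mu,\lambda/\mu}(b)\;\prod_{b\in(r)}\uh{(r)}{b}}{\prod_{b\in\lambda}A_{\lambda,\lambda/\mu}(b)},
\end{equation*}
using $c'_{(r)} = \prod_{b\in(r)}\uh{(r)}{b}$ and the defining property of $A_{\sigma,\lambda/\mu}$: it returns $\uh{\sigma}{b}$ exactly when the column of $b$ meets $\lambda/\mu$ (that is, $b\in C_\sigma$) and $\lh{\sigma}{b}$ otherwise. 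This is the asserted formula.

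Modulo the monic Pieri rule the whole argument is bookkeeping, so the real obstacle is the second step if one wants a self-contained proof: one must verify the Jack specialization of the Macdonald factors and, above all, get the bottom-of-column conventions right, since these are precisely what separate the $P$-Pieri factor $\varphi$ from the $Q$-Pieri factor $\psi$. Concretely, the boxes of $\lambda/\mu$ sit at the foot of their columns, so they enter $\varphi$ only through the $\lambda$-factors $\lh{\lambda}{b}/\uh{\lambda}{b}$ (with $\leg(b)=0$) and have no matching $\mu$-factor; combined with the corresponding $\lh{\lambda}{b}$ in $c_\lambda$, these are exactly what implement the switch $\lh{\lambda}{b}\mapsto\uh{\lambda}{b}$ built into $A_{\lambda,\lambda/\mu}$ on the new boxes. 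Keeping track of this is the one point that needs genuine care.
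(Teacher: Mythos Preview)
The paper does not give its own proof of this statement: Theorem~\ref{thm:pieri} is stated as a known result with a citation to Stanley~\cite[6.1]{Stanley:1989}, and no argument is supplied. So there is no ``paper's proof'' to compare against.

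Your derivation is correct. Passing to the monic basis $P_\lambda$ via $J_\sigma=c_\sigma P_\sigma$, invoking the Pieri rule $P_\mu g_r=\sum_\lambda \varphi_{\lambda/\mu}P_\lambda$ (obtained either as the Jack degeneration of the Macdonald Pieri rule or directly from Stanley's paper), and then regrouping the hook factors is exactly the standard way to reach the $J$-normalized formula. The key bookkeeping identity $C_\lambda=C_\mu\sqcup(\lambda/\mu)$ for a horizontal strip is what makes the cancellation with $c_\mu/c_\lambda$ land precisely on the $A_{\sigma,\lambda/\mu}$ assignments, and you handle this correctly, including the boxes of $\lambda/\mu$ themselves. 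One small caveat: the Jack limit convention you quote, $q=t^{1/\alpha}$, is the reciprocal of the one compatible with the paper's hook conventions $\uh{\sigma}{b}=\leg(b)+(\arm(b)+1)\alpha$; with those conventions the correct specialization is $q=t^{\alpha}$, $t\to 1$. This does not affect your final formula for $\varphi_{\lambda/\mu}$, which is stated correctly.
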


\begin{example}\label{ex:pieri} 
\begin{equation*}
\JackjLR_{75411,3}^{76431} =
\ytableausetup{boxsize=0.8em}
\frac{
\begin{ytableau}
*(boxL) \sL \\
*(boxL) \sL \\
*(boxL) \sL & *(boxU) \sU  & *(boxU) \sU  & *(boxL) \sL \\
*(boxL) \sL & *(boxU) \sU  & *(boxU) \sU  & *(boxL) \sL & *(boxL) \sL \\
*(boxL) \sL & *(boxU) \sU  & *(boxU) \sU  & *(boxL) \sL & *(boxL) \sL  & *(boxU) \sU & *(boxL) \sL
\end{ytableau}
\quad
\begin{ytableau}
*(boxU) \sU & *(boxU) \sU & *(boxU) \sU
\end{ytableau}}{
\begin{ytableau}
*(boxL) \sL \\
*(boxL) \sL & *(boxU) \sU  & *(boxU) \sU \\
*(boxL) \sL & *(boxU) \sU  & *(boxU) \sU  & *(boxL) \sL \\
*(boxL) \sL & *(boxU) \sU  & *(boxU) \sU  & *(boxL) \sL &*(boxL) \sL  & *(boxU) \sU \\
*(boxL) \sL & *(boxU) \sU  & *(boxU) \sU  & *(boxL) \sL &*(boxL) \sL  & *(boxU) \sU & *(boxL) \sL
\end{ytableau}} \\
\end{equation*}
\end{example}

%\begin{theorem}[{Dual Pieri rule, \cite[6.3]{Knop:1996}}]
%(n.b. not sure what is new here, as one can just transpose all partitions and flip all hooks in above diagram.)
%\end{theorem}

Note however, there is an ambiguity in the Pieri rule.
\begin{example}We consider two distinct Stanley diagrams that evaluate to $\jackjLR_{1,2}^{21}$, both examples of the Pieri rule
\begin{equation} \JackjLR_{2,1}^{21} = 
 \frac{ \begin{ytableau}
*(boxU) &  *(boxL) \\
\end{ytableau} \,\, \begin{ytableau}
 *(boxU)
\end{ytableau} } { \begin{ytableau}
 *(boxU) \\
*(boxU)  &  *(boxL)
\end{ytableau} },\quad
\JackjLR_{1,2}^{21} = 
 \frac{  \begin{ytableau}
  *(boxU)
\end{ytableau}\,\,\begin{ytableau}
*(boxU) &  *(boxU) \\
\end{ytableau}  } { \begin{ytableau}
 *(boxU) \\
 *(boxU)  &  *(boxU)
\end{ytableau} }.
\end{equation}

\end{example}

Stanley also generalized the above Pieri rule to the case of Maximal Filling tableaus.

\begin{theorem}[Maximal Filling Case {\cite[8.6]{Stanley:1989}}]
Suppose that $c_{\mu\nu}^{\lambda} =1$, and the unique $LR$-tableau $T$ of shape $\lambda/\mu$ with weight $\nu$ has the \emph{\bf{maximal filling}} property, that is, that every column $C$ of $T$ consists of the sequential integers $1,2, \ldots, n_C$.
%For $\mu \subset \lambda$ be such that the shape of the maximal filling tableau $T$ of $\lambda/\mu$ is $\nu$. 
%\[  \frac{ \prod_{ \stackrel{(i,j) =b \in \mu} { r_{i+c_j>c_j} } }\lh{\mu}{b} }{}  \]
Let $r_i$ be defined as the largest entry of $T$ in row $i$ of $\lambda/\mu$, and let $c_j$ be the largest entry of $T$ in column $j$ of $\lambda/\mu$ (or $0$). For $b=(j,i)$ define
%\begin{equation}\label{eq:maxfillingrule}
%C_{\sigma,T}(b)  \coloneqq
%\begin{cases}
%\uh{\sigma}{b} & \text{ if $r_i\leq c_j$ and $c_j>0$} \\
%\lh{\sigma}{b} &  \text{ otherwise},
%\end{cases}
%\end{equation}
\begin{equation}\label{eq:maxfillingrule}
A(T,b)  \coloneqq
\begin{cases}
\sU & \text{ if $r_i\leq c_j$ and $c_j>0$} \\
\sL &  \text{ otherwise},
\end{cases}
\end{equation}
%and 
%\begin{equation}\label{eq:maxfillingrule2}
%C^*_{\sigma,T}(b)  \coloneqq
%\begin{cases}
%\uh{\sigma}{b} & \text{ if $r_{i+c_j}\leq c_j$ and $c_j>0$} \\
%\lh{\sigma}{b} &  \text{ otherwise}.
%\end{cases}
%\end{equation}
For $b=(j,i) \in \mu$, let $b_* \coloneqq (j,i+c_j) \in \lambda$. The Jack LR coefficient is given by

\begin{equation}\label{eq:maxfilling}
%g_{\mu\nu}^{\lambda} = \frac{ \prod_{b \in \mu} C^*_{\mu,T}(b) \cdot \prod_{b\in\nu} \uh{\nu}{b} }{\prod_{b \in \lambda} C_{\lambda,T}( b ) }.
g_{\mu\nu}^{\lambda} = \frac{ \prod_{b \in \mu} h^{A(T,b_*)}_{\mu}(b) \cdot \prod_{b\in\nu} \uh{\nu}{b} }{\prod_{b \in \lambda} h^{A(T,b)}_{\lambda}(b) }.
\end{equation}
\end{theorem}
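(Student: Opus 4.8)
The plan is to induct on $\ell := \ell(\nu)$, the base case $\ell=1$ being precisely the Pieri rule (Theorem~\ref{thm:pieri}). For $\ell\ge 2$, I would \emph{peel off the top layer} of $T$: let $\sigma$ be the diagram obtained from $\lambda$ by deleting the boxes of $T$ labelled $\ell$. Since no entry of $T$ exceeds $\ell$, each such box is the topmost box of its column, so $\lambda/\sigma$ is a horizontal $\nu_\ell$-strip, $\sigma$ is a genuine partition, and $T$ restricted to $\sigma/\mu$ is the unique maximal-filling $LR$-tableau of shape $\sigma/\mu$ with weight $\bar\nu := (\nu_1,\dots,\nu_{\ell-1})$ (column $j$ of $\sigma/\mu$ now reads $1,2,\dots,\min(c_j,\ell-1)$). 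We will see in the next step that $c_{\mu\bar\nu}^{\sigma}=1$, so the inductive hypothesis will apply to $g_{\mu\bar\nu}^{\sigma}$.

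The crux is to reduce $g_{\mu\nu}^{\lambda}$ to a product of this smaller coefficient with two Pieri coefficients. I would compute the coefficient of $J_\lambda$ in $J_\mu\,J_{\bar\nu}\,J_{(\nu_\ell)}$ in two ways. Applying Pieri to $J_{\bar\nu}J_{(\nu_\ell)}$ gives $J_{\bar\nu}J_{(\nu_\ell)} = q\,J_\nu + \sum_{\tau\rhd\nu}(\cdots)\,J_\tau$ with $q := g_{\bar\nu,(\nu_\ell)}^{\nu}\neq 0$; applying Pieri instead to each $J_\pi J_{(\nu_\ell)}$ in $J_\mu J_{\bar\nu}=\sum_\pi g_{\mu\bar\nu}^{\pi}J_\pi$ restricts the surviving $\pi$ to partitions with $\lambda/\pi$ a horizontal $\nu_\ell$-strip. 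Comparing the two expansions,
\[
q\,g_{\mu\nu}^{\lambda} \;+\; \sum_{\tau\rhd\nu}(\cdots)\,g_{\mu\tau}^{\lambda} \;=\; \sum_{\pi}\, g_{\mu\bar\nu}^{\pi}\,g_{\pi,(\nu_\ell)}^{\lambda}.
\]
Now I would use the maximal-filling hypothesis twice. First, in any semistandard filling of $\lambda/\mu$ the number of boxes in column $j$ carrying a label $\le k$ is at most $\min(k,c_j)$, with equality for $T$; hence every weight of such a filling is $\trianglelefteq\nu$, so $c_{\mu\tau}^{\lambda}=0$ for all $\tau\rhd\nu$, and by the same maximality $\langle s_\mu s_{\bar\nu}s_{(\nu_\ell)},s_\lambda\rangle = c_{\mu\nu}^{\lambda}=1$. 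Combined with $\langle s_\mu s_{\bar\nu}s_{(\nu_\ell)},s_\lambda\rangle = \sum_\pi c_{\mu\bar\nu}^{\pi}c_{\pi,(\nu_\ell)}^{\lambda}$, exactly one $\pi$ contributes to the right-hand side, namely $\pi=\sigma$, with $c_{\mu\bar\nu}^{\sigma}=c_{\sigma,(\nu_\ell)}^{\lambda}=1$. Invoking the (unconditional) coincidence of the supports of the Jack and Schur $LR$ coefficients, all cross-terms drop out and we obtain the \emph{multiplicativity}
\[
g_{\mu\nu}^{\lambda} \;=\; \frac{g_{\mu\bar\nu}^{\sigma}\;\, g_{\sigma,(\nu_\ell)}^{\lambda}}{g_{\bar\nu,(\nu_\ell)}^{\nu}}.
\]

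It then remains to substitute the inductive formula \eqref{eq:maxfilling} for $g_{\mu\bar\nu}^{\sigma}$ and the Pieri rule for the two remaining factors, and to simplify into \eqref{eq:maxfilling}. I would do the bookkeeping column by column: the single-row $(\nu_\ell)$-hooks cancel between the two Pieri factors; the boxes of $\lambda/\sigma$ supply the $\uh{\lambda}{\cdot}$ factors (they carry the label $\ell=c_j\ge r_i$, so rule \eqref{eq:maxfillingrule} selects an upper hook); and for a box $b=(j,i)$ of $\sigma$ the $\sigma$-hooks coming from the inductive denominator and from the Pieri numerator combine with the $\lambda$-hooks so that only $\mu$-, $\nu$-, and $\lambda$-hooks survive, with $U/L$ types dictated by whether column $j$ acquired a label-$\ell$ box. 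Tracking how the reference boxes accumulate this column growth across the induction is exactly what produces the shift $b\mapsto b_*=(j,i+c_j)$ in the statement and converts the inductive $C_{\sigma,\,\cdot}$ into $C_{\lambda,T}$ (resp.\ into $C_{\mu,T}$ at the shifted box), so that the right-hand side collapses to \eqref{eq:maxfilling}.

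The main obstacle is the vanishing of the cross-terms in the second step. This is where the maximal-filling hypothesis is genuinely needed: it is precisely the assertion that $\nu$ is the dominance-maximal weight of a filling of $\lambda/\mu$, which simultaneously kills the $\tau\rhd\nu$ terms and all but one $\pi$. It also relies on the fact that a Jack $LR$ coefficient vanishes exactly when the corresponding Schur one does — without this the argument would only control the specialisation at $\alpha=1$. The remaining simplification is elementary but fiddly, and is cleanest when organised as the per-column telescoping indicated above.
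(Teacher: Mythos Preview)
The paper does not actually contain a proof of this theorem: it is stated as a known result of Stanley, with citation \cite[8.6]{Stanley:1989}, and is immediately followed by an example rather than a proof environment. So there is no ``paper's own proof'' to compare against.

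That said, your proposal is essentially Stanley's original argument: induct on $\ell(\nu)$, peel off the boxes labelled $\ell$ to pass from $\lambda$ to $\sigma$, and use associativity of $J_\mu J_{\bar\nu} J_{(\nu_\ell)}$ together with the Pieri rule to obtain the multiplicative identity $g_{\mu\nu}^{\lambda} = g_{\mu\bar\nu}^{\sigma}\,g_{\sigma,(\nu_\ell)}^{\lambda}\big/g_{\bar\nu,(\nu_\ell)}^{\nu}$. Your treatment of the cross-terms is the right one: dominance-maximality of $\nu$ among weights of fillings of $\lambda/\mu$ kills the $\tau\rhd\nu$ terms, and the same maximality forces the unique intermediate $\pi=\sigma$ on the Schur side. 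The one external input you invoke --- that $c_{\mu\tau}^{\lambda}=0$ forces $g_{\mu\tau}^{\lambda}=0$ --- is indeed a theorem of Stanley (the supports coincide), so it is legitimate to use, though you should cite it rather than call it ``unconditional''. The final telescoping of hook products is genuinely the tedious part; your column-by-column outline is the right organisation, and the emergence of the shift $b\mapsto b_*$ is exactly as you describe.
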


I.e. the diagram for which $\bmD= \JackjLR_{\mu\nu}^{\lambda}$ has the property that for each $b \in \mu$, corresponding to $b_* \in \lambda$, we have $\bmD_{b} = \bmD_{b_*}$.

\begin{example}[{\cite[p113-114]{Stanley:1989}}]\label{ex:stanleytableau}
Take $\lambda = 766654211$, $\mu = 7553322$, $\nu=542$. The unique $LR$ tableau $T$ of shape $\lambda/\mu$ and weight $\nu$ is given by 
\[ \ytableausetup{boxsize=0.8em}
T = \begin{ytableau}
2  \\
1 \\
 &   \\
 &  & 1 & 3 \\
 &  &   & 2 & 2 \\
 &   &  & 1&1 & 3 \\
 &   &   & &&2\\
 &    &    &   &   & 1 \\
  &  &  &  &   &  &  \\
\end{ytableau}
\]
We see that $T$ has the maximal filling property, with vectors $c = \{2,0,1,3,2,3,0\}$, and $r = \{0,1,2,3,2,3,0,1,2\}$. Using the formula above, we find the following expression for the rule

\begin{equation*}
\JackjLR_{\mu,\nu}^{\lambda} =
\ytableausetup{boxsize=0.8em}
\frac{
\begin{ytableau}
*(boxU) & *(boxL)   \\
*(boxU) & *(boxL)   \\
*(boxU) & *(boxL)  & *(boxL)  \\
*(boxL) & *(boxL)  & *(boxL)   \\
*(boxU) & *(boxL)  & *(boxL)  & *(boxU) & *(boxU)  \\
*(boxL) & *(boxL)  & *(boxL)  & *(boxU) & *(boxL)  \\
*(boxU) & *(boxL)  & *(boxU)  & *(boxU) & *(boxU) & *(boxU) & *(boxL) \\
\end{ytableau} \quad
\begin{ytableau}
*(boxU)  &*(boxU)   \\
*(boxU)  &*(boxU)  &*(boxU)  &*(boxU)     \\
*(boxU)  &*(boxU)  &*(boxU)  &*(boxU)  &*(boxU)  
\end{ytableau}}{
\begin{ytableau}
*(boxU)  \\
*(boxU)  \\
*(boxU) & *(boxL)   \\
*(boxL) & *(boxL)  & *(boxL)  & *(boxU) \\
*(boxU) & *(boxL)  & *(boxL)  & *(boxU) & *(boxU)\\
*(boxL) & *(boxL)  & *(boxL)  & *(boxU) & *(boxL) & *(boxU) \\
*(boxU) & *(boxL)  & *(boxL)  & *(boxU) & *(boxU) & *(boxU) \\
*(boxU) & *(boxL)  & *(boxU)  & *(boxU) & *(boxU) & *(boxU) \\
*(boxU) & *(boxL)  & *(boxU)  & *(boxU) & *(boxU) & *(boxU) & *(boxL)\\
\end{ytableau}}  \\
\end{equation*}
\end{example}

\begin{theorem}[Rectangular case, see {\cite[4.7]{CaiJing:2013}}]\label{thm:stanleyrectangular}
Let $\mu \subseteq m^n$, and $\bar\mu$ be the partition that is the reverse\footnote{I.e. rotate the shape 180 degrees.} of the shape $m^n/\mu$. Then the strong Stanley conjecture \eqref{conj:strongstanley} holds for $\jackjLR_{\mu,\bmu}^{m^n}$, in particular
\begin{equation}\label{eq:stanleyrectangular}
\jackjLR_{\mu,\bmu}^{m^n}  = \frac{
\prod_{b \in \mu}  \lh{\mu}{b}  \,\,
 \prod_{b \in \bmu}  \uh{\bmu}{b} }{
 \prod_{b \in m^n}  h^{D(\mu,m^n,b)}_{m^n}(b) },
%  D'_{\mu,m^n}(b) },
\end{equation}
where, for $b=(b_1,b_2)\in m^n$, we define
\begin{equation}\label{defn:rectD}
D(\mu,m^n,b) \coloneqq
\begin{cases}
\sL & \text{ if $(b_1,n-1-b_2) \in \mu$} \\
\sU &  \text{ otherwise}.
\end{cases}
\end{equation}
%\begin{equation}\label{defn:rectD}
%\defin{ D'_{\mu,m^n}(b)} \coloneqq
%\begin{cases}
%\lh{m^n}{b} & \text{ if $(b_1,n-1-b_2) \in \mu$} \\
%\uh{m^n}{b} &  \text{ otherwise}.
%\end{cases}
%\end{equation}
\end{theorem}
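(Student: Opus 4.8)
The plan is to pass to $n$ variables — where $J_{m^n}^{(n)}$ is just a scalar multiple of $e_n^{m}$, $e_n = x_1 \cdots x_n$ — and read off the coefficient $\jackjLR_{\mu,\bmu}^{m^n}$ there (it is independent of the number of variables once this is at least $n$). Pad $\mu$ to $n$ parts, so $\bmu = (m-\mu_n,\dots,m-\mu_1)$. The starting point is the inversion symmetry of Jack polynomials. Working with the Jack polynomials $P^{(n)}_\rho$ attached to weakly decreasing integer weights $\rho$ — normalised to be monomial-triangular and orthogonal for the torus inner product $\langle f,g\rangle_n' := \frac{1}{n!}\,\mathrm{CT}\big[\,f\,\overline{g}\,|\Delta|^{2/\alpha}\,\big]$ — one has $P^{(n)}_\rho(x_1^{-1},\dots,x_n^{-1}) = P^{(n)}_{(-\rho_n,\dots,-\rho_1)}(x)$ and $P^{(n)}_{\rho+k^n} = e_n^{k}\,P^{(n)}_\rho$; combined, these give
\[ P^{(n)}_{\bmu}(x) \;=\; e_n^{\,m}\;P^{(n)}_\mu(x^{-1}). \]

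Since $\langle\,,\,\rangle_n'$ also diagonalises the $J^{(n)}_\lambda$, writing $c_\sigma := \prod_{b\in\sigma}\lh{\sigma}{b}$ for the scalar with $J_\sigma = c_\sigma P_\sigma$ and $J^{(n)}_{m^n} = c_{m^n} e_n^{m}$, pairing $J^{(n)}_\mu J^{(n)}_{\bmu}$ against $J^{(n)}_{m^n}$ yields $\jackjLR_{\mu,\bmu}^{m^n} = \tfrac{c_\mu c_{\bmu}}{c_{m^n}}\,\big[P^{(n)}_{m^n}\big]\big(P^{(n)}_\mu P^{(n)}_{\bmu}\big)$. Now $P^{(n)}_\mu P^{(n)}_{\bmu} = e_n^{m}\,P^{(n)}_\mu(x)\,P^{(n)}_\mu(x^{-1})$, so by $P^{(n)}_{\rho+m^n} = e_n^{m} P^{(n)}_\rho$ the coefficient of $P^{(n)}_{m^n}$ is the coefficient of $1 = P^{(n)}_{0^n}$ in $P^{(n)}_\mu(x)\,P^{(n)}_\mu(x^{-1})$; by orthogonality and $\overline{P^{(n)}_\mu(x)} = P^{(n)}_\mu(x^{-1})$ on the torus, this is the normalised square norm $\langle P^{(n)}_\mu, P^{(n)}_\mu\rangle_n'/\langle 1,1\rangle_n'$, for which Kadell and Kaneko's product formula (see also Macdonald's book) reads, in the present conventions,
\[ \frac{\langle P^{(n)}_\mu, P^{(n)}_\mu\rangle_n'}{\langle 1,1\rangle_n'} \;=\; \prod_{b\in\mu}\frac{\uh{\mu}{b}}{\lh{\mu}{b}}\;\prod_{(j,i)\in\mu}\frac{(n-i)+j\alpha}{(n-1-i)+(j+1)\alpha}. \]

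It remains to substitute this formula, together with the explicit scalars $c_\mu, c_{\bmu}, c_{m^n}$, into $\jackjLR_{\mu,\bmu}^{m^n} = \tfrac{c_\mu c_{\bmu}}{c_{m^n}}\big[P^{(n)}_{m^n}\big]\big(P^{(n)}_\mu P^{(n)}_{\bmu}\big)$ and to check that it collapses to the right-hand side of \eqref{eq:stanleyrectangular}. This is a routine but somewhat intricate manipulation of products of factors linear in $\alpha$, carried out by matching factors through the two natural bijections between the diagrams in play — the $180^{\circ}$ rotation of $m^n$ taking $m^n/\mu$ onto $\bmu$, and the horizontal reflection of $\mu$ within the strip of $m$ columns — the case split in the definition \eqref{defn:rectD} of $D'_{\mu,m^n}$ corresponding precisely to whether the $180^{\circ}$-image of a box of $m^n$ lies in $\mu$. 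The balance condition of \eqref{conj:strongstanley} is then immediate by counting: that same involution $b\mapsto(b_1,n-1-b_2)$ matches the boxes of $m^n$ contributing a lower hook to the denominator with $\mu$, and those contributing an upper hook with $m^n/\mu$, balancing the $|\mu|$ lower and $|\bmu|$ upper hooks in the numerator; and $c_{\mu\bmu}^{m^n}=1$ (needed for the `strong' form) is classical, and in any case is forced by \eqref{eq:schurLRjack} after specialising the resulting formula at $\alpha\to1$. I expect this final reconciliation of hook products across $\mu$, $\bmu$ and $m^n$ to be the only genuine obstacle, the inversion identity and the orthogonality arguments being soft. As an alternative that avoids the finite-$n$ norm formula, one may instead observe that the unique Littlewood--Richardson filling of $m^n/\mu$ of content $\bmu$ fills each column by $1,2,3,\dots$ from the top and so has the maximal-filling property; the result then follows from the Maximal Filling Case \cite[8.6]{Stanley:1989}, still modulo reconciling the two explicit formulas.
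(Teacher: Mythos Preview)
The paper does not actually supply its own proof of this theorem: it is quoted in Section~2.4 under ``Known Cases'' with a citation to \cite{CaiJing:2013}, followed only by an illustrative example. So there is no in-paper argument to compare against, and your task reduces to giving an independent proof.

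Your main route---restricting to $n$ variables, invoking the inversion/shift identities $P^{(n)}_{\bmu}=e_n^{m}\,P^{(n)}_\mu(x^{-1})$ and $P^{(n)}_{\rho+m^n}=e_n^{m}P^{(n)}_\rho$, and then reading off the $P^{(n)}_{m^n}$-coefficient of $P^{(n)}_\mu P^{(n)}_{\bmu}$ as the normalised torus norm of $P^{(n)}_\mu$---is sound and is essentially the approach of Cai--Jing. The one genuine gap is that you stop at ``routine but somewhat intricate'': the reconciliation of the Kadell--Kaneko norm product with the specific hook assignment $D'_{\mu,m^n}$ in \eqref{defn:rectD} is the entire content of the statement as written, and you have not actually exhibited the bijection that matches, box by box, the factors $(n-i)+j\alpha$ and $(n-1-i)+(j+1)\alpha$ with the $\lh{m^n}{b}$ and $\uh{m^n}{b}$ prescribed by the case split. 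This is not hard, but it is where any error would hide, so it should be written out.

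Your alternative---observing that the unique LR tableau of shape $m^n/\mu$ and content $\bmu$ is maximal filling and then invoking Stanley's Theorem~8.6---is cleaner and stays within results already recalled in the paper. It is worth noting that this still leaves the same kind of bookkeeping: you must check that Stanley's rule \eqref{eq:maxfillingrule}, with its $r_i\le c_j$ condition and the $b\mapsto b_*$ shift in the numerator, specialises in the rectangular case to exactly the assignment \eqref{defn:rectD} and to all-$L$ (resp.\ all-$U$) on $\mu$ (resp.\ $\bmu$). Either way, the missing piece is the explicit matching of hook products, not the strategy.
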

\begin{example}

\begin{equation*}
\ytableausetup{boxsize=0.8em}
\JackjLR_{211,221}^{333} = \frac{
\begin{ytableau}
*(boxL) \sL   \\
*(boxL) \sL   \\
*(boxL) \sL & *(boxL) \sL
\end{ytableau}\quad
\begin{ytableau}
*(boxU) \sU  \\
*(boxU) \sU & *(boxU) \sU  \\
*(boxU) \sU & *(boxU) \sU
\end{ytableau}}{\begin{ytableau}
*(boxL) \sL & *(boxL) \sL & *(boxU) \sU \\
*(boxL) \sL & *(boxU) \sU & *(boxU) \sU  \\
*(boxL) \sL & *(boxU) \sU & *(boxU) \sU
\end{ytableau}}, \qquad 
\JackjLR_{221,211}^{333} = \frac{
\begin{ytableau}
*(boxL) \sL  \\
*(boxL) \sL & *(boxL) \sL  \\
*(boxL) \sL & *(boxL) \sL
\end{ytableau}\quad\begin{ytableau}
*(boxU) \sU   \\
*(boxU) \sU   \\
*(boxU) \sU & *(boxU) \sU
\end{ytableau}}{\begin{ytableau}
*(boxL) \sL & *(boxL) \sL & *(boxU) \sU \\
*(boxL) \sL & *(boxL) \sL & *(boxU) \sU  \\
*(boxL) \sL & *(boxU) \sU & *(boxU) \sU
\end{ytableau}}.
\end{equation*}

\end{example}

Note: Corresponding to the above, the Jack LR cofficients $\jackjLR_{\mu,\bmu}^{m^n}=\jackjLR_{\bmu,\mu}^{m^n}$ are symmetric in the lower arguments (although this is not manifest\footnote{For a demonstration of this property in this case see \cite{Alexandersson:2023}.}), however the corresponding \emph{rules} $\JackjLR_{\mu,\bmu}^{m^n} \neq \JackjLR_{\bmu,\mu}^{m^n}$ are not (i.e. after exchanging the top two factors).

%\begin{conjecture}
%\[ \JackjLR_{\mu,\nu}^{\lambda} = \bar \JackjLR_{\nu',\mu'}^{\lambda'} \]
%\end{conjecture}

The next example was the motivation for the present work. Wherein the surprising simplicity of the answer, and the compatibility with the Rectangular case \eqref{thm:stanleyrectangular}, merited further investigation.

\begin{theorem}[Rectangular union case, \cite{Alexandersson:2023} Corollary 3.6 ]\label{thm:rectunion}
Let $m^n$ be a rectangular partition, and $\mu$ be any partition (i.e. not necessarily $m^n \subset \mu$). Let $\sigma \coloneqq m^n \cap \mu$, and $\bsigma$ be the partition given by the reverse of $m^n / \sigma$. Then, the Strong Stanley Conjecture~\eqref{conj:strongstanley} holds for $\jackjLR_{\mu,\bsigma}^{\mu\cup m^n}$.  

In particular, let $R \subset m^n$ be the smallest rectangular shape such that $\mu$ and $\mu\cup m^n$ agree outside $R$. Say $R$ is of shape $k^\ell$. Let $\mu_R$ be the partition giving by shifting the shape $\mu \cap R$ back to the origin, and $\bar \mu_R$ be the reverse of $k^\ell / \mu_R$. We then have the following decomposition for the LR coefficient
\begin{equation}\label{eq:rectunion}
\jackjLR_{\mu,\bsigma}^{\mu\cup m^n} = \raisebox{-0.5\height}{\includegraphics[scale=0.5,page=1]{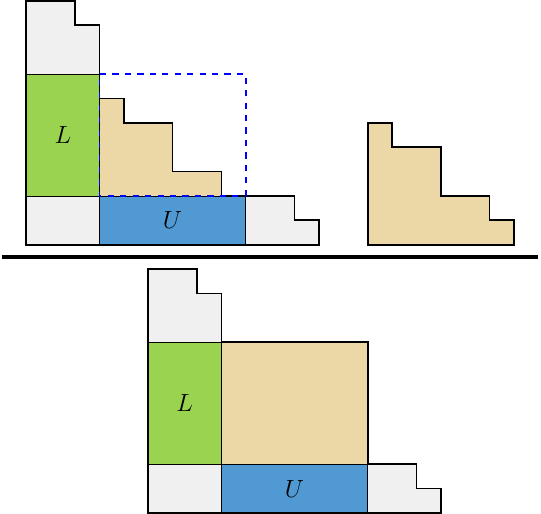}} \times  \jackjLR_{\mu_R, \bar \mu_R}^{k^\ell}
\end{equation}
where the left factor (henceforth denoted $F_R$)  is a products of $L$-hooks for every box to the left of $R$ (denoted by dashed line), 
and $U$-hooks for every box below it, for both $\mu$ in the numerator 
and $\mu \cup m^n$ in the denominator. For boxes sharing neither a row or column with a box in $R$, we can assign \emph{either} upper or lower hooks as these cancel between the numerator and denominator, and so we indicate this ambiguity with light grey \begin{ytableau}*(boxG)\end{ytableau}. The second factor $\jackjLR_{\mu_R, \bar \mu_R}^{k^\ell}$ is covered by the rectangular case above.
%\item The notation $(\JackjLR)_R$ denotes taking hook choices for the boxes inside $R \cap\mu$, $R \cap\lambda$ and $\nu$ to agree with those choices for the rule $\JackjLR$ inside the brackets.
%\item For $\JackjLR_{\mu_R, \bar \mu_R}^{k^\ell}$ we assign hooks as in the Rectangular case \eqref{thm:stanleyrectangular}.

%See Figure~\eqref{fig:rectunionLR}.
%\begin{figure}[!htb]
%\centering
%\caption{
%The blue sections are not assigned any hooks, and for the beige regions 
%we can choose one of the two solutions to the rectangular case for that subregion.}
%\label{fig:rectunionLR}
%\end{figure}
\end{theorem}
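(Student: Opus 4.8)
\medskip
\noindent\emph{Proof proposal.}
The plan is to reduce \eqref{eq:rectunion} to the Rectangular case, Theorem~\ref{thm:stanleyrectangular}, by peeling off the boxes of $\mu$ that lie outside $R$. The starting point is purely combinatorial: the skew shape $(\mu\cup m^n)/\mu$ equals $m^n/\sigma$ box for box --- in each row $i\le n$ both consist of the columns $\mu_i+1,\dots,m$ if $\mu_i<m$ and are empty otherwise --- so $R$ is its bounding rectangle; outside $R$ the partitions $\mu$ and $\mu\cup m^n$ literally coincide; a short arm/leg computation gives $\bsigma=\bar\mu_R$; and cropping $(\mu,\mu\cup m^n)$ to $R$ produces the pair $(\mu_R,k^\ell)$. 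Thus \eqref{eq:rectunion} asserts that $\jackjLR_{\mu,\bsigma}^{\mu\cup m^n}$ depends on $(\mu,\mu\cup m^n)$ only through their common skew shape together with the hooks of the boxes outside its bounding box.

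The engine is a pair of \emph{stripping lemmas}. Fix $\nu$, let $\mu\subseteq\lambda$, and let $S$ be either the leftmost column or the topmost row of $\mu$, common to $\mu$ and $\lambda$ and disjoint from $\lambda/\mu$; then $(\lambda\setminus S)/(\mu\setminus S)=\lambda/\mu$ up to a translation, and
\[
\jackjLR_{\mu\nu}^{\lambda}=\Bigg(\prod_{b\in S}\frac{\ahc{\Phi_b}{\mu}{b}}{\ahc{\Phi_b}{\lambda}{b}}\Bigg)\,\jackjLR_{\,\mu\setminus S,\,\nu}^{\,\lambda\setminus S},
\]
where $\Phi_b=\mathrm L$ for every $b\in S$ whose row meets $\lambda/\mu$ if $S$ is a column, $\Phi_b=\mathrm U$ for every $b\in S$ whose column meets $\lambda/\mu$ if $S$ is a row, and $\Phi_b$ is arbitrary otherwise --- such a box $b$ having the same arm and leg in $\mu$ and in $\lambda$, so that its contribution is $1$. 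Applying the row version to the rows of $\mu$ above $R$ one at a time, and then the column version to the columns of $\mu$ to the left of $R$ (which also sweeps away any rows of $\mu$ below row $n$, as these have at most $\mu_n$ boxes), peels $(\mu,\mu\cup m^n)$ down to $(\mu_R,k^\ell)$; the accumulated product is exactly $F_R$, and the residual coefficient $\jackjLR_{\mu_R,\bar\mu_R}^{k^\ell}$ is supplied by Theorem~\ref{thm:stanleyrectangular}.

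To prove the stripping lemmas I would compare Pieri expansions. Using $\jackjLR_{\mu\nu}^{\lambda}j_\lambda=\langle J_\mu J_\nu,J_\lambda\rangle$, Stanley's formula $j_\lambda=\prod_b\lh{\lambda}{b}\uh{\lambda}{b}$ from \eqref{eq:jacknorm}, and the observation that deleting a boundary row or column translates every remaining box without changing its arm or leg (so $j_\lambda/j_{\lambda\setminus S}=\prod_{b\in S}\lh{\lambda}{b}\uh{\lambda}{b}$, and likewise for $\mu$), the lemma is equivalent to a statement about the ratio $\langle J_\mu J_\nu,J_\lambda\rangle/\langle J_{\mu\setminus S}J_\nu,J_{\lambda\setminus S}\rangle$. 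For $\nu=(r)$ a single row this follows from the explicit Pieri rule of Theorem~\ref{thm:pieri}: since $S$ lies outside the bounding box of the horizontal strip $\lambda/\mu$, all the $A$-factors not meeting $S$ cancel between the $\mu$ and the $\lambda$ sides, leaving precisely the hooks of the boxes of $S$ in the rows (or columns) that the strip occupies. One then extends to general $\nu$ by expanding $J_\nu$ in products of one-row Jacks --- or dually of one-column Jacks, using the dual-Pieri rule for the vertical case --- and invoking multilinearity.

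The step I expect to be the main obstacle is the stripping lemma itself. Deleting a row or column from $\mu$ and $\lambda$ simultaneously while fixing $\nu$ is not a Jack--Pieri move (Pieri grows a single partition by a horizontal strip), so one is forced into an indirect comparison of Pieri and dual-Pieri expansions, and the delicate point is twofold: that the boxes of $S$ contribute a \emph{single} hook each --- the correct one of $\mathrm U$ or $\mathrm L$, with the correct partition in the numerator and denominator --- rather than more complicated rational functions of $\alpha$; and that this clean behaviour genuinely requires $S$ to lie \emph{outside} the bounding box of $\lambda/\mu$ and above or to the left of it, not merely to avoid $\lambda/\mu$ as a set (indeed the analogous bottom-row stripping already fails). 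Keeping track of the error terms in the general-$\nu$ reduction, and organising the induction so that only these admissible peeling moves are ever used, is where the real work lies.
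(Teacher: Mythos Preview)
Your approach is correct in outline and genuinely different from the paper's. The paper does not peel anything off: it observes that the Rectangular Union case is an immediate corollary of the \emph{Locality} theorem (Theorem~\ref{thm:locality}), which in turn is a direct consequence of Bravi--Gandini's identity \cite[Theorem~10]{Bravi:2021} relating skew Jack functions whose skew shapes coincide up to translation. Concretely, the paper shows
\[
\frac{J_{\lambda/\mu}}{j_\lambda}=F^\lambda_{\mu,R}\cdot\frac{J_{\lambda_R/\mu_R}}{j_{\lambda_R}}
\]
in one stroke and then pairs with $J_\nu$. Your stripping lemmas are exactly the one-row and one-column special cases of this identity, which you propose to prove from scratch via Pieri and a multilinearity/telescoping argument rather than quoting Bravi--Gandini. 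So what you buy is self-containment (only Stanley's Pieri rule is needed); what the paper buys is brevity and an immediate generalisation to any rectangular window.

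Two remarks on the execution. First, the telescoping you flag as the obstacle really does go through, and for the first (longest) row it is slightly subtler than for the leftmost column: along a chain $\mu=\tau_0\subset\cdots\subset\tau_k=\lambda$ of horizontal strips, the Pieri hook at a box $b$ of $S$ is $\mathrm U$ only when the column of $b$ meets that particular strip and $\mathrm L$ otherwise, so the choice is not constant along the chain. The point is that at an $\mathrm L$-step the column height is unchanged, hence the ratio is $1$, while the $\mathrm U$-steps contribute consecutive $h^{\mathrm U}$-factors that telescope to $h^{\mathrm U}_\mu(b)/h^{\mathrm U}_\lambda(b)$ independently of the chain. You should make this explicit. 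Second, your claim that ``bottom-row stripping already fails'' is not quite right: if the last row of $\mu$ is shared with $\lambda$ and lies strictly below $R$ then its boxes have leg $0$ in both partitions and the stripping factor is $1$; the genuine obstruction is only to removing an \emph{interior} row below $R$, which is never needed in your induction. Once these two points are tightened, the argument is complete.
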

Thus we see that the rule for the Rectangular union case, $\JackjLR_{\mu,\bsigma}^{\mu\cup m^n}$, is equivalent to starting with the rule for the Rectangular case \eqref{thm:stanleyrectangular} inside the `window' $R$, given by $\JackjLR_{\mu_R, \bar \mu_R}^{k^\ell}$, and extending outside of this window by $F_R$.
\begin{example}\label{ex:rectunion}
\begin{equation*}
\JackjLR_{42211,211}^{43331} =
\ytableausetup{boxsize=0.8em}
\frac{
\begin{ytableau}
  *(boxG)  \\
*(boxL) \sL   \\
*(boxL) \sL & *(boxS)  \\
*(boxL) \sL & *(boxS)  \\
  *(boxG)& *(boxU) \sU & *(boxU) \sU & *(boxG)  \\
\end{ytableau}\quad
\begin{ytableau}
*(boxS)   \\
*(boxS)   \\
*(boxS)  & *(boxS) 
\end{ytableau}}{
\begin{ytableau}
  *(boxG)\\
*(boxL) \sL & *(boxS)  & *(boxS)  \\
*(boxL) \sL & *(boxS)  & *(boxS)  \\
*(boxL) \sL & *(boxS)  & *(boxS)  \\
 *(boxG) & *(boxU) \sU & *(boxU) \sU & *(boxG)  \\
\end{ytableau}} \circ (\JackjLR_{11,211}^{222})_R \\
\end{equation*}
The notation $(\JackjLR)_R$ denotes taking hook choices for the boxes inside $R \cap\mu$, $R \cap\lambda$ and $\nu$ to agree with those choices of the rule $\JackjLR$ inside the brackets.
\end{example}

%\begin{theorem}[Kostka $K=1$ case, {\cite{Matveev:2023}}]
%Conjecture~\eqref{conj:strongstanley} holds in the special case when 
%we have $c_{\mu\nu}^\lambda = K_{\mu, \lambda-\mu}=1$, where the $K_{\mu, \lambda-\mu}$
%is a Kostka coefficient.  
%\end{theorem}

%%The following theorem is the only known case of Stanley where $c \geq 1$.

%\begin{theorem}[Three-parts {\cite{Naqvi:2016}}]
%Conjecture~\eqref{conj:stanley} holds in the case in which the partitions have at most three parts.
%\end{theorem}

\section{Windowing Conjecture}\label{section:windowing}

\setlength{\epigraphwidth}{0.6\textwidth}
\epigraph{You rows of houses! you window-pierc'd fa\c{c}ades! you roofs!\\
You porches and entrances! you copings and iron guards!\\
You windows whose transparent shells might expose so much!\\
You doors and ascending steps! you arches!\\
You gray stones of interminable pavements! you trodden crossings!\\
From all that has touch'd you I believe you have imparted to yourselves, and now would impart the same secretly to me...}{\textit{Walt Whitman \\ Song of the Open Road}}

Motivated by the Rectangular Union case~\eqref{thm:rectunion}, we introduce the following constructions.

\begin{definition}
A \emph{\bf{window}} is a shape $R$ that is closed under meets and joins of its boxes (hence, necessarily a union of rectangles). 

For a partition $\mu$, let $\mu_R$ be the \emph{\bf{windowed partition}} obtained by `viewing' $\mu$ through the window $R$. That is, $\mu_R$ is given by the shape $\mu \cap R$ joined along edges of $R$ and re-centered back to the origin as a partition. We color the boxes of windows with beige \begin{ytableau}
 *(boxS)  
\end{ytableau}.

For any shape $\sigma$, define the \emph{\bf{meet-join}} of $\sigma$, denoted $\meetjoin(\sigma)$, to be the window that is given by the closure of $\sigma$ under meets and joins of its boxes. In particular, for $\mu\subset \lambda$ we often consider $\meetjoin(\lambda/\mu)$, which is the smallest window such that $\mu$ and $\lambda$ agree outside of it. 
\end{definition}

\begin{example}\label{ex:windowing}
For $\lambda= 44322$, and the window $R$ given below, we have $\lambda_R = 3211$.
\begin{equation*}
\ytableausetup{boxsize=0.7em}
\lambda \coloneqq
\begin{ytableau}
{}&   \\
&    \\
 &   &  \\
  &  &  &   \\
&   &   &\\
\end{ytableau}  \\
\qquad
R \coloneqq
\begin{ytableau}
{}& *(boxS)   & *(boxS)  & *(boxS) \\
& *(boxS)   & *(boxS)  & *(boxS) \\
 & *(boxS)  & *(boxS) & *(boxS)\\
  &  &  &   \\
& *(boxS)   &*(boxS)   & *(boxS) \\
\end{ytableau}  \\
\qquad
\lambda_R = \begin{ytableau}
{}   \\
   \\
&  \\
& & \\
\end{ytableau} 
\end{equation*}
\end{example}

For further demonstration, let $\mu \subset \lambda$ be such that $\mu$ and $\lambda$ agree (as shapes) outside of two disjoint\footnote{And which can be totally separated horizontally and vertically.} rectangles $R_1, R_2$. Let $R_3 = \bmin{R_1}  {R_2}$ and $R_4 = \bmax{R_1}  {R_2}$. 
%\[\raisebox{-0.5\height}{\includegraphics[scale=0.20]{splitting1-1.png}}\]
\begin{equation}\label{fig:twowindowed}
 {\includegraphics[scale=0.8,page=3]{stanleyConjecturePics}} 
 \end{equation}

Viewing through the window $R\coloneqq R_1 + R_2 + R_3 + R_4$, the shapes are joined on their edges, defining the two windowed partitions $\lambda_R$ and $\mu_R$.
%\[ \mu_R, \lambda_R \coloneqq \raisebox{-0.5\height}{\includegraphics[scale=0.25]{splitting2-1.png}} \]
\[  {\includegraphics[scale=0.8,page=4]{stanleyConjecturePics}}  \]

We proceed by noting that there is a compatibility between windowing and Schur LR coefficients.

\begin{lemma}[Schur Windowing]\label{lemma:schurlrwindow}
For any $\mu \subset \lambda$, for all $\nu$, and $R$ a window with $\lambda/\mu \subseteq R$, we have equality of the Schur Littlewood-Richardson coefficients
\begin{equation}
c_{\mu,\nu}^{\lambda} = c_{\mu_R,\nu}^{\lambda_R}.
\end{equation}
\end{lemma}
\begin{proof}
Follows from the obvious bijection of LR tableau of skew-shape $\lambda/\mu$ and $\lambda_R/\mu_R$, or equivalently, from the equality of skew Schur functions $s_{\lambda/\mu} = s_{\lambda_R/\mu_R}$.
\end{proof}

In effect, this property says that we can translate the connected components of $\lambda/\mu$ around and the Schur LR coefficient will be invariant. Our goal is to generalise this `translation invariance' to the case of Jack LR coefficients. Following from Formula \eqref{eq:rectunion}, we make the following key definitions.

\begin{definition}
Let $\mu \subset \lambda$ be such that $\mu$ and $\lambda$ agree (as shapes) outside of a window $R$, i.e. $\meetjoin({\lambda/\mu}) \subset R$.

Define the \emph{\bf{window factor}} for $R$, denoted $\bm{F}_{\mu,R}^\lambda$, as the ratio of hook symbols given by the following product over all boxes in $\mu$ and $\lambda$ which are outside $R$, 
\begin{equation}
\bm{F}_{\mu,R}^\lambda \coloneqq \prod_{b \in \mu/R} \frac{ \ula{\mu}{b} }{ \ula{\lambda}{b}  },
\end{equation}
where for those boxes that:
\begin{itemize}
\item share a row with a box in $R$, we assign a lower hook symbol \begin{ytableau}*(boxL)\end{ytableau}.
\item share a column with a box in $R$, we assign a upper hook symbol \begin{ytableau}*(boxU)\end{ytableau}.
\item share neither a row or column with a box in $R$, then in this case the hook lengths for this box in $\mu$ and $\lambda$ agree, and thus they will cancel out when the hook symbols are evaluated. Thus, we can assign either an upper or lower hook symbol (or neither), an ambiguity indicated by the neutral symbol \begin{ytableau}*(boxG)\end{ytableau}. 
\end{itemize}

%\begin{equation}
% \bm{F}_{\mu,R}^\lambda := \raisebox{-0.5\height}{\includegraphics[scale=0.25]{splitting3-1.png}} 
% \end{equation}
For example, if we consider the window given by Figure \eqref{fig:twowindowed}, we have
\begin{equation}
 \bm{F}_{\mu,R}^\lambda :=   \raisebox{-0.5\height}{\includegraphics[scale=0.8,page=5]{stanleyConjecturePics}}  
 \end{equation}

Correspondingly, let $F_{\mu,R}^\lambda \coloneqq [\bm{F}_{\mu,R}^\lambda]$ be the evaluation that sends hook symbols to hook lengths. Often we just write $\bm{F}_{R} \equiv \bm{F}_{\mu,R}^\lambda$ if the context of the partitions is clear. Note that the window factor always has weight $|\bm{F}| = 0$.
\end{definition}

\begin{example}
In the earlier example \eqref{ex:rectunion}, with $\lambda=43331$ and $\mu=42211$, we have
\begin{equation*}
\ytableausetup{boxsize=0.8em}
R := \begin{ytableau}
{} &   & &  \\
 & *(boxS)  & *(boxS) &   \\
 & *(boxS)  & *(boxS) &  \\
 & *(boxS)  & *(boxS) &\\
 &  & &   \\
\end{ytableau}
\quad\quad\quad
\bm{F}_{\mu,R}^\lambda =
\frac{
\begin{ytableau}
  *(boxG)  \\
*(boxL)    \\
*(boxL)  & *(boxS)  \\
*(boxL)  & *(boxS)  \\
  *(boxG)& *(boxU)  & *(boxU)  & *(boxG)  \\
\end{ytableau}}{
\begin{ytableau}
  *(boxG)\\
*(boxL) & *(boxS)  & *(boxS)  \\
*(boxL) & *(boxS)  & *(boxS)  \\
*(boxL) & *(boxS)  & *(boxS)  \\
 *(boxG) & *(boxU) & *(boxU) & *(boxG)  \\
\end{ytableau}} \\
\end{equation*}
\end{example}

\begin{lemma}
If $\lambda/\mu \subseteq R' \subset R$, then $\bm{F}_{\mu,R'}^\lambda$ agrees with $\bm{F}_{\mu,R}^\lambda$ outside of $R$. Thus any window containing $\lambda/\mu$ is compatible with the smallest such window $\meetjoin(\lambda/\mu)$.
\end{lemma}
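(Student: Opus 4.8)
The plan is to compare the two window factors box by box. Since $R'$ is a window (as it must be for $\bm{F}_{\mu,R'}^\lambda$ to be defined) and $\lambda/\mu \subseteq R'$, and since $\meetjoin(\lambda/\mu)$ is by construction the \emph{smallest} window containing $\lambda/\mu$, we have $W := \meetjoin(\lambda/\mu) \subseteq R' \subseteq R$; in particular $\mu$ and $\lambda$ agree as shapes outside $W$, hence outside $R'$ and outside $R$. Both $\bm{F}_{\mu,R'}^\lambda$ and $\bm{F}_{\mu,R}^\lambda$ are products of factors $\ula{\mu}{b}/\ula{\lambda}{b}$, one for each box $b$ of $\mu$ lying outside the relevant window, and $\mu\setminus R \subseteq \mu\setminus R'$, so every box outside $R$ contributes a factor to both products. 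Thus ``agrees outside of $R$'' amounts to the claim: for each $b\in\mu$ with $b\notin R$, the $b$-factor of $\bm{F}_{\mu,R'}^\lambda$ and that of $\bm{F}_{\mu,R}^\lambda$ coincide (as ratios of hook symbols, up to the neutral ambiguity, equivalently after applying the evaluation $[\,\cdot\,]$).

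First I would fix such a box $b=(b_1,b_2)\notin R$ and recall that any hook length at $b$ is a function of $\arm(b)$ and $\leg(b)$ only, with $\arm_\mu(b)=\arm_\lambda(b)$ unless row $b_2$ meets $W$, and $\leg_\mu(b)=\leg_\lambda(b)$ unless column $b_1$ meets $W$. If neither the row nor the column of $b$ meets $W$, then $\ula{\mu}{b}$ and $\ula{\lambda}{b}$ have the same hook length for \emph{every} choice of hook symbol, so the $b$-factor evaluates to $1$ in both window factors — consistent with the third bullet in the definition of the window factor and with whatever $L$ or $U$ label the literal row/column test against $R$ or $R'$ might attach (such labels only ever produce the ratio $1$ here). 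If instead the row $b_2$ meets $W$, then since $W\subseteq R'\subseteq R$ the box $b$ shares a row with a box of $R'$ and with a box of $R$, so \emph{both} definitions assign the lower hook symbol to $b$, and the $b$-factors agree, both equal to $\lh{\mu}{b}/\lh{\lambda}{b}$; the case where the column $b_1$ meets $W$ is identical, with the upper hook symbol and $\uh{\mu}{b}/\uh{\lambda}{b}$ in its place. In the remaining (degenerate) possibility that the row \emph{and} the column of $b$ meet $W$, I would adopt once and for all a fixed tie-breaking convention depending only on $W$ (say, the lower symbol); since this refers solely to $\meetjoin(\lambda/\mu)$ it is applied identically when viewing through $R'$ and through $R$, so the $b$-factors still agree.

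The hard part is the mismatch between the \emph{literal} condition in the definition (``$b$ shares a row/column with a box of $R$'') and the \emph{effective} one (``$b$ shares a row/column with a box of $W$''): a box outside $R$ can share a row with $R$ through the part of $R$ disjoint from $W$, with no arm actually changing, so the two definitions might literally attach different labels to $b$. The argument above dodges this by noting that such boxes contribute the factor $1$ regardless, so only boxes whose row or column meets $W$ are relevant, and for those the correct hook symbol is pinned down — consistently — by the single inclusion $W\subseteq R'\subseteq R$. The one structural fact used throughout is the defining closure of windows under coordinatewise meet and join (which is also what forces a window to be a union of rectangles and what makes the tie-breaking coherent). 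Finally, the concluding sentence of the lemma is the special case $R'=\meetjoin(\lambda/\mu)$: every window $R$ with $\lambda/\mu\subseteq R$ contains $\meetjoin(\lambda/\mu)$, so $\bm{F}_{\mu,\meetjoin(\lambda/\mu)}^\lambda$ and $\bm{F}_{\mu,R}^\lambda$ agree on their common domain, namely outside $R$.
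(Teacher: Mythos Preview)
Your argument is correct and follows essentially the same line as the paper's (one-sentence) proof: a box outside $R$ whose row or column meets $R$ but not the smaller window already has equal hook lengths in $\mu$ and $\lambda$, so any label discrepancy between $\bm{F}_{\mu,R}^\lambda$ and $\bm{F}_{\mu,R'}^\lambda$ is absorbed into the neutral symbol. Your version is more thorough than the paper's---you route the case analysis through $W=\meetjoin(\lambda/\mu)$ and explicitly flag the degenerate ``both row and column'' situation that the paper's definition of the window factor leaves unspecified.
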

\begin{proof}
Any box $b$ outside of $R$ that lies in a row (or column) that intersects $R'$ but not $R$ has the property that $\ulh{\mu}{b}= \ulh{\lambda}{b}$, and hence there is a cancellation between this hook length in the numerator and denominator, and thus we assign this box the neutral symbol.
\end{proof}

\begin{definition}
For a window $R$, define the \emph{\bf{de-windowing}} operator $\left( \cdot \right)_R$ by its action on hook symbols as
\begin{equation}
\left( \bm{h}^\bullet_{\lambda_R}(b)\right)_R \coloneqq \bm{h}^\bullet_{\lambda}(b^R),
\end{equation}
where the box $b\in \lambda_R $ corresponds to $b^R \in \lambda \cap R$. This operator is extended by linearity to act on Stanley sums.

For ease of notation, we write de-windowing followed by evaluation as
\[ [ \bmD ]_R \coloneqq [\,( \bmD )_R \,]. \]
\end{definition}

\begin{example}In the case of Example \eqref{ex:windowing}, we have
\begin{equation*}
\ytableausetup{boxsize=0.8em}
\left( \begin{ytableau}
*(boxU) \sU  \\
*(boxL) \sL    \\
*(boxL) \sL  &  *(boxU) \sU \\
 *(boxU) \sU &  *(boxL) \sL &  *(boxL) \sL \\
\end{ytableau} \right)_R = 
\begin{ytableau}
{}& *(boxU) \sU   \\
&  *(boxL) \sL    \\
 & *(boxL) \sL  &  *(boxU) \sU  \\
  &  &  &   \\
& *(boxU) \sU  &  *(boxL) \sL  &  *(boxL) \sL \\
\end{ytableau} 
\end{equation*}
\end{example}

Motivated by the Rectangular Union case \eqref{thm:rectunion}, we propose that a generalisation of the Schur Windowing property \eqref{lemma:schurlrwindow} holds in general for all Jack LR coefficients. 

\begin{conjecture}[Jack Windowing]\label{conj:jackwindowing}
Assume the General Structure Conjecture \eqref{conj:generalstructure} holds. The Rules $\JackjLR_{\mu\nu}^{\lambda}$ satisfy the \emph{\bf{windowing}} property: For a window $R$ with $\lambda/\mu \subset R$, there exists a Rule $\JackjLR_{\mu_R,\nu}^{\lambda_R}$ (i.e. satisfying $[\JackjLR_{\mu_R,\nu}^{\lambda_R}] = \jackjLR_{\mu_R,\nu}^{\lambda_R}$) such that
%and $\lambda_R,\mu_R \neq \emptyset$
\begin{equation}
\JackjLR_{\mu,\nu}^{\lambda} :={\bm{F}_{\mu,R}^\lambda} \circ \left(\JackjLR_{\mu_R,\nu}^{\lambda_R}\right)_{R}.
\end{equation}
is a Rule for $\jackjLR_{\mu,\nu}^{\lambda}$.
\end{conjecture}

According to this conjecture, to prescribe a Stanley sum for a given Jack LR coefficient which is constrained by a window $R$, we fill in the boxes outside of $R$ in the diagrams with the window factor $\bm{F}_R$, and inside $R$ there exists a rule $\JackjLR_{\mu_R,\nu}^{\lambda_R}$ for which we can fill in with the de-windowed Stanley sum (extended by linearity over the sum of terms in the rule). 

We can window over the pair $\mu,\lambda$, or $\nu,\lambda$, and such a formula should hold for each pair. As before, we don't assume the sum $\JackjLR_{\mu,\nu}^{\lambda}$ is symmetric in $\mu$ and $\nu$. However, we are cautious that this non-symmetry might imply a non-compatibility between these two orders of windowing.

%{\bf\ttfamily{ This conjecture gives a vast set of relations between the various LR coefficients, which we investigate in the next subsection.}}

%Note that $\bm{F}_{\mu,R}^\lambda$ is equivariant with respect to taking transposes and flipping all hooks, that is $\bm{F}_{\mu',R'}^{\lambda'} =\overline{(\bm{F}_{\mu,R}^\lambda)}$. 

\subsection{Special Cases}

In this section, we provide evidence for the Jack Windowing conjecture by considering special cases of it.

Firstly, we consider the case where $\mu \subset \lambda$ are such that $\mu$ and $\lambda$ agree outside of a window $R$ that is a single rectangle. In this case, we have $ [\JackjLR_{\mu_R,\nu}^{\lambda_R}]_R  = \jackjLR_{\mu_R,\nu}^{\lambda_R}$, and the Jack Windowing conjecture reduces to the following result, which we are able to prove follows from the work of Bravi-Gandini \cite{Bravi:2021}.
\begin{theorem}[Locality]\label{thm:locality}
%Assume the Jack Windowing conjecture \eqref{conj:jackwindowing}.
Let $\mu \subset \lambda$ be such that $\lambda/\mu$ is contained inside a rectangular window $R$. We then have
\begin{equation}\label{eq:locality}
 \jackjLR_{\mu,\nu}^{\lambda} = F_{\mu,R}^{\lambda} \times \jackjLR_{\mu_R,\nu}^{\lambda_R}, 
 \end{equation}
where, as before, 
\[ \windfact_{\mu,R}^{\lambda} \coloneqq \raisebox{-0.5\height}{\includegraphics[scale=0.5,page=2]{stanleyConjecturePics}} \]
\end{theorem}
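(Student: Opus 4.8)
The plan is to deduce \eqref{eq:locality} from the factorization theorem of Bravi--Gandini \cite{Bravi:2021}. As a preliminary reduction I would pass to the \emph{minimal} rectangular window $R_0=\meetjoin(\lambda/\mu)$, so that every row and every column of $R_0$ contains a box of $\lambda/\mu$. Granting the statement for $R_0$, the statement for an arbitrary rectangle $R\supseteq R_0$ follows by applying it once more to the pair $(\mu_R,\lambda_R)$ inside $R$ --- whose minimal window is the image $\widehat{R_0}$ of $R_0$ --- and using that windowing is transitive, $(\mu_R)_{\widehat{R_0}}=\mu_{R_0}$; what remains is then the identity $F_{\mu,R}^{\lambda}\cdot F_{\mu_R,\widehat{R_0}}^{\lambda_R}=F_{\mu,R_0}^{\lambda}$, a finite hook-length check in the spirit of the preceding Lemma. (Should the Bravi--Gandini statement itself be phrased for a rectangle in a special position relative to $\mu$, the same kind of move puts $R_0$ into that normal form.)

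With $R=R_0$ minimal the geometry becomes rigid: a box $b$ of $\mu$ (equivalently of $\lambda$) lying outside $R$ is of exactly one of three types. If $b$ lies in a row of $R$, then it lies strictly to the left of $R$, its leg is the same in $\mu$ and $\lambda$, and its arm grows by $\lambda_i-\mu_i$; hence $[\ula{\mu}{b}]/[\ula{\lambda}{b}]=[\lhc{\mu}{b}]/[\lhc{\lambda}{b}]$, matching the $L$-assignment. If $b$ lies in a column of $R$, then symmetrically it lies strictly above $R$, its arm is unchanged and its leg grows, giving $[\uhc{\mu}{b}]/[\uhc{\lambda}{b}]$, the $U$-assignment. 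Otherwise the full hook content of $b$ coincides in $\mu$ and $\lambda$ and the contribution is $1$, the neutral symbol. So the product of these ratios over $b\in\mu/R$ is exactly $F_{\mu,R}^{\lambda}=[\windfact_{\mu,R}^{\lambda}]$, and the only task left is to identify this value with the boundary term produced by Bravi--Gandini.

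The core step is to quote their factorization and match both factors. Using the $\alpha$-Hall pairing and $\langle J_{\lambda/\mu},J_\nu\rangle=\jackjLR_{\mu\nu}^{\lambda}\,j_\lambda$, a Bravi--Gandini factorization of the skew Jack function $J_{\lambda/\mu}$ as a scalar multiple $\kappa\,J_{\lambda_R/\mu_R}$ translates into $\jackjLR_{\mu\nu}^{\lambda}=\kappa\,(j_{\lambda_R}/j_\lambda)\,\jackjLR_{\mu_R\nu}^{\lambda_R}$, where the inner coefficient is literally $\jackjLR_{\mu_R,\nu}^{\lambda_R}$ once one checks that $\mu\cap R$ and $\lambda\cap R$, recentered, are $\mu_R$ and $\lambda_R$ and that the distributed weight $\nu$ is untouched by windowing; it then remains only to verify $F_{\mu,R}^{\lambda}=\kappa\,j_{\lambda_R}/j_\lambda$ using \eqref{eq:jacknorm}. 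The main obstacle I anticipate is precisely this reconciliation: Bravi--Gandini encode their data through arms and legs measured relative to the rectangle --- likely with conjugate partitions, and in a normalization of the Jack basis (a $P$-, $Q$-, or integral form) different from the $p_1^n$-normalization used here --- whereas $F_{\mu,R}^{\lambda}$ is built from the $U/L$ hook symbols of $\mu$ and $\lambda$ directly. Checking that, after normalizing both sides, no residual $j_\mu$, $j_{\mu_R}$, $j_\lambda$, $j_{\lambda_R}$ factors survive --- and that the $\alpha\to1$ specialization collapses, via \eqref{eq:schurLRjack}, to the trivial Schur identity $c_{\mu\nu}^{\lambda}=c_{\mu_R,\nu}^{\lambda_R}$ (which holds since $\lambda/\mu$ and $\lambda_R/\mu_R$ are the same skew shape) --- is the delicate bookkeeping that converts the Bravi--Gandini theorem into \eqref{eq:locality}.
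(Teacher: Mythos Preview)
Your plan is correct and is essentially the paper's own argument: quote the Bravi--Gandini relation between $J_{\lambda/\mu}$ and $J_{\lambda_R/\mu_R}$, pass to LR coefficients via $g_{\mu\nu}^\lambda=\langle J_{\lambda/\mu}/j_\lambda,J_\nu\rangle$, and identify the resulting scalar with $F_{\mu,R}^\lambda$. The paper carries out explicitly the ``delicate bookkeeping'' you leave open: the Bravi--Gandini constant is a product of \emph{mixed} hooks $h^{\sL}_{\mu,\lambda}(b)\,h^{\sU}_{\lambda,\mu}(b)$ over $b\in\mu$; the factors for $b\in\mu\cap R$ cancel against the corresponding product for $\mu_R,\lambda_R$, and for $b\in\mu\setminus R$ a three-case analysis (left of $R$, below $R$, neither) collapses each mixed hook to an ordinary $h^{\sL}_\mu,h^{\sL}_\lambda,h^{\sU}_\mu,h^{\sU}_\lambda$, matching $F_R$ on the nose --- no stray $j_\mu$ or $j_{\mu_R}$ factors appear.

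Two minor corrections. First, your preliminary reduction to the minimal rectangle $R_0$ is unnecessary: the cancellation of boxes in $\mu\cap R$ against $\mu_R$ works for any rectangle $R\supseteq\lambda/\mu$, so the paper treats a general $R$ directly. Second, in the paper's (French) conventions the relevant boxes of $\mu\setminus R$ sharing a column with $R$ lie \emph{below} $R$, not above; boxes above $R$ in such a column have both arm and leg unchanged and cancel, just like the neutral boxes.
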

\begin{proof}
We show this follows directly from a result of Bravi-Gandini \cite[Thm 10]{Bravi:2021}. For $\lambda \subset \mu$, let $J_{\lambda/\mu}$ denote the skew Jack function. For $b\in \mu$, define\footnote{In \cite{Bravi:2021}, the notation $c_{\mu,\lambda,s} \equiv h^{\sL}_{\lambda,\mu}(s)$, and $c'_{\lambda,\mu,s} \equiv h^{\sU}_{\lambda,\mu}(s) $ is used.}
\begin{equation}
h^{\sL}_{\mu,\lambda}(b) = \arm_{\mu}(b) \alpha + \leg_{\lambda}(b) + 1, \,\,\, h^{\sU}_{\lambda,\mu}(b) \coloneqq (\arm_{\lambda}(b)+1) \alpha + \leg_{\mu}(b).
\end{equation}
For the window $R$ consisting of a single rectangle, let $\lambda_R$ and $\mu_R$ be the corresponding windowed partitions.
In the language of Bravi-Gandini, the skew shapes $\lambda/\mu$ and $\lambda_R/\mu_R$ \emph{coincide up to translation}, in which case they show the following relation of skew Jack functions
\begin{equation}\label{result:bg}
h^{\sL}_{\mu,\lambda} h^{\sU}_{\lambda,\mu} \cdot J_{\lambda_R/\mu_R} = h^{\sL}_{\mu_R,\lambda_R} h^{\sU}_{\lambda_R,\mu_R} \cdot J_{\lambda/\mu},
\end{equation}
where $h^{\sL}_{\mu,\lambda} \coloneqq \prod_{b \in \mu} h^{\sL}_{\mu,\lambda}(b)$, and $h^{\sU}_{\lambda,\mu} \coloneqq \prod_{b \in \mu} h^{\sU}_{\lambda,\mu}(b)$.
We notice that for a box $b \in \mu \cap R \subseteq \mu$, we have $h^{\sL}_{\lambda,\mu}(b) = h^{\sL}_{ \lambda_R,\mu_R}(b_R)$. Thus, we can cancel factors from all boxes inside $R$, and re-write equation \eqref{result:bg} as
\begin{equation}
J_{\lambda/\mu} = \prod_{b \in \mu/ R} h^{\sL}_{\lambda,\mu}(b) h^{\sU}_{\mu,\lambda}(b) \cdot J_{\lambda_R/\mu_R}.
\end{equation}
As we are interested in LR coefficients, which are given by $g_{\mu\nu}^{\lambda} = \langle J_{\lambda/\mu}/j_\lambda ,J_\nu \rangle$, we write the above in terms of the quantities $J_{\lambda/\mu}/j_\lambda$, 
\begin{equation}\label{eq:bgbigprod}
\frac{J_{\lambda/\mu}}{j_\lambda} = \prod_{b \in \mu/ R} \frac{h^{\sL}_{\lambda,\mu}(b) h^{\sU}_{\mu,\lambda}(b)}{h^{\sL}_{\lambda}(b) h^{\sU}_{\lambda}(b)} \cdot \frac{J_{\lambda_R/\mu_R}}{j_{\lambda_R}}.
\end{equation}
Now, if $b \in \mu/R$ does not share a row or column with $R$, then $\arm_\mu(b) = \arm_\lambda(b)$ and $\leg_\mu(b) = \leg_\lambda(b)$, and hence $h^{\sL}_{\lambda,\mu}(b) = h^{\sL}_{\lambda}(b)= h^{\sL}_{\mu}(b)$, and similarly $h^{\sU}_{\mu,\lambda}(b) = h^{\sU}_{\lambda}(b)= h^{\sU}_{\mu}(b)$. We see that the contribution from those such boxes $b$ to the product in \eqref{eq:bgbigprod} vanishes. Thus, that same product reduces to one over the two regions $R_{below}$ and $R_{left}$, of those boxes $b \in \mu/R$ either below or left of $R$. For a box $b \in R_{left}$, we have $\leg_\mu(b) = \leg_\lambda(b)$, and thus $h^{\sL}_{\mu,\lambda}(b) = h^{\sL}_{\mu}(b)$, and $h^{\sU}_{\lambda,\mu}(b) = h^{\sU}_{\lambda}(b)$. For a box $b \in R_{below}$, we have $\arm_\mu(b)=\arm_\lambda(b)$, and thus $h^{\sL}_{\mu,\lambda}(b) = h^{\sL}_{\lambda}(b)$ and $h^{U}_{\lambda/\mu}(b) = h^{\sU}_\mu(b)$. Thus we have shown that equation \eqref{eq:bgbigprod} reduces to
\begin{equation}
\frac{J_{\lambda/\mu}}{j_\lambda} = \prod_{b \in R_{left}} \frac{h^{\sL}_{\mu}(b) }{h^{\sL}_{\lambda}(b) }\prod_{b \in R_{below}} \frac{ h^{\sU}_{\mu}(b)}{ h^{\sU}_{\lambda}(b)} \cdot \frac{J_{\lambda_R/\mu_R}}{j_{\lambda_R}}.
\end{equation}
We see this factor is precisely $F^\lambda_{\mu,R}$, that is, 
\begin{equation}\label{eq:skeqjackwindow}
\frac{J_{\lambda/\mu}}{j_\lambda} = F^\lambda_{\mu,R} \cdot \frac{J_{\lambda_R/\mu_R}}{j_{\lambda_R}}.
\end{equation}
Taking the inner product with $J_\nu$, we recover the result.
\end{proof}

\begin{corollary}
The Rectangular union case \eqref{thm:rectunion} follows immediately from the Locality property \eqref{thm:locality}. 
\end{corollary}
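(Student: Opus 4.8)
The plan is to derive the Rectangular Union case \eqref{thm:rectunion} from the Locality Theorem \eqref{thm:locality} by recognizing the specialized LR coefficient $\jackjLR_{\mu,\bsigma}^{\mu\cup m^n}$ as an instance of the locality setup, with the residual windowed coefficient being covered by the Rectangular case \eqref{thm:stanleyrectangular}. First I would identify the pair of partitions: in the rectangular union case we have $\lambda = \mu \cup m^n$ and the lower argument $\bsigma$, where $\sigma = m^n \cap \mu$. The key observation is that $\mu$ and $\mu \cup m^n$ agree (as shapes) outside of the window $R = \meetjoin((\mu\cup m^n)/\mu)$, which, as noted in the statement of \eqref{thm:rectunion}, is precisely the smallest rectangular shape $R = k^\ell$ such that $\mu$ and $\mu \cup m^n$ agree outside $R$. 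So this is exactly the hypothesis of Theorem~\eqref{thm:locality}, with a single-rectangle window.

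Next I would apply \eqref{eq:locality} directly:
\begin{equation*}
\jackjLR_{\mu,\bsigma}^{\mu\cup m^n} = F_{\mu,R}^{\mu\cup m^n} \times \jackjLR_{\mu_R,\bsigma}^{(\mu\cup m^n)_R}.
\end{equation*}
I would then need to check that the windowed data matches what appears in \eqref{eq:rectunion}. On the one hand, the window factor $F_{\mu,R}^{\mu\cup m^n}$ consists of $L$-hooks for boxes left of $R$ and $U$-hooks for boxes below $R$ (with neutral symbols for the rest), for both $\mu$ in the numerator and $\mu\cup m^n$ in the denominator — this is exactly the factor $F_R$ described in \eqref{thm:rectunion}. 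On the other hand, I would verify that the windowed partitions are $(\mu\cup m^n)_R = k^\ell$ (the full rectangle, since $\mu\cup m^n$ contains $m^n$ and hence fills $R$) and $\mu_R = \mu_R$ in the notation of the theorem, while the windowed lower argument $\bsigma$ restricts to $\bar\mu_R$, the reverse of $k^\ell/\mu_R$. The last identification is the crux: one must check that $\bsigma$, viewed appropriately, plays the role of the complement partition inside the window $R$, i.e. that the skew shape data is consistent. Once this is in place, the residual coefficient $\jackjLR_{\mu_R,\bar\mu_R}^{k^\ell}$ is governed by the Rectangular case \eqref{thm:stanleyrectangular}, and we recover \eqref{eq:rectunion} verbatim.

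The main obstacle I anticipate is the bookkeeping in matching the lower argument: Theorem~\eqref{thm:locality} as stated in the excerpt treats $\nu$ as a fixed spectator and only windows the pair $(\mu,\lambda)$, so one must argue that restricting to $\bsigma$ (rather than some windowed version of it) is legitimate here — this works precisely because $\bsigma$ is supported inside $R$ to begin with, so $\bsigma$ is unchanged by the windowing and the inner product with $J_{\bsigma}$ in the proof of \eqref{thm:locality} goes through unchanged. A secondary point to be careful about is that \eqref{thm:locality} is stated for $\mu\subset\lambda$ with $\lambda/\mu$ inside a rectangle, and one should confirm that in the rectangular union construction we indeed have $\mu \subset \mu\cup m^n$ and that $\meetjoin((\mu\cup m^n)/\mu)$ is genuinely a single rectangle rather than a union of rectangles; this follows because $\mu\cup m^n$ and $\mu$ differ exactly on the boxes of $m^n$ not already in $\mu$, whose meet-join closure is the rectangle $k^\ell$ described in the theorem. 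With these two checks done, the corollary is immediate.
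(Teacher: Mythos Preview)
Your proposal is correct and spells out precisely the argument the paper leaves implicit: the paper gives no proof of this corollary, treating it as immediate, and your derivation via Locality with the single-rectangle window $R=k^\ell$, together with the identification $\bsigma=\bar\mu_R$ (which you correctly flag as the only nontrivial check), is exactly what is intended. One minor wording point: you need not worry about whether $\meetjoin((\mu\cup m^n)/\mu)$ is itself a rectangle, since Locality only requires $\lambda/\mu\subset R$ for \emph{some} rectangular $R$, and the $R$ defined in Theorem~\ref{thm:rectunion} is rectangular by fiat.
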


In \cite{Alexandersson:2023}, the Rectangular Union LR coefficient \eqref{thm:rectunion} was calculated directly using a very different approach that involved an intricate calculation. The corollary above demonstrates a vast simplification over that technique. 

\begin{example}\label{ex:locality} By computation, we can verify the following $c=2$ example
\begin{equation*}
\jackjLR_{421,21}^{4321} =
\ytableausetup{boxsize=0.8em}
\frac{
\begin{ytableau}
*(boxS) \\
*(boxS)  & *(boxS)  \\
*(boxU) \sU & *(boxU) \sU & *(boxU) \sU & *(boxG)  \\
\end{ytableau}\quad \begin{ytableau}
*(boxS) \\
*(boxS) &*(boxS) \\
\end{ytableau}
}{
\begin{ytableau}
*(boxS) \\
*(boxS) &*(boxS) \\
*(boxS)  & *(boxS)  &*(boxS)   \\
*(boxU) \sU & *(boxU) \sU & *(boxU) \sU & *(boxG)  \\
\end{ytableau}} \times  \jackjLR_{21,21}^{321} .\\
\end{equation*}
\end{example}

%\begin{example} The Locality Conjecture \eqref{thm:locality} agrees with Window Masking Conjecture \eqref{conj:windowpositivity} in the case where $R$ is just a single rectangle, and thus Ex.~\eqref{ex:locality} is an example of both.
%\end{example}

If $c_{\mu\nu}^{\lambda} = 1$, then the Strong Stanley conjecture \eqref{conj:strongstanley2} and the Jack Windowing conjecture \eqref{conj:jackwindowing} combine into the following statement.

\begin{conjecture}[Simple Windowing]\label{conj:simplewindowing}
Suppose $c_{\mu\nu}^{\lambda} = 1$ and let $\lambda/\mu \subset R$ be a window. By Lemma \eqref{lemma:schurlrwindow} we also have $c_{\mu_R\nu}^{\lambda_R}=1$. 
Assume the Strong Stanley conjecture \eqref{conj:strongstanley} holds. There exists a one-term rule $\JackjLR_{\mu_R,\nu}^{\lambda_R} = \bmD$ (i.e.  $[\JackjLR_{\mu_R,\nu}^{\lambda_R}] = \jackjLR_{\mu_R,\nu}^{\lambda_R}$), such that 
\[ \JackjLR_{\mu,\nu}^{\lambda} = \windfact_{R} \circ \left( \JackjLR_{\mu_R,\nu}^{\lambda_R} \right)_R, \]
is a one-term rule for $\jackjLR_{\mu,\nu}^{\lambda}.$ 
That is, this solves $[\JackjLR_{\mu,\nu}^{\lambda}] = \jackjLR_{\mu,\nu}^{\lambda}$. 

In other words, the Strong Stanley conjecture holds for $\jackjLR_{\mu,\nu}^{\lambda}$ (the evaluation of a single Stanley diagram with equal number of upper and lower hooks).
\end{conjecture}
The subtlety of this conjecture is in the non-uniqueness of the Stanley Diagrams.
\begin{example}
Consider the following two LR coefficients
\begin{equation}\label{eq:lrcalc1}
g_{2,21}^{32} = \frac{2\alpha(2+\alpha)}{2(1+\alpha)^2(1+2\alpha)}, \qquad \jackjLR_{22,21}^{322}=\frac{2\alpha^2(2+\alpha)}{(1+\alpha)^3(3+2\alpha)}.
\end{equation}
As these are both $c=1$ cases (and related through windowing), the strong Stanley conjecture predicts the existence of weight-0 Stanley diagrams $\JackjLR_{22,21}^{322}, \JackjLR_{2,21}^{32}$ that evaluate to these coefficients.
According to conjecture \ref{conj:simplewindowing}, these diagrams can be chosen in a way that they are compatible via windowing, i.e.
\[ \JackjLR_{22,21}^{322} = \frac{ \begin{ytableau}
*(boxU)\sU   &  *(boxU)\sU \\
*(boxS)    &  *(boxS) 
\end{ytableau}\,\,\begin{ytableau}
*(boxS)   \\
*(boxS)   &  *(boxS) 
\end{ytableau} }{ \begin{ytableau}
*(boxS)    & *(boxS)  \\
*(boxU)\sU   &  *(boxU)\sU \\
*(boxS)  & *(boxS)  & *(boxS) 
\end{ytableau} } \circ (\JackjLR_{2,21}^{32})_R\]
The first expression of \ref{eq:lrcalc1} forces the following hook choices for the diagram $\JackjLR_{2,21}^{32}$,
\[ \frac{ \begin{ytableau}
*(boxU)\sU   &  *(boxG)
\end{ytableau}\,\,\begin{ytableau}
*(boxG)  \\
*(boxL)\sL   &  *(boxG) 
\end{ytableau} }{ \begin{ytableau}
*(boxL)\sL   & *(boxG)  \\
*(boxL)\sL  & *(boxU)\sU & *(boxG) 
\end{ytableau} }, \]
where the unassigned boxes cannot be determined from \ref{eq:lrcalc1}, however they must contribute a factor of $1$. After de-windowing this partial assignment, the resulting diagram is.
\[ \frac{ \begin{ytableau}
*(boxU)\sU   &  *(boxU)\sU \\
*(boxU)\sU   &  *(boxG)  *
\end{ytableau}\,\,\begin{ytableau}
*(boxG)  \\
*(boxL)\sL   &  *(boxG) 
\end{ytableau} }{ \begin{ytableau}
*(boxL)\sL   & *(boxG)  \\
*(boxU)\sU   &  *(boxU)\sU \\
*(boxL)\sL  & *(boxU)\sU & *(boxG) 
\end{ytableau} } \]
This gives us the contribution
\[  \frac{\alpha^2(2+\alpha)}{(3+2\alpha)(1+\alpha)^3} \]

Thus the box with the $(*)$ must be a lower hook, to give the missing factor of 2, which results in 
\[ \frac{ \begin{ytableau}
*(boxU)\sU   & *(boxL)\sL
\end{ytableau}\,\,\begin{ytableau}
*(boxG)  \\
*(boxL)\sL   &  *(boxG) 
\end{ytableau} }{ \begin{ytableau}
*(boxL)\sL   & *(boxG)  \\
*(boxL)\sL  & *(boxU)\sU & *(boxG) 
\end{ytableau} }, \]

Importantly, note that this assignment differs to what would have been determined by applying the Pieri rule (although the $\nu,\mu$ factors are in the wrong order).

\end{example}

\begin{example}
We show that the Pieri rule~\eqref{thm:pieri} is a special case of the Simple Windowing Conjecture \eqref{conj:simplewindowing}. We consider the earlier example \eqref{ex:pieri}, however now we note that we can cancel several hooks in the fraction that appears within, to recover a simpler picture:
\begin{equation*}
\JackjLR_{75411,3}^{76431} =
\ytableausetup{boxsize=0.8em}
\frac{
\begin{ytableau}
 *(boxG)\\
*(boxL) \sL \\
 *(boxG)& *(boxU) \sU  & *(boxU) \sU  & *(boxG) \\
*(boxL) \sL & *(boxU) \sU  & *(boxU) \sU  & *(boxL) \sL  &*(boxL) \sL \\
 *(boxG) & *(boxU) \sU  &*(boxU) \sU  & *(boxG) & *(boxG)  & *(boxU) \sU & *(boxG)
\end{ytableau}
\quad
\begin{ytableau}
*(boxU) \sU & *(boxU) \sU & *(boxU) \sU
\end{ytableau}}{
\begin{ytableau}
 *(boxG)\\
*(boxL) \sL & *(boxU) \sU  & *(boxU) \sU \\
 *(boxG)& *(boxU) \sU  & *(boxU) \sU  & *(boxG) \\
*(boxL) \sL  & *(boxU) \sU  & *(boxU) \sU  & *(boxL) \sL &*(boxL) \sL  & *(boxU) \sU \\
 *(boxG)& *(boxU) \sU  &*(boxU) \sU  & *(boxG)  & *(boxG)  & *(boxU) \sU & *(boxG) 
\end{ytableau}} \\
\end{equation*}
We split this up according to the window $R = \meetjoin(\lambda/\mu)$:

\begin{equation*}
\ytableausetup{boxsize=0.8em}
\frac{
\begin{ytableau}
 *(boxG)\\
*(boxL) \sL \\
 *(boxG)& *(boxU) \sU  & *(boxU) \sU  & *(boxG) \\
*(boxL) \sL & *(boxS)   & *(boxS)   & *(boxL) \sL  &*(boxL) \sL \\
 *(boxG) & *(boxU) \sU  &*(boxU) \sU  & *(boxG) & *(boxG)  & *(boxU) \sU & *(boxG) 
\end{ytableau}
\quad
\begin{ytableau}
*(boxS)  & *(boxS) & *(boxS) 
\end{ytableau}}{
\begin{ytableau}
 *(boxG)\\
*(boxL) \sL & *(boxS)  & *(boxS) \\
 *(boxG)& *(boxU) \sU  & *(boxU) \sU  & *(boxG) \\
*(boxL) \sL  & *(boxS)  & *(boxS)   & *(boxL) \sL &*(boxL) \sL  & *(boxS)  \\
 *(boxG)& *(boxU) \sU  &*(boxU) \sU  & *(boxG)  & *(boxG)  & *(boxU) \sU & *(boxG) 
\end{ytableau}} \\
\end{equation*}
We see that this factor is precisely $\windfact_{R}$, as predicted by the conjecture.
The remaining windowed (i.e. beige) boxes are filled in with the de-windowed basic Pieri rule,
\begin{equation}
\JackjLR_{2,3}^{32} = \ytableausetup{boxsize=0.8em}
\frac{
\begin{ytableau}
*(boxU) \sU & *(boxU) \sU 
\end{ytableau}
\quad
\begin{ytableau}
*(boxU) \sU & *(boxU) \sU & *(boxU) \sU
\end{ytableau}}{
\begin{ytableau}
*(boxU) \sU & *(boxU) \sU   \\
*(boxU) \sU & *(boxU) \sU & *(boxU) \sU 
\end{ytableau}}. \\
\end{equation}
%[NO] Hence we see that the Pieri rule in general follows from the solution to the basic problem $\jackjLR_{(r),(r+k)}^{\{r+k,r\}} $ given by all upper hooks, and iterated application of the Strong Masking conjecture~\eqref{conj:simplewindowing} (and potentially using the basic factorization property).
\end{example}

\begin{example}\label{ex:stanelystableaureworked}
Consider the earlier Maximal Filling tableau example \eqref{ex:stanleytableau} of $\lambda = 766654211$, $\mu = 7553322$, $\nu=542$. We will show this is also an example of the Jack Windowing conjecture~\eqref{conj:jackwindowing}. As in the previous example, we cancel out hooks that appear equally in the numerator and denominator and extract the minimal window $R=\meetjoin(\lambda/\mu)$
\begin{equation*}
\ytableausetup{boxsize=0.8em}
\frac{
\begin{ytableau}
*(boxU) & *(boxL)   \\
*(boxU) & *(boxL)   \\
*(boxU) & *(boxL)  & *(boxL)  \\
*(boxL) & *(boxL)  & *(boxL)   \\
*(boxU) & *(boxL)  & *(boxL)  & *(boxU) & *(boxU)  \\
*(boxL) & *(boxL)  & *(boxL)  & *(boxU) & *(boxL)  \\
*(boxU) & *(boxL)  & *(boxU)  & *(boxU) & *(boxU) & *(boxU) & *(boxL) \\
\end{ytableau} \quad
\begin{ytableau}
*(boxU)  &*(boxU)   \\
*(boxU)  &*(boxU)  &*(boxU)  &*(boxU)     \\
*(boxU)  &*(boxU)  &*(boxU)  &*(boxU)  &*(boxU)  
\end{ytableau}}{
\begin{ytableau}
*(boxU)  \\
*(boxU)  \\
*(boxU) & *(boxL)   \\
*(boxL) & *(boxL)  & *(boxL)  & *(boxU) \\
*(boxU) & *(boxL)  & *(boxL)  & *(boxU) & *(boxU)\\
*(boxL) & *(boxL)  & *(boxL)  & *(boxU) & *(boxL) & *(boxU) \\
*(boxU) & *(boxL)  & *(boxL)  & *(boxU) & *(boxU) & *(boxU) \\
*(boxU) & *(boxL)  & *(boxU)  & *(boxU) & *(boxU) & *(boxU) \\
*(boxU) & *(boxL)  & *(boxU)  & *(boxU) & *(boxU) & *(boxU) & *(boxL)\\
\end{ytableau}}
\,\,\to\,\,
\frac{
\begin{ytableau}
*(boxU) & *(boxG)    \\
*(boxS) & *(boxL)   \\
*(boxS) & *(boxL)  & *(boxS)  \\
*(boxS) & *(boxL)  & *(boxS)   \\
*(boxS) & *(boxL)  & *(boxS)  & *(boxS) & *(boxS)  \\
*(boxS) & *(boxL)  & *(boxS)  & *(boxS) & *(boxS)  \\
*(boxU) & *(boxG)   & *(boxU)  & *(boxU) & *(boxU) & *(boxU) & *(boxG) \\
\end{ytableau} \quad
\begin{ytableau}
*(boxS)  &*(boxS)   \\
*(boxS)  &*(boxS)  &*(boxS)  &*(boxS)     \\
*(boxS)  &*(boxS)  &*(boxS)  &*(boxS)  &*(boxS)  
\end{ytableau}}{
\begin{ytableau}
*(boxS)  \\
*(boxS)  \\
*(boxU) & *(boxG)  \\
*(boxS) & *(boxL)  & *(boxS)  & *(boxS) \\
*(boxS) & *(boxL)  & *(boxS)  & *(boxS) & *(boxS)\\
*(boxS) & *(boxL)  & *(boxS)  & *(boxS) & *(boxS) & *(boxS) \\
*(boxS) & *(boxL)  & *(boxS)  & *(boxS) & *(boxS) & *(boxS) \\
*(boxS) & *(boxL)  & *(boxS)  & *(boxS) & *(boxS) & *(boxS) \\
*(boxU) & *(boxG)  & *(boxU)  & *(boxU) & *(boxU) & *(boxU) & *(boxG) \\
\end{ytableau}}  \\
\end{equation*}
We see the right hand side agrees with the $\windfact_{R}$ factor.
The beige windowed region is then filled in with the following maximal filling rule
\begin{equation}\label{fig:stanelyfactor}
\JackjLR_{44221,542}^{5554311} =
\ytableausetup{boxsize=0.8em}
\frac{
\begin{ytableau}
*(boxU)   \\
*(boxU)  & *(boxL)  \\
*(boxL)  & *(boxL)   \\
*(boxU)   & *(boxL)  & *(boxU) & *(boxU)  \\
*(boxL)   & *(boxL)  & *(boxU) & *(boxL)  \\
\end{ytableau} \quad
\begin{ytableau}
*(boxU)  &*(boxU)   \\
*(boxU)  &*(boxU)  &*(boxU)  &*(boxU)     \\
*(boxU)  &*(boxU)  &*(boxU)  &*(boxU)  &*(boxU)  
\end{ytableau}}{
\begin{ytableau}
*(boxU)  \\
*(boxU)  \\
*(boxL)   & *(boxL)  & *(boxU) \\
*(boxU)  & *(boxL)  & *(boxU) & *(boxU)\\
*(boxL)  & *(boxL)  & *(boxU) & *(boxL) & *(boxU) \\
*(boxU)   & *(boxL)  & *(boxU) & *(boxU) & *(boxU) \\
*(boxU)   & *(boxU)  & *(boxU) & *(boxU) & *(boxU) \\
\end{ytableau}}, \\
\end{equation}
where the maximal filling is now
\[ T_R =\ytableausetup{boxsize=0.8em} \begin{ytableau}
2  \\
1 \\
    & 1 & 3 \\
  &  & 2& 2\\
  &    & 1 & 1 & 3 \\
  &   & &  & 2 \\
  &    &  &  & 1 \\
\end{ytableau}.\]
Indeed, this rule could also be determined by a sequence of windowing:
\begin{equation}
\ytableausetup{boxsize=0.8em}
\frac{
\begin{ytableau}
 *(boxU)  \\
\end{ytableau} \,\,
\begin{ytableau}
*(boxU)  &*(boxU)   \\
\end{ytableau}}{
\begin{ytableau}
 *(boxU) \\
 *(boxU) & *(boxU) \\
\end{ytableau}} \to 
\frac{
\begin{ytableau}
*(boxL) \mk   \\
*(boxL)  \mk & *(boxU) & *(boxL) \mk  \\
\end{ytableau} \,\,
\begin{ytableau}
*(boxU)  &*(boxU)   \\
\end{ytableau}}{
\begin{ytableau}
*(boxL) \mk & *(boxU) \\
*(boxL) \mk & *(boxU) & *(boxL) \mk  & *(boxU) \\
\end{ytableau}} \to 
\frac{
\begin{ytableau}
*(boxU) \mk  \\
*(boxU) \mk   \\
*(boxL)    \\
*(boxL)   & *(boxU) & *(boxL)  \\
\end{ytableau} \,\,
\begin{ytableau}
*(boxU)  &*(boxU)   \\
\end{ytableau}}{
\begin{ytableau}
*(boxU) \mk \\
*(boxU) \mk \\
*(boxL)  & *(boxU) \\
*(boxL)  & *(boxU) & *(boxL) & *(boxU) \\
\end{ytableau}} \to 
\frac{
\begin{ytableau}
*(boxU)   \\
*(boxU)    \\
*(boxL)    \\
*(boxU)   \mk & *(boxU)  \mk & *(boxU)  \mk \\
*(boxL)   & *(boxU) & *(boxL)  \\
\end{ytableau} \,\,
\begin{ytableau}
*(boxU)  &*(boxU)   \\
\end{ytableau}}{
\begin{ytableau}
*(boxU)  \\
*(boxU)  \\
*(boxL)  & *(boxU) \\
*(boxU) \mk & *(boxU)  \mk& *(boxU)  \mk\\
*(boxL)  & *(boxU) & *(boxL) & *(boxU) \\
\end{ytableau}}  \to
\end{equation} 

\begin{equation}
\ytableausetup{boxsize=0.8em}
\frac{
\begin{ytableau}
*(boxU)   \\
*(boxU)    \\
*(boxL)    \\
*(boxU)   & *(boxU) & *(boxU)  \\
*(boxL)   & *(boxU) & *(boxL)  \\
\end{ytableau} \,\,
\begin{ytableau}
*(boxU)  &*(boxU)   \\
*(boxU)  \mk &*(boxU) \mk &*(boxU) \mk &*(boxU)   \mk
\end{ytableau}}{
\begin{ytableau}
*(boxU)  \\
*(boxU)  \\
*(boxL)  & *(boxU) \\
*(boxU)  & *(boxU) & *(boxU)\\
*(boxL)  & *(boxU) & *(boxL) & *(boxU) \\
*(boxU)  \mk& *(boxU) \mk& *(boxU) \mk& *(boxU) \mk\\
\end{ytableau}} \to
\frac{
\begin{ytableau}
*(boxU)   \\
*(boxU)  & *(boxL) \mk \\
*(boxL)  & *(boxL) \mk  \\
*(boxU)   & *(boxL) \mk & *(boxU) & *(boxU)  \\
*(boxL)   & *(boxL) \mk & *(boxU) & *(boxL)  \\
\end{ytableau} \,\,
\begin{ytableau}
*(boxU)  &*(boxU)   \\
*(boxU)  &*(boxU)  &*(boxU)  &*(boxU)   
\end{ytableau}}{
\begin{ytableau}
*(boxU)  \\
*(boxU)  \\
*(boxL)   & *(boxL) \mk & *(boxU) \\
*(boxU)  & *(boxL) \mk & *(boxU) & *(boxU)\\
*(boxL)  & *(boxL) \mk & *(boxU) & *(boxL) & *(boxU) \\
*(boxU)   & *(boxL) \mk & *(boxU) & *(boxU) & *(boxU) \\
\end{ytableau}} \to
\frac{
\begin{ytableau}
*(boxU)   \\
*(boxU)  & *(boxL)  \\
*(boxL)  & *(boxL)   \\
*(boxU)   & *(boxL)  & *(boxU) & *(boxU)  \\
*(boxL)   & *(boxL)  & *(boxU) & *(boxL)  \\
\end{ytableau} \,\,
\begin{ytableau}
*(boxU)  &*(boxU)   \\
*(boxU)  &*(boxU)  &*(boxU)  &*(boxU)     \\
*(boxU) \mk &*(boxU)  \mk&*(boxU)  \mk&*(boxU) \mk &*(boxU)  \mk
\end{ytableau}}{
\begin{ytableau}
*(boxU)  \\
*(boxU)  \\
*(boxL)   & *(boxL)  & *(boxU) \\
*(boxU)  & *(boxL)  & *(boxU) & *(boxU)\\
*(boxL)  & *(boxL)  & *(boxU) & *(boxL) & *(boxU) \\
*(boxU)   & *(boxL)  & *(boxU) & *(boxU) & *(boxU) \\
*(boxU) \mk  & *(boxU) \mk & *(boxU) \mk& *(boxU) \mk& *(boxU) \mk\\
\end{ytableau}}.
\end{equation}

\end{example}

We can show the example of \eqref{ex:stanelystableaureworked} holds in general.

\begin{theorem} 
The Simple Windowing conjecture~\eqref{conj:simplewindowing} holds in the Maximal Filling Tableau case.
\end{theorem}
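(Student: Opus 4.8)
The plan is to reduce the Maximal Filling case of Window Lensing to the Locality Theorem~\eqref{thm:locality} (which is already proven via Bravi--Gandini), combined with a direct verification that Stanley's maximal filling rule~\eqref{eq:maxfilling} is compatible with de-windowing. Since $c_{\mu\nu}^\lambda=1$ in the Maximal Filling case, by Lemma~\eqref{lemma:schurlrwindow} we also have $c_{\mu_R,\nu}^{\lambda_R}=1$, so we are in the setting of Simple Lensing~\eqref{corr:simplelensing}: both $\jackjLR_{\mu\nu}^\lambda$ and $\jackjLR_{\mu_R,\nu}^{\lambda_R}$ are given by one-term rules, and we must show the identity of rules $\JackjLR_{\mu,\nu}^\lambda = \windfact_{\mu,R}^\lambda \times (\JackjLR_{\mu_R,\nu}^{\lambda_R})_R$ where $R = \meetjoin(\lambda/\mu)$.

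First I would establish that the skew LR-tableau $T$ of shape $\lambda/\mu$ and weight $\nu$ and the skew LR-tableau $T_R$ of shape $\lambda_R/\mu_R$ and weight $\nu$ \emph{coincide up to translation} (the support $\lambda/\mu$ lies inside $R$, and $\lambda_R/\mu_R$ is exactly that support re-centered), so that $T_R$ again has the maximal filling property and its column/row statistics $c_j, r_i$ agree with those of $T$ on the overlapping columns and rows. Next I would analyze the hook assignment $C_{\lambda,T}(b)$ of~\eqref{eq:maxfillingrule} for boxes $b\in\lambda$ lying \emph{outside} $R$. Such a box shares no box of $\lambda/\mu$ in its column, so $c_j = 0$ for that column, forcing the ``otherwise'' branch $C_{\lambda,T}(b) = \lh{\lambda}{b}$; similarly $b_* = b$ for such boxes since $c_j=0$. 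The same branch analysis applies to $b_*$ for boxes $b\in\mu$ outside $R$. Thus, after cancelling the common factors, the outside-$R$ contribution to~\eqref{eq:maxfilling} collapses to $\prod_{b\in\mu/R}\lh{\mu}{b} / \prod_{b\in\lambda/R}\lh{\lambda}{b}$ --- but this is \emph{not yet} the window factor $\windfact_R$, which has lower hooks only to the left of $R$, upper hooks below $R$, and neutral symbols elsewhere. The reconciliation is the standard observation already used in the proof of Theorem~\eqref{thm:locality}: for boxes $b$ below $R$ one has $\arm_\mu(b)=\arm_\lambda(b)$ while for boxes sharing neither row nor column with $R$ one has $\ulh{\mu}{b}=\ulh{\lambda}{b}$, so the lower-hook ratio equals the upper-hook ratio for boxes below $R$ and equals $1$ for the remaining boxes. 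Hence $\prod_{b\in\mu/R}\lh{\mu}{b}/\prod_{b\in\lambda/R}\lh{\lambda}{b} = \windfact_{\mu,R}^\lambda$ at the level of \emph{evaluations}; one must additionally check that this agrees with $\windfact_R$ at the level of hook \emph{symbols} up to the neutral-symbol ambiguity, which is immediate from the definition of $\windfact$.

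It then remains to match the inside-$R$ part. For a box $b = (j,i)\in\lambda\cap R$, I would show $C_{\lambda,T}(b) = C_{\lambda_R, T_R}(b_R)$ and $(b_*)_R = (b_R)_*$, which follows from the previous paragraph since the relevant statistics $c_j, r_i$ are preserved under the re-centering (columns and rows meeting $\lambda/\mu$ map bijectively to those of $\lambda_R/\mu_R$). Consequently $\prod_{b\in\lambda\cap R} C_{\lambda,T}(b) = (\prod_{b\in\lambda_R} C_{\lambda_R,T_R}(b))_R$ under de-windowing, and likewise for the $\mu$-factor and the $\nu$-factor (the $\nu$-boxes are untouched by windowing). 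Assembling the three pieces gives exactly $\jackjLR_{\mu,\nu}^\lambda = \windfact_{\mu,R}^\lambda \times (\JackjLR_{\mu_R,\nu}^{\lambda_R})_R$ as rules, which is the claim.

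\textbf{Main obstacle.} The substantive point is the bookkeeping of the hook-\emph{symbol} assignment outside $R$: Stanley's rule natively produces lower hooks everywhere outside $R$, whereas $\windfact_R$ prescribes lower hooks to the left, upper hooks below, and neutrals elsewhere. Proving these give the same rule requires carefully tracking which boxes have matching arm/leg statistics between $\mu$ and $\lambda$ --- exactly the case analysis ($R_{\mathrm{left}}$, $R_{\mathrm{below}}$, neither) appearing in the proof of Theorem~\eqref{thm:locality} --- and checking that the ``neutral'' boxes really do cancel so that the symbol ambiguity is harmless. A secondary subtlety is confirming that $T_R$ inherits the maximal filling property and the correct statistics from $T$; this should be routine but must be stated. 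Alternatively, one could bypass the symbol bookkeeping entirely by invoking Theorem~\eqref{thm:locality} to get $\jackjLR_{\mu,\nu}^\lambda = F_{\mu,R}^\lambda\cdot\jackjLR_{\mu_R,\nu}^{\lambda_R}$ at the level of evaluations, and then separately observe that both sides, being one-term $c=1$ rules built from Stanley's formula, assign hooks compatibly under de-windowing --- reducing the whole theorem to the translation-invariance of the maximal filling statistics, which I expect to be the cleanest route.
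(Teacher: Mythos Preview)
Your proposal has two genuine gaps, one of which is exactly the main difficulty the paper addresses.

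First, your claim that every box $b\in\lambda$ outside $R$ has $c_j=0$ is false: a box outside $R$ may lie in a row outside $R$ but a column inside $R$ (i.e.\ \emph{below} $R$), and then $c_j>0$ while $r_i=0$. For such boxes Stanley's rule~\eqref{eq:maxfillingrule} gives $r_i\le c_j$ with $c_j>0$, hence an \emph{upper} hook directly --- so the outside-$R$ assignment already matches $\windfact_R$ without any ``reconciliation''. Your reconciliation is also incorrect as stated: for a box below $R$ one has $\arm_\mu(b)=\arm_\lambda(b)$ but $\leg_\mu(b)\neq\leg_\lambda(b)$, so $\lh{\mu}{b}/\lh{\lambda}{b}\neq\uh{\mu}{b}/\uh{\lambda}{b}$ in general, and the two ratios are not interchangeable at the symbol level (which is what Window Lensing requires). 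The paper handles this part by a transposition-symmetry argument.

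Second, and more seriously, the identity $(b_*)_R=(b_R)_*$ that you claim for the inside-$R$ part is false. The shift $b\mapsto b_*=(j,i+c_j)$ in Stanley's rule moves $b$ up by $c_j$ rows; if the window removes a row $\tau$ with $r_\tau=0$ lying between $b$ and $b_*$, then windowing and shifting no longer commute. This is precisely the crux of the paper's proof: one must show that even when $(b_R)_*\neq (b_*)_R$, the hook \emph{choice} $d_{\lambda,T}$ at both positions is the same (namely $U$). The paper does this by arguing, via the maximal filling constraint, that for any box $z_*$ in the removed row $\tau$ with $c(z_*)>0$, one has $d_{\lambda,T}(z_*+(0,k))=U$ for all $0\le k\le c(z_*)$, so the discrepancy in position does not affect the assignment. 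Your proposal does not address this, and your alternative via Theorem~\eqref{thm:locality} does not apply either, since that theorem requires $R$ to be a single rectangle whereas $\meetjoin(\lambda/\mu)$ is generally a union of rectangles in the maximal filling case.
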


\begin{proof}
Firstly, we show that outside $R= \meetjoin(\lambda/\mu)$ the assignment rule~\eqref{eq:maxfilling} yields a factor of $\windfact_{R}$. We note that $R$ can be described as the union of all columns satisfying $c_j=0$ with all rows satisfying $r_i = 0$. For each box $b=(j,i)\in \lambda$ in a column with $c_j=0$, the formula~\eqref{eq:maxfillingrule} assigns $A(T,b)=\sL$. For such boxes we have $b_*=b$, and thus $b \in \mu$ is also assigned $\sL$. Thus each such column agrees with $\windfact_{R}$. 

Next we look at boxes $b=(j,i)$ in rows with $r_i=0$. We need to show that $A(T,b)= A(T,b_*)$, and that these are equal to $\sU$ if $c_j>0$. If $c_j$ is 0, then we see that $A(T,b)= A(T,b_*)=\sL$. Next, if $c_j>0$, we have $A(T,b)=\sU$. So we just need to check that $A(T,b_*)=\sU$. If $r_i=0$, then $r_{i+c_j}$ is at most $c_j$, and thus $A(T,b_*)=\sU$ and we are done. Thus, outside of R we assign hooks according to $\windfact_{R}$.

Next, we need to show that the assignment of hooks inside $R$ agrees with the same rule applied to $\lambda_R, \mu_R$. First we note that the maximal filling LR tableau agrees with its windowed version $T_R$. Hence the vectors of windowed row-column maximums $c^R, r^R$ , for the windowed LR tableau $T_R$ agree with the un-windowed $c,r$ however will all zeros removed. Thus, for each box $b' \coloneqq (j',i')$ in the windowed partitions $\lambda_R$, corresponding to the unwindowed box $b=(j,i)$ in $R$, we have the equivalent conditions $c^R_{j'} \leq r_{i'}^R \Leftrightarrow c_j \leq r_i$. Hence the windowed choice of hooks inside $\lambda_R$ agrees with the un-windowed choices inside $R \cap \lambda$, i.e. $A(T_R,b')=A(T,b)$.

%$d_{\lambda_R,T_R}(b')=d_{\lambda,T}(b)$, where $d_{\lambda,T} \in \{\sU,\sL\}$ represents the \emph{choice} of hook for $A(T,b)$ in formula~\eqref{eq:maxfillingrule}, and $D_{\mu,T}(b) \coloneqq d_{\lambda,T}(b_*)$ be the corresponding choices for $\mu$.

So, to complete the proof we have to show that the hook choices all boxes $b'=(j',i') \in \mu_R$ also agree with those for their un-windowed counterparts $b=(j,i) \in \mu$, that is, we need to show that 
%\begin{equation}\label{eq:Dmu}
%D_{\mu_R,T_R}(b')=D_{\mu,T}(b),
%\end{equation}
\begin{equation}\label{eq:Dmu}
A(T_R,b')=A(T,b),
\end{equation}
or equivalently %$d_{\lambda_R,T_R}((b')_*) = d_{\lambda,T}(b_*)$
$A(T_R,(b')_*) = A(T,b_*)$, where as before $b_* \coloneqq (j,i+c_j)$.

The complication is that we may have removed certain rows from $\mu$ that have $r_i=0$ (i.e, those in $R$), and so we have to check that these hook choices still agree after taking into consideration the vertical shift by $c_j$. In other words, it is \emph{not} the case that $(b')_*=(b_*)' $ for all $b$. If indeed this was so, equation~\eqref{eq:Dmu} would follow immediately, since then we would have %$d_{\lambda_R,T_R}((b')_*) =d_{\lambda_R,T_R}((b_*)') = d_{\lambda,T}(b_*)$. 
$A(T_R,(b')_*) =A(T_R,(b_*)') = A(T,b_*)$.
This complication is demonstrated in the following example of $\mu=44441, \nu = 431, \lambda = 5554321$.
\ytableausetup{boxsize=1.0em}
\[ T := \begin{ytableau}
 2\\
1& 2\\
& 1 &1 \\
&  & & \\
&  & & & 3 \\
&  & & & 2 \\
&  & &  &1 \\
\end{ytableau}, \quad T_R := \begin{ytableau}
 2\\
1& 2\\
& 1 &1 \\
&  & &  3 \\
&  & &  2 \\
&  & &  1 \\
\end{ytableau} \]
 \begin{equation*}
\ytableausetup{boxsize=1.0em}
\frac{
\begin{ytableau}
*(boxU) a\\
*(boxU) b& *(boxU) &*(boxU) &*(boxL)\\
*(boxU) c & *(boxU) &*(boxU)& *(boxL)\\
*(boxU) d& *(boxU) &*(boxL) &*(boxL) & \none \\
*(boxL) e& *(boxL) &*(boxL) &*(boxL) & \none \\
\end{ytableau}
}{
\begin{ytableau}
*(boxU)a'\\
*(boxU) b_*&*(boxU)  \\
*(boxU) c_*&  *(boxU) & *(boxU)  \\
*(boxU) d_*& *(boxU) &*(boxU) & *(boxL) \\
*(boxL) e_*& *(boxL) &*(boxL) &*(boxL) & *(boxU)  \\
*(boxU)&*(boxU)  & *(boxL)& *(boxL) &*(boxU)   \\
*(boxU)&*(boxU)  & *(boxU)& *(boxL) &*(boxU)   \\
\end{ytableau}} \leftrightarrow
\frac{
\begin{ytableau}
*(boxU)a'\\
*(boxU)c'&*(boxU)  &*(boxU)  \\
*(boxU) d'& *(boxU) & *(boxL)& \none \\
*(boxL) e'& *(boxL) & *(boxL)& \none \\
\end{ytableau} }{
\begin{ytableau}
\none \\
*(boxU)a'_*  \\
*(boxU)f & *(boxU) \\
*(boxU)g& *(boxU)  &*(boxU) \\
*(boxL)e'_*&*(boxL) &*(boxL) &  *(boxU) \\
*(boxU)& *(boxU) & *(boxL)& *(boxU)  \\
*(boxU)& *(boxU) & *(boxU) & *(boxU)  \\
\end{ytableau}}. \\
\end{equation*}
In this example, we find the following box correspondences,
\[ (a')_*=(a_*)', \quad  f= (b_*)'=(c')^*, \quad g=(c_*)' = (d')_* \quad (e')_*=(e_*)'. \]
In $\mu$ the row containing $b$ is removed, in $\lambda$ the row containing $d_*$ is removed. Here we see that the two boxes $c,d$ in $\mu$, ie, those that are in rows at up to $c_j$ below the row that is being removed (the one containing $b$), need to be shifted up by 1 row inside $\lambda$ for the correspondence to work out. 

We will show that indeed the hook choices are consistent when one removes only a single row, and then by iteration the result will hold for all $R$. So let $R$ be the complement of a single row, $\tau$. All the considerations below are for boxes in a column with $c_j >0$.

We have shown that for boxes $x \in \mu$ at most $c_j$ rows beneath the row $\tau$ in $\mu$, which we say are \emph{close} to $\tau$, we have
\[ (x_* + (0,1))' = (x')_*, \]
whereas for all other boxes one can easily show $(x_*)'=(x')_*$.
We have to check now that the rule~\eqref{eq:maxfillingrule} assigns consistent hooks for $x$ close to $\tau$ across the four boxes $x, x', (x')_*$ and $x_* + (0,1)$.

%Here we have $e = (c')_* = (b_*)'$, with $b_*= c_*+(0,1)$ corresponding to shifting up by the one row (which contains $c_*$) that is inside $R$. In the first part, we proved that $d_{\lambda,T}(b_*) = d_{\lambda_R,T_R}(e)$. 

%Thus in this example we are required to show that $d_{\lambda,T}(c_*) = d_{\lambda,T}(b_*)$, in order for this choice to be consistent.

For any $x$ that is between $1$ and $c(x)$ rows below $\tau$ in $\mu$, $x_* = x+(0,c(x))$ is at most $c(x)-1$ rows \emph{above} the row $\tau$ in $\lambda$. Then let $y_*=x_*+(0,1)$ be the box that satisfies $(y_*)' = (x')_*$. We then know that $y_*$ is at most $c(x)$ rows above the row $\tau$ in $\lambda$.

Next, consider $z_* \in \lambda$ in the row $\tau$ in $\lambda$, for which we have $c(z_*)>0$ and $r(z_*) = 0$. We will show that $A(T,z_*) = \sU = A(T,z_*+(0,k) )$ for all $k \leq c(z_*)$. In particular this applies to $y_*$ from the previous paragraph. We recall that the maximal filling condition forces the the entries of the LR tableau to be increasing incrementally as we move up columns, and so because we know that $r(z_*)=0$ we must have $r(z_* + (0,k))\leq k$. Thus, for $k \leq c(z_*)$, we find $r(z_* + (0,k)) \leq c(z_*) = c(z_* + (0,k)) > 0$, and so $A(T,z_*+(0,k) ) = \sU$.

Thus for $x$ close to $\tau$, we have shown the following agreement of $\mu$-hook assignments:
%\begin{eqnarray*}
% D_{\mu,T}(x) &=& d_{\lambda, T}(x_*) \\&=& d_{\lambda, T}(x_*+(0,1)) \\&=& d_{\lambda_R, T_R}((y_*)') \\&=& d_{\lambda_R, T_R}((x')_*) \\&=&  D_{\mu_R,T_R}(x')
% \end{eqnarray*}
 \begin{eqnarray*}
A(T,x_*) &=& A(T,x_*+(0,1)) \\&=& A(T_R,(y_*)') \\&=& A(T_R,(x')_*)
 \end{eqnarray*}
For $x$ not close to $\tau$, we have $(x')_* = (x_*)'$ and so the same agreement follows directly. Thus the theorem is proven.
\end{proof}

\subsection{Positivity Conjecture}

With the constructions of the previous sections, we can propose a sharpening of the Stanley Conjecture~\eqref{conj:stanley}.

\begin{conjecture}[Window Positivity]\label{conj:windowpositivity} Let $\mu \subset \lambda$ be such that $\lambda/\nu \subset R$, for a window $R$. In terms of the positivity of Conjecture~\eqref{conj:stanley}, the factor $F_{R}$ accounts for all boxes outside $R$ in the following precise sense:
\begin{equation} \jackjLR_{\mu\nu}^\lambda \cdot \left( F_{\mu,R}^\lambda\right)^{-1} \cdot j_{\lambda,R} \in \setZ_{\geq0}[\alpha], \end{equation}
where $j_{\lambda,R} \coloneqq \prod_{b \in \lambda \cap R} \uh{\lambda}{b} \lh{\lambda}{b} $. 
\end{conjecture}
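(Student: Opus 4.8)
The plan is to feed the Window Lensing Conjecture~\eqref{conj:windowlensing} into a divisibility-and-positivity refinement of Stanley's Conjecture~\eqref{conj:stanley}, and then to isolate what is left as a ``monotonicity under de-windowing'' statement. First reduce to the minimal window $R_0\coloneqq\meetjoin(\lambda/\mu)$: if $R_0\subseteq R$, the compatibility lemma above identifies $F_{\mu,R}^{\lambda}$ with $F_{\mu,R_0}^{\lambda}$ up to the hook ratios of the boxes of $\lambda\cap(R\setminus R_0)$ (which equal those of $\mu$ there, since the shapes agree off $R_0$), while $j_{\lambda,R}=j_{\lambda,R_0}\cdot\prod_{b\in\lambda\cap(R\setminus R_0)}\uh{\lambda}{b}\,\lh{\lambda}{b}$; these two corrections multiply to a product of honest hook lengths, hence an element of $\setZ_{\geq0}[\alpha]$, so the case $R=R_0$ suffices.

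Assume Conjecture~\eqref{conj:windowlensing} --- this is the natural home of Conjecture~\eqref{conj:windowpositivity} --- and let $\JackjLR_{\mu_{R_0},\nu}^{\lambda_{R_0}}=\sum_{\Phi}c_\Phi\,\bm{G}_\Phi$. Window lensing provides a rule for $\JackjLR_{\mu\nu}^{\lambda}$ every channel of which uses the $F_{R_0}$-choice ($L$, $U$, or neutral) on every box outside $R_0$; expanding $\jackjLR_{\mu\nu}^{\lambda}\cdot j_\lambda$ accordingly, those boxes contribute a common factor $\prod_{b\in\lambda\setminus R_0}h^{\bullet}_\mu(b)\,h^{\bar\bullet}_\lambda(b)$ to every term, and division by it leaves exactly
\[
\jackjLR_{\mu\nu}^{\lambda}\cdot\bigl(F_{\mu,R_0}^{\lambda}\bigr)^{-1}\cdot j_{\lambda,R_0}\;=\;\sum_{\Phi}c_\Phi\prod_{b\in\mu\cap R_0}h^{\Phi_b}_\mu(b)\prod_{b\in\nu}h^{\Phi_b}_\nu(b)\prod_{b\in\lambda\cap R_0}h^{\overline{\Phi_b}}_\lambda(b),
\]
a $\setZ$-combination of products of honest hook lengths of $\mu,\nu,\lambda$. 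That the left side is a polynomial at all --- the divisibility half of the conjecture --- is thereby established; what remains is its positivity.

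To prove positivity I would compare with the same rule evaluated inside the window. The Stanley Conjecture~\eqref{conj:stanley} for the smaller windowed data $(\mu_{R_0},\nu,\lambda_{R_0})$ asserts
\[
\jackjLR_{\mu_{R_0},\nu}^{\lambda_{R_0}}\cdot j_{\lambda_{R_0}}\;=\;\sum_{\Phi}c_\Phi\prod_{b\in\mu_{R_0}}h^{\Phi_b}_{\mu_{R_0}}(b)\prod_{b\in\nu}h^{\Phi_b}_\nu(b)\prod_{b\in\lambda_{R_0}}h^{\overline{\Phi_b}}_{\lambda_{R_0}}(b)\;\in\;\setZ_{\geq0}[\alpha],
\]
and the displayed target is obtained from this by de-windowing, i.e.\ by replacing each hook length $h^{A}_{\lambda_{R_0}}(b)$ of a box $b$ with $h^{A}_{\lambda}(b^{R_0})$ (and likewise for $\mu$). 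Since de-windowing can only lengthen arms and legs, $h^{A}_{\lambda}(b^{R_0})-h^{A}_{\lambda_{R_0}}(b)$ is a linear form in $\alpha$ with non-negative integer coefficients. Thus everything reduces to the assertion: \emph{in this particular $\setZ$-combination of products of positive linear forms, replacing each factor by that factor plus a non-negative linear form keeps the combination in $\setZ_{\geq0}[\alpha]$.}

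This last step is the main obstacle, and it is exactly what separates Conjecture~\eqref{conj:windowpositivity} from Conjecture~\eqref{conj:stanley}: the implication is false for arbitrary $\setZ$-combinations of products of positive linear forms, so any proof must use the combinatorics of the $c_\Phi$. I would attempt the enlargement one box at a time: writing the combination as $A(\alpha)\,h+B(\alpha)$ with $h$ the hook being enlarged, one wants $A\,h'+B\in\setZ_{\geq0}[\alpha]$ for the longer hook $h'$, which holds as soon as $A\in\setZ_{\geq0}[\alpha]$ --- a positivity one might hope to read off from the Strong Stanley structure~\eqref{conj:strongstanley} or from a positivity of channel coefficients --- but in general $A$ is not positive and a finer, presumably inductive argument is needed, likely bootstrapping off Conjecture~\eqref{conj:windowlensing} for sub-windows. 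The obstacle does evaporate in the cases already settled: when $R_0$ is a single rectangle, de-windowing preserves every hook, so $j_{\lambda,R_0}=j_{\lambda_{R_0}}$ and there is nothing to enlarge (consistently with Theorem~\eqref{thm:locality}); and for maximal-filling tableaux the windowed rule is a single term, which stays a product of hook lengths after enlargement and is therefore trivially in $\setZ_{\geq0}[\alpha]$.
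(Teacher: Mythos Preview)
The paper does \emph{not} prove this statement: it is stated as a conjecture and supported only by the two worked examples that follow it. There is therefore no ``paper's own proof'' to compare against, and what you have written is not a proof either --- which you acknowledge openly.

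Your reduction to the minimal window $R_0$ and your derivation of the identity
\[
\jackjLR_{\mu\nu}^{\lambda}\cdot\bigl(F_{\mu,R_0}^{\lambda}\bigr)^{-1}\cdot j_{\lambda,R_0}
=\sum_{\Phi}c_\Phi\prod_{b\in\mu\cap R_0}h^{\Phi_b}_\mu(b)\prod_{b\in\nu}h^{\Phi_b}_\nu(b)\prod_{b\in\lambda\cap R_0}h^{\overline{\Phi_b}}_\lambda(b)
\]
are both correct, and the observation that this gives \emph{polynomiality} (membership in $\setZ[\alpha]$) conditional on Conjectures~\ref{conj:generalstructure2} and~\ref{conj:windowlensing} is a genuine and useful remark that the paper does not make explicit. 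But note that you are stacking conjectures: Window Lensing, General Structure, and --- when you invoke positivity for the windowed problem --- Stanley's Conjecture~\ref{conj:stanley} itself for $(\mu_{R_0},\nu,\lambda_{R_0})$. None of these is proved, so even your ``divisibility half'' is conditional, not established.

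You have correctly located the obstruction: the passage from $h^{A}_{\lambda_{R_0}}(b)$ to $h^{A}_{\lambda}(b^{R_0})$ is a non-negative shift of each linear factor, but positivity of a $\setZ$-combination of products of positive linear forms is not preserved under such shifts in general. Your proposed one-hook-at-a-time induction requires $A\in\setZ_{\geq0}[\alpha]$ at each step, and as you say, there is no reason to expect this without control on the $c_\Phi$ --- control that the paper explicitly disclaims (cf.\ the remark after Conjecture~\ref{conj:generalstructure}). So the proposal is an honest outline of why the conjecture is plausible and where it is hard, not a proof; that matches the paper's own stance.
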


Note that $F_{R}$ is a rational function in $\alpha$ with numerator and denominator each a positive polynomial of equal degrees, and $j_{\lambda,R}$ is a positive polynomial of degree $2|\lambda \cap R|$, and so conjecture~\eqref{conj:windowpositivity} claims a stronger positivity than that of Conjecture~\eqref{conj:stanley}.

\begin{example} To demonstrate conjecture \eqref{conj:windowpositivity}, we take $\mu = 654321$, $\nu=211$, $\lambda=664432$. We find
\[ \jackjLR_{\mu,\nu}^{\lambda} = \frac{\alpha^2 (2+3\alpha) f(\alpha)}{2700 (1+\alpha)^{12} (3+2\alpha)(4+3\alpha)} \]
where $f(\alpha)$ is an irreducible degree 8 positive polynomial.
We take the minimal window $R$, 
\begin{equation*}
\ytableausetup{boxsize=0.7em}R \coloneqq
\begin{ytableau}
{}& *(boxS)   & *(boxS)  & *(boxS)& & *(boxS) \\
& *(boxS)   & *(boxS)  & *(boxS)& & *(boxS) \\
 & *(boxS)  & *(boxS) & *(boxS)& & *(boxS)\\
  &  &  &&  &    \\
& *(boxS)   &*(boxS)   & *(boxS) & & *(boxS) \\
  &   &  & & & \\
\end{ytableau}  \\
\end{equation*}
for which we find

\begin{equation*}
{\ytableausetup{boxsize=0.7em}
\windfact_{\mu,R}^\lambda = \frac{\begin{ytableau}
*(boxL)  \\
*(boxL)& *(boxS)   \\
*(boxL) & *(boxS)  & *(boxS) \\
 *(boxG)  & *(boxU) & *(boxU)  & *(boxU)  \\
*(boxL) & *(boxS)   &*(boxS)   & *(boxS) & *(boxL) \\
 *(boxG)  & *(boxU)  & *(boxU) & *(boxU)& *(boxG) & *(boxU)  \\
\end{ytableau}}{
\begin{ytableau}
*(boxL) & *(boxS)     \\
*(boxL)& *(boxS)   & *(boxS)   \\
*(boxL) & *(boxS)  & *(boxS) & *(boxS)\\
 *(boxG)  & *(boxU) & *(boxU)  & *(boxU)  \\
*(boxL) & *(boxS)   &*(boxS)   & *(boxS) & *(boxL) & *(boxS) \\
 *(boxG)  & *(boxU)  & *(boxU) & *(boxU)& *(boxG) & *(boxU)  \\
\end{ytableau}}
}, \quad F_{\mu,R}^\lambda = \frac{\hkl{1}{0}\hkl{2}{1}\hkl{3}{2}\hkl{5}{4}\hkl{2}{3}\hkl{1}{2}\hkl{0}{1}\hkl{4}{5}\hkl{3}{4}\hkl{2}{3}\hkl{1}{0}\hkl{0}{1} }{ \hkl{1}{1}\hkl{2}{2}\hkl{3}{3}\hkl{5}{5}\hkl{3}{3}\hkl{2}{2}\hkl{1}{1}\hkl{5}{5}\hkl{4}{4}\hkl{3}{3}\hkl{1}{1}\hkl{1}{1} } 
\end{equation*}
where $\hkl{a}{b} \coloneqq a+b\alpha$. We also have
\[ j_{\lambda,R} = (\!\hkl{0}{1}\hkl{1}{0})^4(\!\hkl{1}{2}\hkl{2}{1})^2\hkl{2}{3}\hkl{3}{2}\hkl{4}{5}\hkl{5}{4}\hkl{3}{4}\hkl{4}{3}\hkl{2}{3}\hkl{3}{2}. \]
We check the window positivity conjecture
\[ \jackjLR_{\mu,\nu}^{\lambda} \cdot \left(F_{\mu,R}^\lambda\right)^{-1} \cdot j_{\lambda,R} = 4\alpha^4(2+\alpha)(1+2\alpha)(2+3\alpha) f(\alpha) \in \setZ_{\geq0}[\alpha]. \]
Note that the Stanley conjecture \eqref{conj:stanley} gives the weaker positivity property
\[ \jackjLR_{\mu,\nu}^{\lambda}\cdot  j_{\lambda} = (\text{ degree 32 non-negative polynomial in } \alpha \, ) f(\alpha) \in \setZ_{\geq0}[\alpha]. \]
\end{example}

\begin{example}[P. Hanlon's example {\cite[p114]{Stanley:1989}}]\label{example:hanlonpositivity}
Take $\mu = 31$, $\nu=21$, $\lambda=421$. Here we have $c=2$, and the LR coefficient is given by
\[ \jackjLR_{31,21}^{421} = \frac{\alpha( 9+97\alpha+294\alpha^2 + 321\alpha^3 + 131\alpha^4+12\alpha^5 )}{3(1+\alpha)^3(2+\alpha)(1+2\alpha)^2(1+3\alpha)} \]
We take the minimal $R= \meetjoin(\lambda/\mu)$, which gives
\[ F_{R} = \frac{
\ytableausetup{boxsize=0.8em}
\begin{ytableau}
*(boxS)  \\
*(boxS) & *(boxS) & *(boxL) \sL    \\
\end{ytableau}
}{
\begin{ytableau}
*(boxS) \\
*(boxS) & *(boxS)\\
*(boxS) & *(boxS) & *(boxL) \sL & *(boxS)   \\
\end{ytableau}} = \frac{1}{1+\alpha}.
\]
To verify conjecture \eqref{conj:windowpositivity}, we check
\[  \jackjLR_{31,21}^{421} \cdot F_{R}^{-1}\cdot j_{\lambda,R}=  4\alpha^4( 9+97\alpha+294\alpha^2 + 321\alpha^3 + 131\alpha^4+12\alpha^5 ), \]
with
\begin{eqnarray*} 
j_{\lambda,R} &=& \hkl{0}{1}^3\hkl{1}{2}\hkl{2}{1}\hkl{1}{3}\hkl{2}{2}\hkl{2}{3}\hkl{3}{3} \\
&=& 12 \,\alpha^3(1+2\alpha)^2(2+\alpha)(1+3\alpha)(1+\alpha)^2.  
\end{eqnarray*}
We revisit this example in \eqref{ex:hanloncalc}.
\end{example}

\section{The action of $J_{21}$}\label{section:J12action}
In \cite{Stanley:1989}, the action of $J_{21}$ on the ring of symmetric funcitons (in the Jack basis) was partially determined.

\begin{theorem}[Stanley {\cite[p114]{Stanley:1989}}]
If $c_{\mu,21}^{\lambda}=1$, then the strong Stanley conjecture holds for $\jackjLR_{\mu,21}^{\lambda}$.
\end{theorem}
No explicit Stanley Diagram is provided in that work, however is it deducible from a given formula. Using the windowing conjecture, we can address the above problem in the following way

\begin{corollary}
Assume the Jack Windowing conjecture holds. If $c_{\mu,21}^{\lambda}=1$, then the problem of computing $\jackjLR_{\mu,21}^{\lambda}$ is reducible via windowing of $\lambda$ and $\mu$ to one of the following 3 cases (up to transposition).
\begin{itemize}
\item $\lambda/\mu \cong \{21\}$. By the locality property \eqref{thm:locality}, this reduces to $\JackjLR_{\emptyset,21}^{21}$.
\item $\lambda/\mu \cong \{1,1\} \cup \{1\}$. This reduces to that of $\JackjLR_{1,21}^{211}$. 
\item $\lambda/\mu \cong \{2\} \cup \{1\}$. This reduces to that of $\JackjLR_{2,21}^{32}$.  
\end{itemize}
If $c_{\mu,21}^{\lambda}=2$, then the problem is reducible via windowing of $\lambda$ and $\mu$ to
\begin{itemize}
\item
$\lambda/\mu \cong \{1\} \cup \{1\} \cup \{1\}$. In this case, the problem reduces to $\JackjLR_{21,21}^{321}$.
\end{itemize}
\end{corollary}

For example, in the case where $\lambda/\mu \cong \{2\} \cup \{1\}$, we find a solution of the form
\begin{equation}\label{eq:21eq}
\ytableausetup{boxsize=0.8em}
 \JackjLR_{\mu,21}^{\lambda} = \frac{\begin{ytableau}
\none  &   \\
\none  &  *(boxL) \\
\none  & & *(boxU)  & *(boxU) \\
\none  & & *(boxU)  & *(boxU)  &  \\
\none  & *(boxL) & *(boxS)  & *(boxS)  & *(boxL) \\
\none  & & *(boxU)  & *(boxU)  &  & *(boxU) &    \\
\none[\mu] &  \none & \none  & \none & \none
\end{ytableau} \,\, \begin{ytableau}
\none\\
\none\\
\none\\
\none \\
\none & *(boxS)  \\
\none & *(boxS)   & *(boxS)   \\
\none & \none & \none 
\end{ytableau}
}{\begin{ytableau}
\none  &   \\
\none  &  *(boxL) & *(boxS)  & *(boxS) \\
\none  & & *(boxU)  & *(boxU) \\
\none  & & *(boxU)  & *(boxU)  &  \\
\none  & *(boxL) & *(boxS)  & *(boxS)  & *(boxL)& *(boxS) \\
\none  & & *(boxU)  & *(boxU)  &  & *(boxU) &    \\
\none[\lambda] &  \none & \none  & \none & \none
\end{ytableau}} \circ \JackjLR_{2,21}^{32}
\ytableausetup{boxsize=0.8em}
\end{equation}

For the rest of of this section we turn our attention to the $c=2$ case of $\lambda/\mu \cong \{1\} \cup \{1\} \cup \{1\}$.

\subsection{ The $\lambda/\mu \cong \{1\} \cup \{1\} \cup \{1\}$ case }

%We consider the case of 
%\[ \lambda/\mu \cong \{1\} \cup \{1\} \cup \{1\}, \,\,\nu=\{21\}.\] 

An example of this is given by $\lambda = 98766532$, $\mu = 97765522$. The Windowing conjecture predicts the existence of a solution of the form

\begin{equation}\label{eq:c2cases1}
\ytableausetup{boxsize=0.8em}
 \JackjLR_{\mu,21}^{\lambda} = \frac{\begin{ytableau}
\none &   	     &   	     \\
\none & *(boxL)  & *(boxL) & \none[*] \\
\none & 		 & 		   & *(boxU)&			 & \\
\none & *(boxL)  & *(boxL) & *(boxS)&  *(boxL) &  *(boxL)  & \none[*] \\
\none & 		 & 		   & *(boxU)&			 & 			 & *(boxU) \\
\none & 		 & 		   & *(boxU)&			 & 			 & *(boxU)  &  \\
\none & *(boxL)  & *(boxL) & *(boxS)&  *(boxL) &  *(boxL)  & *(boxS)  & *(boxL)  & \none[*] \\
\none &   		 &   	   & *(boxU)&			 & 			 & *(boxU)  &  & *(boxU) &    \\
\none[\mu] &  \none & \none  & \none & \none
\end{ytableau} \,\, \begin{ytableau}
\none\\
\none\\
\none\\
\none \\
\none \\
\none \\
\none & *(boxS)  \\
\none & *(boxS) & *(boxS)   \\
\none & \none & \none 
\end{ytableau}
}{\begin{ytableau}
\none &   	     &   \\
\none & *(boxL)  & *(boxL) & *(boxS) *  \\
\none & 		 & 		 & *(boxU)&			 & \\
\none & *(boxL)  & *(boxL) & *(boxS)&  *(boxL) &  *(boxL)  & *(boxS) * \\
\none & 		 & 		 & *(boxU)&			 & 			 & *(boxU) \\
\none & 		 & 		 & *(boxU)&			 & 			 & *(boxU)  &  \\
\none & *(boxL)  & *(boxL) & *(boxS)&  *(boxL) &  *(boxL)  & *(boxS)  & *(boxL)& *(boxS) * \\
\none &   		 &   		 & *(boxU)&			 & 			 & *(boxU)  &  		 & *(boxU) &    \\
\none[\lambda] &  \none & \none  & \none & \none
\end{ytableau}} \circ \JackjLR_{21,21}^{321}
\ytableausetup{boxsize=0.8em}
\end{equation}

The most generic case can be described by six integers $(m_0,m_1,m_2,n_0,n_1,n_2)$ and 4 auxiliary partitions $\sigma_i, i=1,\ldots4$, which have a window $R$ described by the factor
\begin{equation}\label{eq:4paramwindowgeneral}
\ytableausetup{boxsize=1.0em}
 \windfact_{\mu,21}^{\lambda} = \frac{\begin{ytableau}
\none  & \sigma_0  \\
\none  & *(boxL)  \\
\none[m_2\,]  & &*(boxU) & \sigma_1 \\
\none         & *(boxL)&*(boxS) a_1 &*(boxL) \\
\none[m_1\,]  & &*(boxU)  & *(boxG)  & *(boxU) & \sigma_2 \\
\none         & *(boxL)&*(boxS) a_2 &*(boxL)  &*(boxS)  a_3& *(boxL)   \\
\none[m_0\,]  & & *(boxU) &  & *(boxU)& &*(boxU) & \sigma_4 \\
\none[\mu]    & \none[n_0] &  \none & \none[n_2]  & \none & \none[n_1]
\end{ytableau} \,\, \begin{ytableau}
\none\\
\none \\
\none & *(boxS) b_1  \\
\none & *(boxS) b_2  & *(boxS) b_3  \\
\none[\nu] & \none & \none 
\end{ytableau}
}{\begin{ytableau}
\none 		 &	\sigma_0 \\
\none 		 &	*(boxL)& *(boxS) c_1  \\
\none[m_2\,] &  &*(boxU)& \sigma_1\\
\none 		 &	*(boxL)& *(boxS) c_2 &*(boxL)  & *(boxS) c_3  \\
\none[m_1\,] & 	&*(boxU) & *(boxG) &*(boxU) & \sigma_2  \\
\none 		 &	*(boxL)& *(boxS) c_4 &*(boxL)  &*(boxS)  c_5   & *(boxL)  & *(boxS) c_6 \\
\none[m_0\,] &	& *(boxU)   &   &  *(boxU)     &   & *(boxU) & \sigma_4 \\
\none[\lambda] &\none[n_0]	& \none  & \none[n_2] & \none & \none[n_1] & \none
\end{ytableau}}
\end{equation}

The central question of the remainder of this present work is: 
{\bf Does there exist a single Rule $\JackjLR_{21,21}^{321}$ that solves this entire family}, i.e.
\begin{equation}\label{eq:321fameq}
\jackjLR_{\mu,21}^{\lambda} / F_{\mu,21}^{\lambda} = [\JackjLR_{21,21}^{321}]_R
\end{equation}

Note that the right hand side of \eqref{eq:321fameq} is manifestly independent of $\sigma_i$ and $n_0,m_0$.
%First, we note that the variables $m_0, n_0$ and partitions $\sigma_0, \sigma_4$ are factored out by the locality property \ref{thm:locality}. 
Thus, the generic case of \eqref{eq:4paramwindowgeneral} reduces to a family of partitions $\mu',\lambda'$  shaped by 4-parameters ${\bf{v}} = (m_1,n_1,m_2,n_2)$, described by the following window factor

\begin{equation}\label{eq:4paramwindow}
\ytableausetup{boxsize=1.1em}
 \windfact_{R({\bf{v}})} = \frac{\begin{ytableau}
\none[m_2\,]   &*(boxU)  \\
\none   &*(boxS) a_1 &*(boxL) \\
\none[m_1\,]   &*(boxU)  & *(boxG)  & *(boxU)  \\
\none &   *(boxS) a_2 &*(boxL)  &*(boxS)  a_3& *(boxL)   \\
\none[\mu'] &  \none & \none[n_2]  & \none & \none[n_1]
\end{ytableau} \,\, \begin{ytableau}
\none\\
\none \\
\none & *(boxS) b_1  \\
\none & *(boxS) b_2  & *(boxS) b_3  \\
\none[\nu] & \none & \none 
\end{ytableau}
}{\begin{ytableau}
\none & *(boxS) c_1  \\
\none[m_2\,]   &*(boxU)\\
\none & *(boxS) c_2 &*(boxL)  & *(boxS) c_3  \\
\none[m_1\,]  &*(boxU) & *(boxG) &*(boxU)  \\
\none & *(boxS) c_4 &*(boxL)  &*(boxS)  c_5   & *(boxL)  & *(boxS) c_6 \\
\none[\lambda'] & \none  & \none[n_2] & \none & \none[n_1] & \none
\end{ytableau}}
\end{equation}
In the following sense
\begin{equation}
 \jackjLR_{\mu,21}^{\lambda} / F_{\mu,21}^{\lambda} = \jackjLR_{\mu', 21}^{\lambda'} / F_{R({\bf{v}})}.
\end{equation}
So our main question becomes: does there exist a single rule $\JackjLR_{21,21}^{321}$ such that
\begin{equation}\label{eq:equationsfamily}
\JackjLR_{\mu,21}^{\lambda}({\bf{v}}) = \windfact_{R({\bf{v}})} \circ (\JackjLR_{21,21}^{321})_{R({\bf{v}})} 
\end{equation}
is a Rule for $\jackjLR_{\mu,21}^{\lambda}({\bf{v}})$ for all ${\bf{v}}$. Such a rule would then give a full solution to the action of $J_{21}$, i.e. \eqref{eq:321fameq}.

We consider the equation \eqref{eq:equationsfamily} an infinite family of linear equations with rational coefficients for the $2^{12}$ variables $c_\bmD \in \BZ$, in $ \JackjLR_{21,21}^{321} = \sum_{\bmD} c_{\bmD}\cdot \bmD$ as follows. The family is infinite as we vary over the windows $R_{({\bf{v}})}$ of \eqref{eq:4paramwindow} with parameters  ${\bf{v}}=(m_1,n_1,m_2,n_2) \in \BZ_{\geq 0}^4$, and then we take the residues as functions of $\alpha$ at all of the possible poles of the equation. Specifically, our equations are

\begin{equation}\label{def:Zfamily}
\mathcal{Z} := \big\{ \Res_{\alpha\to z_b^\sB}^{(k)}\left( \jackjLR_{\mu\nu}^{\lambda}/F_{R} - \sum_{\bmD} c_{\bmD} [\bmD]_R \right) =0 \big \}_{A \in \mathcal{A}}
\end{equation}
where the range of parameters $({\bf{v}},z_b^\sB,k) =: A \in \mathcal{A}$ is given by 
\begin{itemize}
\item ${\bf{v}}=(m_1,n_1,m_2,n_2) \in \BZ_{\geq 0}^4 $ are the window parameters, i.e. the parameters of the relative positions of the components of $\lambda/\mu$.
\item According to the Window Positivity conjecture \ref{conj:windowpositivity} all the possible poles of $\jackjLR_{\mu\nu}^{\lambda}/F_{R} $ are given by the zeros of $j_{\lambda,R}$, that is, a zero of any one of the $h^\sA_{\lambda}$ factors, 
\begin{equation}
\{ z_b^\sB = - \tfrac{leg_{\lambda({\bf{v}})}(b^{R({\bf{v}})})+\delta_\sB^\sL}{arm_{\lambda({\bf{v}})}(b^{R({\bf{v}})})+\delta_\sB^\sU} : b\in \{321\}, \sB \in \{\sU,\sL\}  \}.
\end{equation}
Clearly, there are at most 12 possible zeros $z_b^\sB $ per value of ${\bf{v}}$.
\item $1 \leq k \leq ord_{z_b^\sB}(j_{\lambda,R})$ ranges up to the order of the zero in $j_{\lambda,R}$.

\end{itemize}

For example, for $g_{21,21}^{321} = \frac{6\alpha(2+11\alpha+2\alpha^2)}{(3+2\alpha)(2+3\alpha)(1+2\alpha)(2+\alpha)}$, the possible poles are 
\[ (z_b^\sB)_{ord} \in \{(-\tfrac{3}{2})_1, (-\tfrac{2}{3})_1, (-2)_2, (-\tfrac{1}{2})_2, (0)_3\}.\]

\subsubsection{The Solution} As the system of equations \eqref{def:Zfamily} is highly over-constrained, it is surprising that a solution exists, and even moreso that one exists over $\BZ$. The following Stanley Sum was found using Mathematica to solve the corresponding linear system of $\sim 15000$ equations (over a range of $\bf{v}$) in the $2^{12}$ variables $c_{\bmD}$.

\begin{conjecture}\label{conj:generalsolution}
The following Stanley sum $\JackjLR_{21,21}^{321} = \sum_{\bmD} c_{\bmD}\cdot \bmD$ solves the equations \eqref{def:Zfamily} for all ${\bf{v}} \in \BZ^4_{\geq 0}$ and has weight 0, 

%Organized into orbits of the symmetry of $H_{21,21}^{21}$.

\begin{eqnarray}\label{eq:generalsolution}
\ytableausetup{boxsize=0.4em}
\JackjLR_{21,21}^{321} &= & 7 \left( \frac{ \begin{ytableau}
*(boxU)  \\
*(boxU)    &  *(boxU) 
\end{ytableau}\,\,\begin{ytableau}
*(boxU)   \\
*(boxU)   &  *(boxU) 
\end{ytableau} }{ \begin{ytableau}
*(boxU) \\
*(boxU)   & *(boxL)  \\
*(boxU)  & *(boxU)   & *(boxL) 
\end{ytableau} } \right) \quad - 2 \left( \frac{ \begin{ytableau}
*(boxU)  \\
*(boxU)    &  *(boxU) 
\end{ytableau}\,\,\begin{ytableau}
*(boxU)   \\
*(boxU)   &  *(boxU) 
\end{ytableau} }{ \begin{ytableau}
*(boxU) \\
*(boxU)   & *(boxU)  \\
*(boxU)  & *(boxU)   & *(boxL) 
\end{ytableau} } \right) \\
&& +1 \left(  \frac{ \begin{ytableau}
*(boxU)  \\
*(boxL)    &  *(boxU) 
\end{ytableau}\,\,\begin{ytableau}
*(boxU)   \\
*(boxU)   &  *(boxU) 
\end{ytableau} }{ \begin{ytableau}
*(boxU) \\
*(boxU)   & *(boxU)  \\
*(boxU)  & *(boxU)   & *(boxL) 
\end{ytableau} } +  \frac{ \begin{ytableau}
*(boxU)  \\
*(boxU)    &  *(boxU) 
\end{ytableau}\,\,\begin{ytableau}
*(boxU)   \\
*(boxL)   &  *(boxU) 
\end{ytableau} }{ \begin{ytableau}
*(boxU) \\
*(boxU)   & *(boxU)  \\
*(boxU)  & *(boxU)   & *(boxL) 
\end{ytableau} }+  \frac{ \begin{ytableau}
*(boxU)  \\
*(boxU)    &  *(boxU) 
\end{ytableau}\,\,\begin{ytableau}
*(boxU)   \\
*(boxU)   &  *(boxU) 
\end{ytableau} }{ \begin{ytableau}
*(boxU) \\
*(boxU)   & *(boxU)  \\
*(boxL)  & *(boxU)   & *(boxL) 
\end{ytableau} }\right) \nonumber \\
&& - 2 \left( \frac{ \begin{ytableau}
*(boxL)  \\
*(boxU)    &  *(boxU) 
\end{ytableau}\,\,\begin{ytableau}
*(boxU)   \\
*(boxU)   &  *(boxU) 
\end{ytableau} }{ \begin{ytableau}
*(boxU) \\
*(boxU)   & *(boxL)  \\
*(boxU)  & *(boxU)   & *(boxL) 
\end{ytableau} } +\frac{ \begin{ytableau}
*(boxU)  \\
*(boxU)    &  *(boxL) 
\end{ytableau}\,\,\begin{ytableau}
*(boxU)   \\
*(boxU)   &  *(boxU) 
\end{ytableau} }{ \begin{ytableau}
*(boxU) \\
*(boxU)   & *(boxL)  \\
*(boxU)  & *(boxU)   & *(boxL) 
\end{ytableau} } +\frac{ \begin{ytableau}
*(boxU)  \\
*(boxU)    &  *(boxU) 
\end{ytableau}\,\,\begin{ytableau}
*(boxL)   \\
*(boxU)   &  *(boxU) 
\end{ytableau} }{ \begin{ytableau}
*(boxU) \\
*(boxU)   & *(boxL)  \\
*(boxU)  & *(boxU)   & *(boxL) 
\end{ytableau} } +\frac{ \begin{ytableau}
*(boxU)  \\
*(boxU)    &  *(boxU) 
\end{ytableau}\,\,\begin{ytableau}
*(boxU)   \\
*(boxU)   &  *(boxL) %*
\end{ytableau} }{ \begin{ytableau}
*(boxL) \\
*(boxU)   & *(boxL)  \\
*(boxU)  & *(boxU)   & *(boxU) 
\end{ytableau} }+\frac{ \begin{ytableau}
*(boxU)  \\
*(boxU)    &  *(boxU) 
\end{ytableau}\,\,\begin{ytableau}
*(boxU)   \\
*(boxU)   &  *(boxU) 
\end{ytableau} }{ \begin{ytableau}
*(boxU) \\
*(boxL)   & *(boxL)  \\
*(boxU)  & *(boxU)   & *(boxL) 
\end{ytableau} } +\frac{ \begin{ytableau}
*(boxU)  \\
*(boxU)    &  *(boxU) 
\end{ytableau}\,\,\begin{ytableau}
*(boxU)   \\
*(boxU)   &  *(boxU) 
\end{ytableau} }{ \begin{ytableau}
*(boxU) \\
*(boxU)   & *(boxL)  \\
*(boxU)  & *(boxL)   & *(boxL) 
\end{ytableau} }  \right) \nonumber \\
&&  - 2 \left( \frac{ \begin{ytableau}
*(boxU)  \\
*(boxL)    &  *(boxU) 
\end{ytableau}\,\,\begin{ytableau}
*(boxU)   \\
*(boxU)   &  *(boxU) 
\end{ytableau} }{ \begin{ytableau}
*(boxU) \\
*(boxU)   & *(boxL)  \\
*(boxU)  & *(boxU)   & *(boxL) 
\end{ytableau} }+
 \frac{ \begin{ytableau}
*(boxU)  \\
*(boxU)    &  *(boxU) 
\end{ytableau}\,\,\begin{ytableau}
*(boxU)   \\
*(boxL)   &  *(boxU) 
\end{ytableau} }{ \begin{ytableau}
*(boxU) \\
*(boxU)   & *(boxL)  \\
*(boxU)  & *(boxU)   & *(boxL) 
\end{ytableau} } + 
 \frac{ \begin{ytableau}
*(boxU)  \\
*(boxU)    &  *(boxU) 
\end{ytableau}\,\,\begin{ytableau}
*(boxU)   \\
*(boxU)   &  *(boxU) 
\end{ytableau} }{ \begin{ytableau}
*(boxU) \\
*(boxU)   & *(boxL)  \\
*(boxL)  & *(boxU)   & *(boxL) 
\end{ytableau} } \right) \nonumber \\
&& +1 \left( \frac{ \begin{ytableau}
*(boxL)  \\
*(boxL)    &  *(boxU) 
\end{ytableau}\,\,\begin{ytableau}
*(boxU)   \\
*(boxU)   &  *(boxU) 
\end{ytableau} }{ \begin{ytableau}
*(boxU) \\
*(boxU)   & *(boxL)  \\
*(boxU)  & *(boxU)   & *(boxL) 
\end{ytableau} } +\frac{ \begin{ytableau}
*(boxU)  \\
*(boxL)    &  *(boxL) 
\end{ytableau}\,\,\begin{ytableau}
*(boxU)   \\
*(boxU)   &  *(boxU) 
\end{ytableau} }{ \begin{ytableau}
*(boxU) \\
*(boxU)   & *(boxL)  \\
*(boxU)  & *(boxU)   & *(boxL) 
\end{ytableau} } +\frac{ \begin{ytableau}
*(boxU)  \\
*(boxU)    &  *(boxU) 
\end{ytableau}\,\,\begin{ytableau}
*(boxL)   \\
*(boxL)   &  *(boxU) 
\end{ytableau} }{ \begin{ytableau}
*(boxU) \\
*(boxU)   & *(boxL)  \\
*(boxU)  & *(boxU)   & *(boxL) 
\end{ytableau} } +\frac{ \begin{ytableau}
*(boxU)  \\
*(boxU)    &  *(boxU) 
\end{ytableau}\,\,\begin{ytableau}
*(boxU)   \\
*(boxL)   &  *(boxL) %* 
\end{ytableau} }{ \begin{ytableau}
*(boxL) \\
*(boxU)   & *(boxL)  \\
*(boxU)  & *(boxU)   & *(boxU) 
\end{ytableau} }+\frac{ \begin{ytableau}
*(boxU)  \\
*(boxU)    &  *(boxU) 
\end{ytableau}\,\,\begin{ytableau}
*(boxU)   \\
*(boxU)   &  *(boxU) 
\end{ytableau} }{ \begin{ytableau}
*(boxU) \\
*(boxL)   & *(boxL)  \\
*(boxL)  & *(boxU)   & *(boxL) 
\end{ytableau} } +\frac{ \begin{ytableau}
*(boxU)  \\
*(boxU)    &  *(boxU) 
\end{ytableau}\,\,\begin{ytableau}
*(boxU)   \\
*(boxU)   &  *(boxU) 
\end{ytableau} }{ \begin{ytableau}
*(boxU) \\
*(boxU)   & *(boxL)  \\
*(boxL)  & *(boxL)   & *(boxL) 
\end{ytableau} }  \right) \nonumber \\
&& +1 \left( \frac{ \begin{ytableau}
*(boxL)  \\
*(boxU)    &  *(boxU) 
\end{ytableau}\,\,\begin{ytableau}
*(boxU)   \\
*(boxU)   &  *(boxL) %*
\end{ytableau} }{ \begin{ytableau}
*(boxL) \\
*(boxU)   & *(boxL)  \\
*(boxU)  & *(boxU)   & *(boxU) 
\end{ytableau} } +
 \frac{ \begin{ytableau}
*(boxU)  \\
*(boxU)    &  *(boxL) 
\end{ytableau}\,\,\begin{ytableau}
*(boxL)   \\
*(boxU)   &  *(boxU) 
\end{ytableau} }{ \begin{ytableau}
*(boxU) \\
*(boxU)   & *(boxL)  \\
*(boxU)  & *(boxU)   & *(boxL) 
\end{ytableau} } +
 \frac{ \begin{ytableau}
*(boxL)  \\
*(boxU)    &  *(boxU) 
\end{ytableau}\,\,\begin{ytableau}
*(boxU)   \\
*(boxU)   &  *(boxU) 
\end{ytableau} }{ \begin{ytableau}
*(boxU) \\
*(boxU)   & *(boxL)  \\
*(boxU)  & *(boxL)   & *(boxL) 
\end{ytableau} }+
 \frac{ \begin{ytableau}
*(boxU)  \\
*(boxU)    &  *(boxL) 
\end{ytableau}\,\,\begin{ytableau}
*(boxU)   \\
*(boxU)   &  *(boxU) 
\end{ytableau} }{ \begin{ytableau}
*(boxU) \\
*(boxL)   & *(boxL)  \\
*(boxU)  & *(boxU)   & *(boxL) 
\end{ytableau} }+
 \frac{ \begin{ytableau}
*(boxU)  \\
*(boxU)    &  *(boxU) 
\end{ytableau}\,\,\begin{ytableau}
*(boxL)   \\
*(boxU)   &  *(boxU) 
\end{ytableau} }{ \begin{ytableau}
*(boxU) \\
*(boxU)   & *(boxL)  \\
*(boxU)  & *(boxL)   & *(boxL) 
\end{ytableau} }+
 \frac{ \begin{ytableau}
*(boxU)  \\
*(boxU)    &  *(boxU) 
\end{ytableau}\,\,\begin{ytableau}
*(boxU)   \\
*(boxU)   &  *(boxL) %* 
\end{ytableau} }{ \begin{ytableau}
*(boxL) \\
*(boxL)   & *(boxL)  \\
*(boxU)  & *(boxU)   & *(boxU) \nonumber
\end{ytableau} }
\right).
\end{eqnarray}
\ytableausetup{boxsize=0.9em}
\end{conjecture}
Importantly, we see that in this solution only $26$ of the $2^{12}$ coefficients $c_\bmD$ are non-vanishing. We can verify that this Rule has weight 0,
\begin{eqnarray*}
|\JackjLR_{21,21}^{321}| &=&7(-2)-2(-1)\\
&&+1(0+0-2) \\
&&-2(-1-1-1-1-3-3)\\
&&-2(-1-1-3)\\
&&+1(0+0+0+0-4-4)\\
&&+1(0+0-2-2-2-2) \\
&=& 0.
\end{eqnarray*}

The existence of the solution \eqref{eq:generalsolution} is strong evidence for the validity of the Jack windowing \eqref{conj:jackwindowing} conjecture.

\begin{example}\label{ex:hanloncalc}
We revisit the example \eqref{example:hanlonpositivity} that Stanley attributes to Hanlon.

\begin{equation}
 \jackjLR_{31,21}^{421} = \frac{\alpha( 9+97\alpha+294\alpha^2 + 321\alpha^3 + 131\alpha^4+12\alpha^5 )}{3(1+\alpha)^3(2+\alpha)(1+2\alpha)^2(1+3\alpha)} 
\end{equation}
For $R= \meetjoin(421/31)$, we have the window factor
\[ F_{R} = \frac{
\ytableausetup{boxsize=0.8em}
\begin{ytableau}
*(boxS)  \\
*(boxS) & *(boxS) & *(boxL) \sL    \\
\end{ytableau}
}{
\begin{ytableau}
*(boxS) \\
*(boxS) & *(boxS)\\
*(boxS) & *(boxS) & *(boxL) \sL & *(boxS)   \\
\end{ytableau}} = \frac{1}{1+\alpha}.
\]
In particular, this example falls into our main family \eqref{eq:4paramwindow} with $(m_1,m_2,n_1,n_2) = (0,0,1,0)$. By conjecture \eqref{eq:generalsolution} it is solved by our general solution $\JackjLR_{21,21}^{321}$, which we can evaluate
\begin{eqnarray} [\JackjLR_{21,21}^{321}]_R &=& 7\left(\tfrac{\alpha^3}{1+2\alpha}\right) - 2\left(\tfrac{\alpha^2}{1+2\alpha}\right)  \\
&& +1\left(\tfrac{2\alpha^2(1+\alpha)}{(1+2\alpha)(1+3\alpha)}+\tfrac{\alpha^2(2+\alpha)}{(1+2\alpha)^2}+\tfrac{2\alpha^2}{3(1+\alpha)} \right) \nonumber \\
&& -2\left( \tfrac{\alpha^2}{1+2\alpha} + \tfrac{\alpha^2(1+\alpha)}{2(1+2\alpha)}+\tfrac{\alpha^2}{1+2\alpha}+\tfrac{\alpha^2}{1+2\alpha}+\tfrac{\alpha^3}{2+\alpha}+\tfrac{\alpha^3(1+3\alpha)}{2(1+\alpha)(1+2\alpha)}\right) \nonumber\\
&& -2\left( \tfrac{2\alpha^3(1+\alpha)}{(1+2\alpha)(1+3\alpha)} + \tfrac{\alpha^3(2+\alpha)}{(1+2\alpha)^2} +\tfrac{2\alpha^3}{3(1+\alpha)}\right) \nonumber \\
&& +1\left( \tfrac{2\alpha^2(1+\alpha)}{(1+2\alpha)(1+3\alpha)} + \tfrac{\alpha^2(1+\alpha)^2}{(1+2\alpha)(1+3\alpha)} + \tfrac{\alpha^2(2+\alpha)}{(1+2\alpha)^2}+ \tfrac{\alpha^2(2+\alpha)}{(1+2\alpha)^2}+ \tfrac{2\alpha^3(1+2\alpha)}{3(1+\alpha)(2+\alpha)}    + \tfrac{\alpha^3(1+3\alpha)}{3(1+\alpha)^2}  \right) \nonumber \\
&& +1\left(  \tfrac{\alpha}{1+2\alpha}+\tfrac{\alpha(1+\alpha)}{2(1+2\alpha)}+\tfrac{\alpha^2(1+3\alpha)}{2(1+\alpha)(1+2\alpha)}+\tfrac{\alpha^2(1+\alpha)}{2(2+\alpha)}+\tfrac{\alpha^2(1+3\alpha)}{2(1+\alpha)(1+2\alpha)}+\tfrac{\alpha^2}{2+\alpha}  \right) \nonumber \\
&=& \frac{\alpha( 9+97\alpha+294\alpha^2 + 321\alpha^3 + 131\alpha^4+12\alpha^5 )}{3(1+\alpha)^2(2+\alpha)(1+2\alpha)^2(1+3\alpha)}.
\end{eqnarray}
This agrees with $\jackjLR_{31,21}^{421}/F_R$ as expected.
\end{example}

\subsection{Window Parameters}
We saw that in the family of \eqref{eq:4paramwindow} we had the four parameters $(m_1,m_2,n_1,n_2)$. Here we briefly discuss about how such families are generated. 
Let $\sigma=(\mu,\nu,\lambda)$ be a triple of partitions. Here we consider \emph{composite} Windowings, where we can window over $\lambda/\mu$ and $\lambda/\nu$, in either order.
%that cannot be reduced by windowing (which we say are \emph{window irreducible}). 
The family of partitions which reduce via composite windows to $\sigma$ has at most a number of parameters computed by counting the number of connected components $\#(\lambda/\mu)+\#(\lambda/\nu)+2$. We label the boxes which separate such connected components with $i$, we have at most two windowing parameters, $m_i, n_i$, for the vertical and horizontal spacing introduced at that point.

\begin{example} Consider $\lambda = 33221$, $\mu=2211$, $\nu=2111$, there are two points separating components of $\lambda/\mu$, labelled $(1)$ and $(2)$, and one point separating $\lambda/\nu$, labelled $(3)$ below:
\ytableausetup{boxsize=1.0em}
\[
\begin{ytableau}
 *(boxS) & \none[1] \\
 *(boxB) & *(boxS) \\
  *(boxB)  & *(boxS) & \none[2]\\
  *(boxB)  &*(boxB) & *(boxS)  \\
 *(boxB) &*(boxB)   & *(boxS) 
\end{ytableau} \qquad 
\begin{ytableau}
 *(boxS) & \none[3]\\
 *(boxB) & *(boxS) \\
  *(boxB)  & *(boxS) \\
  *(boxB)  &*(boxS) & *(boxS)  \\
 *(boxB) &*(boxB)   & *(boxS) 
\end{ytableau}
\]
\end{example}

For these two composite windowings, there are two corresponding windowing factors for the corresponding families of partitions.
\begin{equation}\label{eq:fff}
\windfact_{R_{12|3}} =  \frac{\begin{ytableau}
\none[m_1\,\,]  &*(boxU) & \none[1] \\
\none &*(boxS)  &*(boxL) \\
\none &*(boxS)  & *(boxL) \\
\none[m_2\,\,]   &*(boxU)  & *(boxG)  & *(boxU) & \none[2] \\
\none  &*(boxS)  &*(boxL)  &*(boxS)  & *(boxL)   \\
\none  &*(boxS) & *(boxL)   &*(boxS)  & *(boxL)  \\
\none  &\none & \none[n_1]  & \none & \none[n_2]
\end{ytableau} \,\, \begin{ytableau}
\none \\
\none[m_3\,\,] & *(boxU)& \none[3]  \\
\none & *(boxS)  \\
\none & *(boxS)  \\
\none & *(boxS)   \\
\none &  *(boxS) & *(boxS)   \\
\none & \none  & \none 
\end{ytableau}
}{\begin{ytableau}
\none & *(boxS) \\
\none[m_3\,\,] & *(boxU)& \none[3]  \\
\none[m_1\,\,] & *(boxU) & \none[1]  \\
\none  & *(boxS) & *(boxL) & *(boxS) \\
\none & *(boxS) &*(boxL) & *(boxS) \\
\none[m_2\,\,] & *(boxU) & *(boxG) &*(boxU)& \none[2]   \\
\none & *(boxS) &*(boxL)  &*(boxS) & *(boxL)   & *(boxS)  \\
\none & *(boxS) &*(boxL)  &*(boxS) & *(boxL)  & *(boxS) \\
\none & \none  & \none[n_1] & \none  & \none[n_2] & \none
\end{ytableau}}, \qquad \windfact_{R_{3|12}} =
 \frac{\begin{ytableau}
 \none \\
\none[m_1\,\,]  &*(boxU) & \none[1] \\
\none &*(boxS)  \\
\none &*(boxS)  &  \none & \none[2] \\
\none  &*(boxS)    &*(boxS)  & *(boxL)   \\
\none  &*(boxS)  &*(boxS)  & *(boxL)  \\
\none  &\none   & \none & \none[n_2]
\end{ytableau} \,\, \begin{ytableau}
\none \\
\none[m_3\,\,] & *(boxU)& \none[3]  \\
\none & *(boxS) &*(boxL)  \\
\none & *(boxS) &*(boxL)  \\
\none & *(boxS) &*(boxL)    \\
\none &  *(boxS)&*(boxL)  & *(boxS)   \\
\none & \none  & \none[n_3] & \none 
\end{ytableau}
}{\begin{ytableau}
\none & *(boxS) \\
\none[m_1\,\,] & *(boxU) & \none[1]  \\
\none[m_3\,\,] & *(boxU)& \none[3]  \\
\none  & *(boxS) & *(boxL) & *(boxS) \\
\none & *(boxS) &*(boxL) & *(boxS) & \none[2]   \\
\none & *(boxS) &*(boxL)  &*(boxS) & *(boxL)   & *(boxS)  \\
\none & *(boxS) &*(boxL)  &*(boxS) & *(boxL)  & *(boxS) \\
\none & \none  & \none[n_3] & \none  & \none[n_2] & \none
\end{ytableau}}.
\end{equation}

\subsection{Beyond $c=2$}

Unfortunately, the particular example of $c_{21,21}^{321}$ may be the only one that we can solve computationally, as there are $2^{12}$ unknowns. For a small $c=3$ example like $c_{321,221}^{43211}$ there are $2^{22} = \,\sim4\times 10^6$ variables $c_\bmD$, for which one would be considering the family:

\begin{equation}
\frac{\begin{ytableau}
\none \\
\none & *(boxS)  \\
\none[m_1\,\,] & *(boxL) &*(boxL) & \none[1] \\
\none & *(boxS) &*(boxS) & *(boxU)    \\
\none[m_2\,\,] & *(boxL) &*(boxL) & *(boxG) & *(boxL) & \none[2]  \\
\none & *(boxS) &*(boxS) & *(boxU) & *(boxS) & *(boxU)      \\
\none & \none  & \none & \none[n_1]  & \none & \none[n_2] 
\end{ytableau}\,
\begin{ytableau}
\none \\
\none & *(boxS) &*(boxS)  \\
\none & *(boxS) &*(boxS)  & *(boxS)   \\
\none & \none  & \none & \none 
\end{ytableau}
}{
\begin{ytableau}
\none \\
\none & *(boxS)  \\
\none & *(boxS)  \\
\none & *(boxS) &*(boxS) \\
\none[m_1\,\,] & *(boxL) &*(boxL) & \none[1] \\
\none & *(boxS) &*(boxS) & *(boxU) & *(boxS)  \\
\none[m_2\,\,] & *(boxL) &*(boxL) & *(boxG) & *(boxL) & \none[2]  \\
\none & *(boxS) &*(boxS) & *(boxU) & *(boxS) & *(boxU) &  *(boxS)   \\
\none & \none  & \none & \none[n_1]  & \none & \none[n_2] 
\end{ytableau}}
\end{equation}

\subsection{The Kernel}

The system of equations \eqref{def:Zfamily} has a large kernel. For example, consider the Stanley sum
\begin{equation}
\ytableausetup{boxsize=0.7em}
 \bk := \frac{ \begin{ytableau}
   \\
     &   
\end{ytableau}\,\,\begin{ytableau}
   \\
    &  
\end{ytableau} }{ \begin{ytableau}
  \\
    & *(boxL) \sL  \\
   &     & *(boxU)\sU
\end{ytableau} } - \frac{ \begin{ytableau}
   \\
     &  
\end{ytableau}\,\,\begin{ytableau}
    \\
   &  
\end{ytableau} }{ \begin{ytableau}
  \\
    & *(boxU) \sU \\
   &     & *(boxL)  \sL
\end{ytableau} },
\ytableausetup{boxsize=0.9em}
\end{equation}
where the unfilled boxes are prescribed any fixed set of hook choices (the same for each of the two terms).
Then clearly
\[ [\bk]_R = 0 \]
for any $R$ in the family \eqref{eq:4paramwindow}. More complicated examples can be shown to exist.

\begin{example} The following Stanley sum is in the kernel of the family \eqref{eq:4paramwindow}
\begin{eqnarray}
\label{eq:kernelfirst}
\ytableausetup{boxsize=0.6em}
\bmk' &=&
\,\,\,\,\,1 \cdot \frac{ \begin{ytableau}
*(boxU)  \\
*(boxU)    &  *(boxU) 
\end{ytableau}\,\,\begin{ytableau}
  \\
  &  
\end{ytableau} }{ \begin{ytableau}
 \\
    &  *(boxL) \\
   &    &  
\end{ytableau} } 
-1 \cdot \frac{ \begin{ytableau}
*(boxU)  \\
*(boxU)    &  *(boxL) 
\end{ytableau}\,\,\begin{ytableau}
  \\
  &  
\end{ytableau} }{ \begin{ytableau}
 \\
    &  *(boxU) \\
   &    &  
\end{ytableau} }
+1 \cdot
\frac{ \begin{ytableau}
*(boxU)  \\
*(boxL)    &  *(boxU) 
\end{ytableau}\,\,\begin{ytableau}
  \\
  &  
\end{ytableau} }{ \begin{ytableau}
 \\
    &  *(boxU) \\
   &    &  
\end{ytableau} }
- 2\cdot
\frac{ \begin{ytableau}
*(boxU)  \\
*(boxL)    &  *(boxU) 
\end{ytableau}\,\,\begin{ytableau}
  \\
  &  
\end{ytableau} }{ \begin{ytableau}
 \\
    &  *(boxL) \\
   &    &  
\end{ytableau} }  \\
&&
+\,1 \cdot \frac{ \begin{ytableau}
*(boxU)  \\
*(boxL)    &  *(boxL) 
\end{ytableau}\,\,\begin{ytableau}
  \\
  &  
\end{ytableau} }{ \begin{ytableau}
 \\
    &  *(boxL) \\
   &    &  
\end{ytableau} } 
-1 \cdot \frac{ \begin{ytableau}
*(boxL)  \\
*(boxU)    &  *(boxU) 
\end{ytableau}\,\,\begin{ytableau}
  \\
  &  
\end{ytableau} }{ \begin{ytableau}
 \\
    &  *(boxU) \\
   &    &  
\end{ytableau} }
+2 \cdot
\frac{ \begin{ytableau}
*(boxL)  \\
*(boxU)    &  *(boxL) 
\end{ytableau}\,\,\begin{ytableau}
  \\
  &  
\end{ytableau} }{ \begin{ytableau}
 \\
    &  *(boxU) \\
   &    &  
\end{ytableau} }
-1\cdot
\frac{ \begin{ytableau}
*(boxL)  \\
*(boxU)    &  *(boxL) 
\end{ytableau}\,\,\begin{ytableau}
  \\
  &  
\end{ytableau} }{ \begin{ytableau}
 \\
    &  *(boxL) \\
   &    &  
\end{ytableau} }  \nonumber \\
&&
+\,1 \cdot \frac{ \begin{ytableau}
*(boxL)  \\
*(boxL)    &  *(boxU) 
\end{ytableau}\,\,\begin{ytableau}
  \\
  &  
\end{ytableau} }{ \begin{ytableau}
 \\
    &  *(boxL) \\
   &    &  
\end{ytableau} } 
-1 \cdot \frac{ \begin{ytableau}
*(boxL)  \\
*(boxL)    &  *(boxL) 
\end{ytableau}\,\,\begin{ytableau}
  \\
  &  
\end{ytableau} }{ \begin{ytableau}
 \\
    &  *(boxU) \\
   &    &  
\end{ytableau} }. \nonumber
\end{eqnarray}
\ytableausetup{boxsize=0.8em}
where the empty boxes are filled with any fixed choice of hooks. 
\end{example}
We will describe the kernel in more detail in a follow up work \cite{Mickler:2025} .

\section{Factorization}\label{section:factorization}

\ytableausetup{boxsize=0.7em}

We have shown that the windowing property of Schur LR coefficients \eqref{lemma:schurlrwindow} can be extended to Jack LR coefficients \eqref{conj:jackwindowing}. In this section we propose that a factorization property of Schur LR coefficients due to King, Tollu and Toumazet (KTT) can also be extended to the Jack case.

\subsection{The KTT Setup}

Consider an ordered subset $I \subset \{1,2,\ldots\}$ (resp $J,K$), whose entries refer to rows of $\mu$ (resp. $\nu,\lambda$). let $p(I)$ be the partition with rows $\{I_n-n : 1 \leq n \leq |I|\}$ in reverse order. E.g. $p(\{2,4,7\}) = \{4,2,1\}$. If $|I|=|J|=|K|$ and $c_{p(I),p(J)}^{p(K)} >0$, then we say that $\{I,J,K\}$ is a \emph{Horn triple}. We say the triple is \emph{essential} if $c_{p(I),p(J)}^{p(K)} = 1$. We define the partial sum $ps(\mu)_I := \{ \mu_i : i\in I\}$, and the restricted partition $\mu_I := \{ \mu_i : i \in I\}$. I.e. $ps(\mu)_I = |\mu_I|$.

\begin{theorem}[Schur Factorization: King, Tollu, Toumazet {\cite[1.4]{King:2009}} ]\label{thm:factorization}

Associated to an essential Horn triple $(I,J,K)$, and for a triple of partitions $(\mu,\nu,\lambda)$ with $|\mu|+|\nu|=|\lambda|$ such that
\begin{equation}
ps(\mu)_I + ps(\nu)_J = ps(\lambda)_K,
\end{equation}
there is a associated factorization of Schur LR coefficients:
\begin{equation}\label{eq:factorizationlr}
c^{\lambda}_{\mu,\nu} = c^{\lambda_K}_{\mu_I,\nu_J} \times c^{\lambda_{\bar K}}_{\mu_{\bar I},\nu_{\bar J}}.
\end{equation}
where $\bar I$, $\bar J$, $\bar K$ are the complements of $I$, $J$, $K$.
\end{theorem}

\begin{example}[{\cite{King:2009}}]\label{ex:kingfact}
Consider the essential Horn triple 
\begin{equation}
(I,J,K) =(\{1,2,4\},\{2,3,4\},\{2,3,5\}).
\end{equation}
The partitions $\mu = 2221$, $\nu=4211$, and $\lambda=44322$ satisfy the conditions of KTT factorization \eqref{thm:factorization} w.r.t $(I,J,K)$, and thus there is the factorization
\begin{equation}
c_{\underline{2}\underline{2}2\underline{1},4\underline{2}\underline{1}\underline{1}}^{4\underline{4}\underline{3}2\underline{2}} = c_{221,211}^{432} \times c_{2,4}^{42},
\end{equation}
where we use underlining to indicate the factorization (i.e. the terms associated with the Horn triple). Note that each of these Schur LR coefficients are equal to 1. This example is represented pictorially as (dots are use to distinguish the factors)
\begin{equation}
\frac{ \begin{ytableau}
 \mk \\
 &  \\
 \mk &  \mk  \\
 \mk &  \mk 
\end{ytableau} \,\, \begin{ytableau}
 \mk \\
 \mk \\
 \mk &  \mk  \\
 &  &  &   
\end{ytableau} } { \begin{ytableau}
 \mk & \mk \\
 &   \\
 \mk & \mk &  \mk  \\
 \mk & \mk &  \mk & \mk  \\
 &  &  &  
\end{ytableau} } 
= 
 \frac{ \begin{ytableau}
 \mk \\
 \mk &   \mk \\
 \mk &  \mk 
\end{ytableau} \,\, \begin{ytableau}
 \mk \\
 \mk \\
 \mk &  \mk 
\end{ytableau} } { \begin{ytableau}
 \mk & \mk \\
 \mk & \mk &  \mk  \\
 \mk & \mk &  \mk &  \mk 
\end{ytableau} } 
\times
\frac{ \begin{ytableau}
\, &  
\end{ytableau} \,\, \begin{ytableau}
\, &  &  &   
\end{ytableau} } { \begin{ytableau}
\, &   \\
 &  &  &  
\end{ytableau} }.
\end{equation}
\end{example}

We now conjecture that an analogous factorization result holds for Jack LR coefficients. We begin with an illustrative example.

\begin{example}
Returning to example \ref{ex:kingfact}, we compute the corresponding Jack LR coefficient
\begin{equation}
g_{2221,4211}^{44322} = \frac{3 \alpha^2 (3+\alpha)(4+\alpha)(1+3\alpha)(3+4\alpha)}{2(1+\alpha)^5(2+\alpha)^2(3+2\alpha)(4+3\alpha)^2}.
\end{equation}
We find this coefficient is given by the evaluation of a single Stanley Diagram (as predicted by the strong Stanley conjecture)
\begin{equation}
\JackjLR_{\underline{2}\underline{2}2\underline{1},4\underline{2}\underline{1}\underline{1}}^{4\underline{4}\underline{3}2\underline{2}} = \frac{ \begin{ytableau}
*(boxL) \\
*(boxU) & *(boxU) \\
*(boxL) & *(boxL) \\
*(boxL) & *(boxL)   
\end{ytableau} \,\, \begin{ytableau}
*(boxL) \\
*(boxL) \\
*(boxL) & *(boxL) \\
*(boxU) & *(boxU) & *(boxU) & *(boxU)  
\end{ytableau} } { \begin{ytableau}
*(boxL) & *(boxL) \\
*(boxU) & *(boxU) \\
*(boxL) & *(boxL) & *(boxL) \\
*(boxL) & *(boxL) & *(boxL) & *(boxL) \\
*(boxU) & *(boxU) & *(boxU) & *(boxU)  
\end{ytableau} }. 
\end{equation}
The form of this diagram suggests a {\bf\emph{factorization}} of the diagram
\begin{equation}
\frac{ \begin{ytableau}
*(boxL) \mk \\
*(boxU) & *(boxU) \\
*(boxL) \mk &*(boxL)  \mk  \\
*(boxL) \mk &*(boxL)  \mk 
\end{ytableau} \,\, \begin{ytableau}
*(boxL) \mk \\
*(boxL) \mk \\
*(boxL) \mk & *(boxL) \mk  \\
*(boxU) & *(boxU) & *(boxU) & *(boxU)  
\end{ytableau} } { \begin{ytableau}
*(boxL) \mk &*(boxL) \mk \\
*(boxU) & *(boxU)  \\
*(boxL) \mk &*(boxL) \mk &*(boxL)  \mk  \\
*(boxL) \mk &*(boxL) \mk &*(boxL)  \mk &*(boxL) \mk  \\
*(boxU) & *(boxU) & *(boxU) & *(boxU)  
\end{ytableau} } 
= 
 \frac{ \begin{ytableau}
*(boxL) \mk \\
*(boxL) \mk &*(boxL)   \mk \\
*(boxL) \mk &*(boxL)  \mk 
\end{ytableau} \,\, \begin{ytableau}
*(boxL) \mk \\
*(boxL) \mk \\
*(boxL) \mk & *(boxL)  \mk 
\end{ytableau} } { \begin{ytableau}
*(boxL) \mk &*(boxL) \mk \\
*(boxL) \mk &*(boxL) \mk &*(boxL)  \mk  \\
*(boxL) \mk &*(boxL) \mk &*(boxL)  \mk &*(boxL)  \mk 
\end{ytableau} } 
\circ
\frac{ \begin{ytableau}
*(boxU) & *(boxU) 
\end{ytableau} \,\, \begin{ytableau}
*(boxU) & *(boxU) & *(boxU) & *(boxU)  
\end{ytableau} } { \begin{ytableau}
*(boxU) & *(boxU)  \\
*(boxU) & *(boxU) & *(boxU) & *(boxU)  
\end{ytableau} },
\end{equation}
corresponding to the KTT factorization. We find that the two factors are also single-term Rules  
\begin{equation}
\JackjLR_{\underline{2}\underline{2}2\underline{1},4\underline{2}\underline{1}\underline{1}}^{4\underline{4}\underline{3}2\underline{2}} = \JackjLR_{221,211}^{432} \circ \JackjLR_{2,4}^{42}.
\end{equation}
\end{example}

We now conjecture that that such a factorization always exists given certain restrictions

\begin{conjecture}[Jack Factorization]\label{conj:factorization}
For a factorization of Schur LR coefficients associated to an essential Horn triple $(I,J,K)$ as in \eqref{eq:factorizationlr}, i.e.
\begin{equation}
c^{\lambda}_{\mu,\nu} = c^{\lambda_K}_{\mu_I,\nu_J} \times c^{\lambda_{\bar K}}_{\mu_{\bar I},\nu_{\bar J}},
\end{equation}
with the additional properties that
\begin{itemize}
\item $c^{\lambda_{ K}}_{\mu_{ I},\nu_{ J}} = 1$ (i.e. for the factor associated to the Horn triple)
 \item None of the entries of $\lambda_I$, $\mu_I$ or $\nu_K$ are $0$.
\end{itemize}
then there exists a triple of rules $\JackjLR^{\lambda}_{\mu,\nu}$, $\JackjLR^{\lambda_K}_{\mu_I,\nu_J}$, $\JackjLR^{\lambda_{\bar K}}_{\mu_{\bar I},\nu_{\bar J}}$ that satisfy a composition rule
\begin{equation}
\JackjLR^{\lambda}_{\mu,\nu}  = \JackjLR^{\lambda_K}_{\mu_I,\nu_J} \circ \JackjLR^{\lambda_{\bar K}}_{\mu_{\bar I},\nu_{\bar J}}.
\end{equation}
where the composition is given by the union of the Diagrams extended by linearity.
\end{conjecture}

\begin{example}
Consider the factorization
\begin{equation}
c_{\underline{2}2,2\underline{1}1}^{3\underline{3}2} = c_{2,1}^{3} \cdot c_{2,21}^{32},
\end{equation}
w.r.t the essential Horn triple $(I,J,K)=(\{1\},\{2\},\{2\})$, (underlining indicates the factorization). The corresponding Jack LR coefficients are
\[ g_{22,211}^{332} = \tfrac{\alpha(3+\alpha)}{(1+\alpha)^3(3+2\alpha)},\qquad  g_{2,1}^{3} = \tfrac{1}{1+2\alpha},\qquad  g_{2,21}^{32} = \tfrac{\alpha(2+\alpha)}{(1+\alpha)^2(1+2\alpha)}.  \]
From these, we can deduce we must have 
\[ g_{22,211}^{332} = 
 \frac{  \begin{ytableau}
 *(boxG) \\
 *(boxG)   \\
*(boxL) &  *(boxG)
\end{ytableau}\,\,\begin{ytableau}
*(boxG) &  *(boxG) \\
*(boxG) \sA &  *(boxG)  
\end{ytableau} } { \begin{ytableau}
*(boxG) & *(boxG) \\
*(boxL)   & *(boxG)\sA   & *(boxG)   \\
*(boxL) & *(boxU)  &  *(boxG) 
\end{ytableau}  } \times \frac{4\alpha}{(1+\alpha)}
\]
\[ 
g_{2,1}^{3}= \frac{ \begin{ytableau}
*(boxG)  &  *(boxG) 
\end{ytableau} \,\, \begin{ytableau}
 *(boxG)  \\
\end{ytableau} } { \begin{ytableau}
*(boxL)  & *(boxG)  & *(boxG) \\
\end{ytableau} } \times 1
\]
\[
g_{2,21}^{32} = \frac{ \begin{ytableau}
*(boxU) &  *(boxG) \\
\end{ytableau} \,\, \begin{ytableau}
 *(boxG) \\
*(boxL) &  *(boxG)
\end{ytableau} } { \begin{ytableau}
*(boxL) & *(boxG) \\
*(boxL) & *(boxU)  &  *(boxG)
\end{ytableau} } \times 1
\]

We find the following rules compatible with the above coefficients, and with both the factorization and windowing 
%ACTUALLY this is compatible with $\mu\times(r)$ pieri, and factorization.

\[ \JackjLR_{2\underline{1}1,\underline{2}2}^{3\underline{3}2} = 
 \frac{\begin{ytableau}
 *(boxL) \\
 *(boxL) \mk  \\
*(boxL) &  *(boxU)
\end{ytableau}\,\, \begin{ytableau}
*(boxU) &  *(boxU) \\
*(boxU) \mk &  *(boxU) \mk 
\end{ytableau}   } { \begin{ytableau}
*(boxL) & *(boxU) \\
*(boxL) \mk  & *(boxU) \mk   & *(boxU)  \mk \\
*(boxL) & *(boxU)  &  *(boxU)
\end{ytableau} } 
=
 \frac{ \begin{ytableau}
 *(boxL) \mk \\
\end{ytableau} \, \,\begin{ytableau}
*(boxU) \mk &  *(boxU) \mk
\end{ytableau} } { \begin{ytableau}
*(boxL) \mk & *(boxU) \mk & *(boxU) \mk\\
\end{ytableau} } 
\circ
 \frac{ \begin{ytableau}
 *(boxL) \\
*(boxL) &  *(boxU)
\end{ytableau} \,\, \begin{ytableau}
*(boxU) &  *(boxU) \\
\end{ytableau} } { \begin{ytableau}
*(boxL) & *(boxU) \\
*(boxL) & *(boxU)  &  *(boxU)
\end{ytableau} }  = \JackjLR_{\underline{1},\underline{2}}^{\underline{3}} \circ \JackjLR_{21,2}^{32}.
\]
\end{example}
\begin{example}
Similarly, if we exchange the two factors $\mu,\nu$ in the previous example, we find the factorization

\[ \JackjLR_{\underline{2}2,2\underline{1}1}^{3\underline{3}2} = 
 \frac{ \begin{ytableau}
*(boxU) &  *(boxG) \\
*(boxL) \mk &  *(boxL) \mk 
\end{ytableau} \,\, \begin{ytableau}
 *(boxG) \\
 *(boxL) \mk  \\
*(boxL) &  *(boxG)
\end{ytableau} } { \begin{ytableau}
*(boxL) & *(boxG) \\
*(boxL) \mk  & *(boxL) \mk   & *(boxL)  \mk \\
*(boxL) & *(boxU)  &  *(boxL)
\end{ytableau} } 
=
 \frac{ \begin{ytableau}
*(boxL) \mk &  *(boxL) \mk
\end{ytableau} \,\, \begin{ytableau}
 *(boxL) \mk \\
\end{ytableau} } { \begin{ytableau}
*(boxL) \mk & *(boxL) \mk & *(boxL) \mk\\
\end{ytableau} } 
\circ
 \frac{ \begin{ytableau}
*(boxU) &  *(boxG) \\
\end{ytableau} \,\, \begin{ytableau}
 *(boxG) \\
*(boxL) &  *(boxG)
\end{ytableau} } { \begin{ytableau}
*(boxL) & *(boxG) \\
*(boxL) & *(boxU)  &  *(boxL)
\end{ytableau} } = \JackjLR_{\underline{2},\underline{1}}^{\underline{3}} \circ \JackjLR_{2,21}^{32}  
\]
where each of the set of four white boxes contributes a factor of $1$.
\end{example}

\begin{example}
The transpose of the above example factorizes over the same Horn triple
\[   \JackjLR_{\underline{2}2,3\underline{1}}^{3\underline{3}2} = \JackjLR_{2,1}^{3} \circ \JackjLR_{2,3}^{32}  \] 
\begin{equation}
 \frac{ \begin{ytableau}
*(boxU) &  *(boxU) \\
*(boxL) \mk  &  *(boxL) \mk 
\end{ytableau} \,\, \begin{ytableau}
 *(boxL)  \mk \\
*(boxU) &  *(boxU) & *(boxU) \\
\end{ytableau} } { \begin{ytableau}
*(boxU) & *(boxU) \\
*(boxL) \mk  & *(boxL)   \mk & *(boxL)  \mk \\
*(boxU) & *(boxU)  &  *(boxU)
\end{ytableau} } 
=
 \frac{ \begin{ytableau}
*(boxL) \mk  &  *(boxL) \mk 
\end{ytableau} \,\, \begin{ytableau}
 *(boxL) \mk  \\
\end{ytableau} } { \begin{ytableau}
*(boxL) \mk  & *(boxL)  \mk  & *(boxL) \mk  \\
\end{ytableau} } 
\circ
 \frac{ \begin{ytableau}
*(boxU) &  *(boxU) \\
\end{ytableau} \,\, \begin{ytableau}
*(boxU) &  *(boxU) & *(boxU) \\
\end{ytableau} } { \begin{ytableau}
*(boxU) & *(boxU) \\
*(boxU) & *(boxU)  &  *(boxU)
\end{ytableau} } 
\end{equation}
%Note that this factorization uses a different rule for $\JackjLR_{2,1}^3$ than the previous examples (although this is not forced).
\end{example}

\begin{example}
For the factorization
\begin{equation}
c^{\underline{4}321}_{\underline{2}21,\underline{2}21} = c^{4}_{2,2} \cdot  c^{321}_{21,21} .
\end{equation}
associated to the essential Horn triple $(I,J,K)=(\{1\},\{1\},\{1\})$, we find
\begin{equation}  g_{221,221}^{4321} =  \frac{(3+\alpha)^22^2}{(4+3\alpha)(3+2\alpha)(2+\alpha)(1)} \cdot g_{21,21}^{321},
\end{equation}
which corresponds to the factorization
\begin{equation}   \frac{ \begin{ytableau}
  \\
   & \\
   *(boxL) &  *(boxL)   \\
\end{ytableau}\,\,\begin{ytableau}
  \\
   & \\
   *(boxL) &  *(boxL)   \\
\end{ytableau} }{ \begin{ytableau}
\\
  &  \\
 &    & \\
   *(boxL) &   *(boxL) &  *(boxL)  & *(boxL)   \\
\end{ytableau} }  =   \frac{ \begin{ytableau}
 *(boxL) &  *(boxL) 
\end{ytableau}\,\,\begin{ytableau}
 *(boxL)  &  *(boxL) 
\end{ytableau} }{ \begin{ytableau}
*(boxL) &  *(boxL)  &  *(boxL)  &  *(boxL) 
\end{ytableau} } 
\circ
\frac{ \begin{ytableau}
 \\
   &  
\end{ytableau}\,\,\begin{ytableau}
  \\
  &  
\end{ytableau} }{ \begin{ytableau}
\\
  &  \\
 &   & 
\end{ytableau} }  
\end{equation}
\end{example}

\ytableausetup{boxsize=1.0em}

\begin{lemma}
The Pieri rule is compatible with Factorization. That is, for $\nu = 1^r$ and $\lambda/\mu$ a vertical $r$-strip, we take the essential Horn triple $(I,J,K)=(\{i\},\{1\},\{i\})$ for any $i \leq r$. If $\lambda/\mu$ has a box in the $i$-th row, then we have $ps(\mu)_I +ps(\nu)_J = ps(\lambda)_K$ and hence we have a factorization
\begin{equation} \JackjLR_{\mu,1^r}^{\lambda} = \JackjLR_{\mu_i,1}^{\lambda_i} \circ \JackjLR_{\mu_{\bar I},1^{r-1}}^{\lambda_{\bar K}}.
\end{equation}
Here the removed row is 
\[  \JackjLR_{\mu_i,1}^{\lambda_i} = \frac{ \begin{ytableau}
*(boxL) & *(boxL) \cdots & *(boxL)
\end{ytableau}\,\,\begin{ytableau}
*(boxL)
\end{ytableau} }{ \begin{ytableau}
*(boxL) &   *(boxL)\cdots &  *(boxL)  & *(boxL)   \\
\end{ytableau} }  \] 
Furthermore, one can factor multiple rows at a time with 
\[(I,J,K)=(\{i_1,i_2,\ldots,i_n\},\{1,2,\ldots,n\},\{i_1,i_2,\ldots,i_n\}).\]

\end{lemma}

\subsection{Back to the $c=2$ problem}

We now consider certain factorizations which we expect to constrain the rule $\JackjLR_{21,21}^{321}$.

\begin{example}
Consider the factorization
\begin{equation}
c^{32\underline{2}1}_{2\underline{1}1,2\underline{1}1} = c^{2}_{1,1} c^{321}_{21,21}.
\end{equation}
associated to the essential Horn triple $(I,J,K)=(\{2\},\{2\},\{3\})$.

Thus, we expect a composite rule of the form:
\ytableausetup{boxsize=0.8em}
\begin{equation} 
 \frac{ \begin{ytableau}
 \\
*(boxL)   \\
   &  
\end{ytableau}\,\,\begin{ytableau}
  \\
 *(boxL)   \\
   &
\end{ytableau} }{ \begin{ytableau}
\\
*(boxL) &  *(boxL)   \\
  &  \\
 &    &
\end{ytableau} }=
 \frac{ \begin{ytableau}
*(boxL) 
\end{ytableau}\,\,\begin{ytableau}
 *(boxL) 
\end{ytableau} }{ \begin{ytableau}
*(boxL) &  *(boxL) 
\end{ytableau} } 
\circ
\frac{ \begin{ytableau}
 \\
   &  
\end{ytableau}\,\,\begin{ytableau}
  \\
  &  
\end{ytableau} }{ \begin{ytableau}
\\
  &  \\
 &   & 
\end{ytableau} } .
\end{equation}
\end{example}

\begin{example}
Consider the factorization
\begin{equation}
c^{3\underline{3}2\underline{2}1}_{2\underline{1}\underline{1}1,\underline{2}2\underline{1}1} =  c^{32}_{11,21} c^{321}_{21,21}.
\end{equation}
associated to the essential Horn triple $(I,J,K)=(\{3,2\},\{3,1\},\{4,2\})$. This example can be extended to a factorization of the form:

\begin{equation}
 \frac{ \begin{ytableau}
\none &    \\
\none[r_1] &   *(boxL)    \\
\none &     & \\
\none[r_2] &     *(boxL) &  *(boxL)   \\
\end{ytableau}  \,\, \begin{ytableau}
\none &   \\
\none[r_1] &  *(boxL)   \\
\none[r_2] &     *(boxL)     \\
\none &   &   \\
\end{ytableau}}{ \begin{ytableau}
\none &  \\
\none[r_1] &  *(boxL) &   *(boxL)   \\
\none &    &  \\
\none[r_2] &    *(boxL) &   *(boxL) &  *(boxL)   \\
\none &  &    &
\end{ytableau} } =   \frac{ \begin{ytableau}
\none[r_1] &   *(boxL) \\
\none[r_2] &   *(boxL)  &  *(boxL) 
\end{ytableau}\,\,\begin{ytableau}
\none[r_1] &  *(boxL) \\ \none[r_2] &    *(boxL) 
\end{ytableau} }{ \begin{ytableau}
\none[r_1] &   *(boxL) &  *(boxL)  \\
\none[r_2] &  *(boxL) &  *(boxL)  &  *(boxL) 
\end{ytableau} }\circ
\frac{ \begin{ytableau}
 \\
   &  
\end{ytableau}\,\,\begin{ytableau}
  \\
  &  
\end{ytableau} }{ \begin{ytableau}
\\
  &  \\
 &   & 
\end{ytableau} },
\end{equation}
for $(r_1,r_2) \in \BZ_{\geq 0}^2$.

\end{example}

%\subsection{Full Family}

Combining both windowing and factorization we arrive at the following 7-parameter families of $c=2$ triples of partitions, all which reduce to $\JackjLR_{21,21}^{321}$,

\ytableausetup{boxsize=1.1em}
\begin{equation}\label{eq:7paramwindowronly1}
 \windfact_{R_{12|3}} = \frac{\begin{ytableau}
\none[m_2\,]  &*(boxU)  & \none[1]  \\
\none   &*(boxS) a_1 &*(boxL) \\
\none[r_1]  &*(boxL)  & *(boxL) \\
\none[m_1\,]   &*(boxU)  & *(boxG)  & *(boxU) & \none[2] \\
\none    &*(boxS) a_2 &*(boxL)  &*(boxS)  a_3& *(boxL)   \\
\none[r_2] &*(boxL) & *(boxL)   &*(boxL)  & *(boxL)  \\
\none  &\none & \none[n_1]  & \none & \none[n_2]
\end{ytableau} \,\, \begin{ytableau}
\none \\
\none[m_3\,] & *(boxU) & \none[3] \\
\none & *(boxS) b_1  \\
\none[r_1] & *(boxL)  \\
\none[r_2] & *(boxL)  \\
\none &  *(boxS) b_2 & *(boxS) b_3  \\
\none & \none  & \none 
\end{ytableau}
}{\begin{ytableau}
\none & *(boxS) c_1\\
\none[m_3\,] & *(boxU) & \none[3] \\
\none[m_2\,] & *(boxU)  & \none[1]  \\
\none[r_1] & *(boxL) & *(boxL) & *(boxL) \\
\none & *(boxS) c_2 &*(boxL)  & *(boxS) c_3  \\
\none[m_1\,] & *(boxU) & *(boxG) &*(boxU) &\none[2]   \\
\none[r_2] & *(boxL)&*(boxL)  &*(boxL) & *(boxL)   & *(boxL)  \\
\none & *(boxS) c_4&*(boxL)  &*(boxS)  c_5 & *(boxL)  & *(boxS) c_6 \\
\none & \none  & \none[n_1] & \none  & \none[n_2] & \none
\end{ytableau}}\end{equation}
%For which we have
%\[ h^\sU_\lambda(c_4)+h^\sU_\lambda(c_3)=h^\sU_\lambda(c_2)+h^\sU_\lambda(c_5) \]
%\[ h^\sU_\lambda(c_4)-h^\sU_\lambda(c_5)=h^\sL_\mu(a_1)+h^\sU_\nu(b_1) \]
%\[ h^\sU_\lambda(c_4)-h^\sU_\lambda(c_6)=h^\sU_\mu(a_2)+h^\sL_\nu(b_1) \]
%\[ h^\sU_\lambda(c_4)+h^\sU_\lambda(c_3)=h^\sU_\mu(a_2)+h^\sU_\nu(b_2) \]
%\[ h^\sU_\lambda(c_2)+h^\sU_\lambda(c_6)=h^\sU_\mu(a_1)+h^\sU_\nu(b_2) \]
%\[ h^\sU_\lambda(c_5)-h^\sU_\lambda(c_3)-h^\sU_\lambda(c_6)-h^\sL_\mu(a_3)=0 \]

%\begin{equation}  \windfact_{R_{3|21}}  = \frac{\begin{ytableau}
% \none \\
%\none[m_1\,\,]  &*(boxU) & \none[1] \\
%\none &*(boxS) a_1  \\
%\none[r_1] &*(boxL)  &  \none & \none[2] \\
%\none  &*(boxS) a_2    &*(boxS)  a_3 & *(boxL)   \\
%\none[r_2]  &*(boxL)  &*(boxL)  & *(boxL)  \\
%\none  &\none   & \none & \none[n_2]
%\end{ytableau} \,\, \begin{ytableau}
%\none \\
%\none[m_3\,\,] & *(boxU)& \none[3]  \\
%\none & *(boxS) b_1  &*(boxL)  \\
%\none[r_1] & *(boxL) &*(boxL)  \\
%\none[r_2] & *(boxL) &*(boxL)    \\
%\none &  *(boxS) b_2 &*(boxL)  & *(boxS) b_3  \\
%\none & \none  & \none[n_3] & \none 
%\end{ytableau}
%}{\begin{ytableau}
%\none & *(boxS) c_1 \\
%\none[m_1\,\,] & *(boxU) & \none[1]  \\
%\none[m_3\,\,] & *(boxU)& \none[3]  \\
%\none[r_1]  & *(boxL) & *(boxL) & *(boxL) \\
%\none & *(boxS) c_2 &*(boxL) & *(boxS) c_3 & \none[2]   \\
%\none[r_2] & *(boxL) &*(boxL)  &*(boxL) & *(boxL)   & *(boxL)  \\
%\none & *(boxS) c_4 &*(boxL)  &*(boxS) c_5 & *(boxL)  & *(boxS) c_6 \\
%\none & \none  & \none[n_3] & \none  & \none[n_2] & \none
%\end{ytableau}}
%\end{equation}
We also include the extra $r_2=0$ cases

\begin{equation}\label{eq:7paramwindowronly2}
 \windfact_{R_{12|34}} = \frac{\begin{ytableau}
\none[m_1\,]  &*(boxU)  & \none[1]  \\
\none   &*(boxS) a_1 &*(boxL) \\
\none[r_1]  &*(boxL)  & *(boxL) \\
\none[m_2\,]   &*(boxU)  & *(boxG)  & *(boxU) & \none[2] \\
\none    &*(boxS) a_2 &*(boxL)  &*(boxS)  a_3& *(boxL)   \\
\none  &\none & \none[n_1]  & \none & \none[n_2] 
\end{ytableau} \,\, \begin{ytableau}
\none \\
\none[m_3\,\,] & *(boxU) & \none[3] \\
\none & *(boxS) b_1  \\
\none[r_1] & *(boxL)  & \none & \none[4]  \\
\none &  *(boxS) b_2  & *(boxS) b_3 & *(boxL) \\
\none & \none   & \none & \none[n_4]
\end{ytableau}
}{\begin{ytableau}
\none & *(boxS) c_1\\
\none[m_3\,\,] & *(boxU)& \none[3] \\
\none[m_1\,] & *(boxU)  & \none[1]  \\
\none[r_1] & *(boxL) & *(boxL) & *(boxL) \\
\none & *(boxS) c_2 &*(boxL)  & *(boxS) c_3  \\
\none[m_2\,\,] & *(boxU) & *(boxG) &*(boxU) &\none[2]&\none[4]   \\
\none & *(boxS) c_4&*(boxL)  &*(boxS)  c_5 & *(boxL)  &  *(boxL)  & *(boxS) c_6 \\
\none & \none  & \none[n_1] & \none  & \none[n_2] & \none[n_4] & \none 
\end{ytableau}}
\end{equation}

We conclude this work with a extension of our main conjecture \eqref{conj:generalsolution}.

\begin{conjecture}
The Rule $\JackjLR_{21,21}^{321}$ given by \eqref{eq:generalsolution} solves the  families \eqref{eq:7paramwindowronly1} and \eqref{eq:7paramwindowronly2}.
\end{conjecture}

%\[ h^\sU_\lambda(c_4)-h^\sU_\lambda(c_2)-h^\sL_\lambda(c_6)+h^\sU_\lambda(c_1)= h^\sU_\mu(a_3)+h^\sU_\nu(b_3) \] 
%\[ h^\sU_\lambda(c_4)-h^\sU_\lambda(c_2)-h^\sU_\mu(a_3)=h^\sL_\lambda(c_6)h^\sU_\nu(b_3)/h^\sU_\lambda(c_1)  \] 
%and

The extra equations provided by the requirement of solving these factorized families lead to further constraints on the solution \eqref{eq:generalsolution}. The existence of this solution is the strongest evidence we present for the validity of the windowing \eqref{conj:jackwindowing} and factorization \eqref{conj:factorization} conjectures. We will explore the properties of this solution and provide a direct construction of it (i.e. non-computational) in a follow up work \cite{Mickler:2025}.

%The kernel of the linear system is vastly reduced.

\section{Acknowledgements}
The author would like to thank Per Alexandersson and Arun Ram for helpful conversations during the development of this work.

\bibliographystyle{hep}
\bibliography{/Users/ryanmickler/Dropbox/Kennebunk/Archive/MasterArchive}
%\bibliography{./StanleyConjectureFinal}
\end{document}